\newtheorem*{theorem*}{Theorem}
\newtheorem*{remark*}{Remark}
\newtheorem{lemma}{Lemma}[section]
\newtheorem{remark}[lemma]{Remark}
\newtheorem{example}[lemma]{Example}
\newtheorem{theorem}[lemma]{Theorem}
\newtheorem{ques}[lemma]{Question}
\newtheorem*{conjecture*}{Conjecture}
\newtheorem{thm}[lemma]{Theorem}
\newtheorem{prop}[lemma]{Proposition}
\newtheorem{lem}[lemma]{Lemma}
\newtheorem{defn}[lemma]{Definition}
\newtheorem{notn}[lemma]{Notation}
\newtheorem{cor}[lemma]{Corollary}
\newtheorem{exm}[lemma]{Example}
\newtheorem{rem}[lemma]{Remark}
\newtheorem{introtheorem}{Theorem}
\newtheorem{introcorollary}[introtheorem]{Corollary}
\newtheorem{introprop}[introtheorem]{Proposition}
\sloppy \theoremstyle{plain}
\newcommand{\Exp}{\operatorname{Exp}}
\newcommand{\bfO}{{\bf O}}
\newcommand{\cC}{\mathcal{C}}
\newcommand{\Hom}{\operatorname{Hom}}
\newcommand{\A}{\mathbb{A}}
\newcommand{\Ind}{\operatorname{Ind}}
\newcommand{\re}{\operatorname{Re}}
\renewcommand{\Im}{\operatorname{Im}}
\newcommand{\Ker}{\operatorname{Ker}}
\newcommand{\R}{{\mathbb R}}
\newcommand{\C}{{\mathbb C}}
\newcommand{\bC}{{\mathbb C}}
\newcommand{\cW}{{\mathcal W}}
\newcommand{\alp}{{\alpha}}
\newcommand{\lam}{{\lambda}}
\newcommand{\Fre}{{Fr\'{e}chet}\,}
\newcommand{\g}{{\mathfrak{g}}}
\newcommand{\Supp}{\mathrm{Supp}}
\newcommand{\cO}{{\mathcal{O}}}
\newcommand{\GL}{\operatorname{GL}}
\newcommand{\Id}{\operatorname{Id}}
\newcommand{\Sc}{{\mathcal S}}
\newcommand{\into}{\hookrightarrow}
\newcommand{\onto}{\twoheadrightarrow}
\newcommand{\hot}{\,\widehat{\otimes}\,}
\newcommand{\cM}{\mathcal{M}}
\newcommand{\fg}{\mathfrak{g}}
\newcommand{\fr}{\mathfrak{r}}
\newcommand{\fp}{\mathfrak{p}}
\newcommand{\fq}{\mathfrak{q}}
\newcommand{\fn}{\mathfrak{n}}
\newcommand{\bP}{\mathbb{P}}
\newcommand{\X}{\mathbf{X}}
\newcommand{\Y}{\mathbf{Y}}
\newcommand{\bfQ}{\mathbf{Q}}
\newcommand{\bfS}{\mathbf{S}}
\newcommand{\W}{\mathbf{W}}
\newcommand{\G}{{\mathbf G}}
\newcommand{\bfH}{{\mathbf H}}
\newcommand{\bfZ}{{\mathbf Z}}
\newcommand{\Dima}[1]{{{#1}}}
\newcommand{\DimaA}[1]{{{#1}}}
\newcommand{\DimaB}[1]{{{#1}}}
\newcommand{\DimaC}[1]{{{#1}}}
\newcommand{\DimaD}[1]{{{#1}}}
\newcommand{\DimaE}[1]{{{#1}}}
\newcommand{\DimaF}[1]{{{#1}}}
\newcommand{\DimaH}[1]{{{#1}}}
\newcommand{\DimaI}[1]{{{#1}}}
\newcommand{\DimaJ}[1]{{{#1}}}
\newcommand{\DimaK}[1]{{{#1}}}
\newcommand{\DimaL}[1]{{{#1}}}
\newcommand{\DimaM}[1]{{{#1}}}
\newcommand{\DimaN}[1]{{{#1}}}
\begin{document}

\author{Dmitry Gourevitch}
\address{ The Incumbent of Dr. A. Edward Friedmann Career Development Chair in Mathematics, Faculty of Mathematics and Computer Science,
Weizmann Institute of Science, 234 Hertzl St,  Rehovot 7610001, Israel }
\email{dimagur@weizmann.ac.il}
\urladdr{http://www.wisdom.weizmann.ac.il/~dimagur}

 \author{Siddhartha Sahi}
 \address{
 Department of Mathematics,
 Rutgers University,
 Hill Center - Busch Campus,
 110 Frelinghuysen Road
 Piscataway, NJ 08854-8019, USA}
 \email{sahi@math.rugers.edu}

 \author{Eitan Sayag}
 \address{
 Department of Mathematics,
Ben Gurion University of the Negev,
P.O.B. 653,
Be'er Sheva 84105,
ISRAEL}
 \email{eitan.sayag@gmail.com}

\date{\today}
\title{Analytic continuation of equivariant distributions}
\keywords{Invariant functional, holonomic D-module, spherical pair, local field, linear algebraic group, degenerate principal series, generalized Whittaker functional}
%
%
%
%
%
%
%
%
%
\subjclass[2010]{20G05,20G20,20G25,22E45,46T30}
\begin{abstract}
We establish a method for constructing equivariant distributions on smooth real algebraic  varieties from equivariant distributions on Zariski open subsets. This is based on Bernstein's theory of analytic continuation of holonomic distributions. We use this to construct $H$-equivariant functionals on principal series representations of $G$, where $G$ is a real reductive group and $H$ is an algebraic subgroup. We  also deduce the existence of generalized Whittaker models for degenerate principal series representations.
As a special case, this gives short proofs of existence of Whittaker models on principal series representations, and of analytic continuation of standard intertwining operators.  Finally, we extend our constructions to the $p$-adic case using a recent result of Hong and Sun.
\end{abstract}

\maketitle

\section{Introduction}

The utility of the theory of distributions in representation theory and harmonic analysis is well established since the foundational works of Bruhat and Harish-Chandra. In particular, invariant distributions provide a basic tool to study linear functionals on induced representations, maps between induced representations, and characters of infinite dimensional representations. In many of these applications, the representation theoretic question is translated to a question of existence of certain equivariant distributions on a homogeneous space, usually obtained as points over a local field of an algebraic variety. In some cases one can guess the restriction of the equivariant distribution to an open subset and would like to extend it to the entire space. Although such an extension problem is too general to decide we have found that in many interesting cases, the mere existence of an equivariant distribution on a subvariety implies the existence of such an equivariant  distribution on the entire space.

To illustrate our idea \Dima{let us provide a short proof for the existence of Whittaker models for the principal series representations of $G:=\GL(2,\R)$. Let $B\subset G$ be the Borel subgroup consisting of upper-triangular matrices, $N\subset B$ be the unipotent radical and $S:=B\times N$. Let $\chi$ be a character of $S$ with non-trivial unitary restriction to $\{\Id\}\times N$, \DimaK{and let $\pi$ be the corresponding principal series representation}. Then the existence of Whittaker models \DimaK{for $\pi$} is equivalent to the existence of non-zero distributions on $G$ that are $(S,\chi)$-equivariant, where the action of $S$ is given by left multiplication by $B$ and right multiplication by $N$.
Note that the open Bruhat cell has a (non-zero) $(S,\chi)$-equivariant smooth measure $\xi$. Let $p$ be the \DimaK{non-negative} polynomial on $G$ given by $p(g):=(g_{21})^2$.
Then $p$ is $(S,\psi)$-equivariant where $\psi(b,n)=b_{11}^{-2}b_{22}^2$. Note that $p$ vanishes on $B$ and thus \DimaD{there exists a power $n\geq 0$ such that } $p^n\xi$ extends to an $(S,\psi^n\chi)$-equivariant distribution $\eta$ on $G$. To obtain an $(S,\chi)$-equivariant distribution we consider the family $p^{\lambda-n}\eta$ defined for $\re \lambda$ large enough. By \cite{Ber} (see \S \ref{sec:Dmod} below) this family has a meromorphic continuation to the entire complex plane. The leading coefficient  \DimaK{of the Laurent expansion of this family at zero (see Definition \ref{def:mer} below)} is a non-zero $(S,\chi)$-equivariant distribution on $G$.

More generally, let a linear algebraic group $\bfS$ act on a smooth affine algebraic variety $\X$, both defined over $\R$.} Let $q$ be  a  polynomial on $\X$ transforming under the action of $\bfS$ by some character $\psi$. Define a polynomial $\bar q$ by $\bar q(x):=\overline{q(\bar x)}$. Let $p:=q\bar q$ and let $\Y:=\X_p$ be the basic open subset defined by $p$. Let $X,Y$ and $S$ denote the real points of $\X,\Y$ and $\bfS$.
Let $\xi$ be an $(S,\chi)$-equivariant tempered distribution on $Y$, {\it i.e.} a continuous functional on the space of Schwartz functions on $Y$ (see \S \ref{subsec:Sch} below).
Since $p$ is positive on $Y$, we can consider the product $p^{\lambda} \xi$ as a tempered distribution on $Y$, for any $\lam\in \C$.  Since $p$ vanishes on the complement of $Y$, for $\re \lam $ large enough the new distributions $p^{\lambda} \xi$ naturally extend to the entire space $X$. If $\xi$ generates a holonomic $D(\X)$-module (see \S \ref{sec:Dmod} below for this notion) then \cite{Ber} implies that this family of distributions on $X$ has a meromorphic continuation to the entire complex plane. The obtained family $\eta_{\lam}$ is $(S,\chi |\psi|^{2\lam})$-equivariant.  This implies that the leading coefficient of the family $\eta_{\lam}$ at $0$ is $(S,\chi)$-equivariant, and the constant term $\eta_0$ is \DimaK{(see Definition \ref{def:mer} below)} generalized $(S,\chi)$-equivariant. In addition we have $\eta_0|_Y=\xi$.
For the detailed proof see Lemma \ref{lem:key} below.

Let us provide a class of examples in which for given $\bfS,\X$ and $\Y$, an equivariant polynomial $q$ describing $\Y$ always exists. Namely, we require $\bfS$ to be solvable and assume that it has an  open orbit $\bfO$ on $\X$. Then the Lie-Kolchin theorem implies that the ideal of polynomials vanishing on the complement to $\Y:=\bfO$ has a non-zero $\bfS$-equivariant element $q$.
Note that in this case, the equivariant distribution $\xi$ on $\bfO$ is necessarily a measure.

We show that it suffices to assume the existence of a measure on an arbitrary orbit, not necessary open. \DimaA{We also generalize the argument to quasi-\DimaK{projective} varieties.} Since many invariant distributions arising in representation theory are measures on orbits, we obtain numerous applications, recasting many well known results under one roof.


\begin{introtheorem}\label{thm:A}
Let a \DimaF{Zariski} \DimaD{connected}
solvable linear algebraic group $\bfS$ act on a smooth \DimaA{quasi-\DimaK{projective}} algebraic variety $\X$, both defined over $\R$. Let $S$ and $X$ denote the real points of $\bfS$ and $\X$, and let $\chi$ be a (continuous) character of $S$.
Suppose that some $S$-orbit $O\subset X$  admits a non-zero $(S,\chi)$-equivariant tempered measure.


Then there exists a non-zero $(S,\chi)$-equivariant tempered distribution on $X$.
\end{introtheorem}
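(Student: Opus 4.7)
The plan is to combine equivariant resolution of singularities with the open-orbit argument sketched in the introduction, and then push forward to $X$. This simultaneously addresses both generalizations the theorem requires: from open orbits to arbitrary orbits, and from affine to quasi-projective varieties.

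\emph{Step 1 (reduction to an open orbit).} Let $\bfO\subset\X$ be the algebraic $\bfS$-orbit of a point of $O$, and let $Z:=\overline{\bfO}$ be its Zariski closure in $\X$; this is a closed, irreducible, $\bfS$-invariant subvariety, generally singular along $Z\setminus\bfO$. Since $O$ is open in $\bfO(\R)$ (all $S$-orbits in $\bfO(\R)$ have the same real dimension), extending the given measure by zero gives an $(S,\chi)$-equivariant tempered measure on $\bfO(\R)$ supported on $O$. Applying equivariant resolution of singularities to $Z$ then produces a smooth quasi-projective $\bfS$-variety $\widetilde{Z}$ together with a proper birational $\bfS$-equivariant morphism $\pi\colon\widetilde{Z}\to Z$ which is an isomorphism over the smooth locus of $Z$, and in particular over $\bfO$. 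Setting $\widetilde{\bfO}:=\pi^{-1}(\bfO)$, one obtains an \emph{open} $\bfS$-orbit $\widetilde{\bfO}\subset\widetilde{Z}$, and the measure transports to an $(S,\chi)$-equivariant tempered measure $\widetilde{\xi}$ on $\widetilde{Z}(\R)$.

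\emph{Step 2 (open orbit on a smooth quasi-projective variety).} Here I would adapt the Lie--Kolchin plus Bernstein argument of the introduction. Since $\widetilde{Z}$ is smooth and quasi-projective and $\bfS$ is connected, one produces an $\bfS$-equivariant ample line bundle $\mathcal{L}$ on $\widetilde{Z}$ (passing to a tensor power if necessary, via Sumihiro-type equivariant line-bundle theorems). For $k\gg 0$, global sections of $\mathcal{L}^{\otimes k}$ vanishing on $\widetilde{Z}\setminus\widetilde{\bfO}$ form a nonzero finite-dimensional $\bfS$-subrepresentation, so the Lie--Kolchin theorem yields a semi-invariant section $q$ whose zero locus contains $\widetilde{Z}\setminus\widetilde{\bfO}$. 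Setting $p:=q\bar q$, one gets a positive function on $\widetilde{\bfO}(\R)$ transforming under $\bfS$ by a positive character (interpreted locally in trivializations of $\mathcal{L}^{\otimes k}$). Bernstein's theorem meromorphically continues the family $p^{\lambda}\widetilde{\xi}$ to all $\lambda\in\C$, and the leading Laurent coefficient at $\lambda=0$ is an $(S,\chi)$-equivariant tempered distribution $\widetilde{\eta}$ on $\widetilde{Z}(\R)$ extending $\widetilde{\xi}$.

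\emph{Step 3 (push forward and non-vanishing).} The morphism $\widetilde{Z}\to Z\hookrightarrow\X$ is proper as a composition of a proper morphism and a closed immersion, so pushforward along it sends tempered distributions to tempered distributions. This yields an $(S,\chi)$-equivariant tempered distribution $\eta$ on $X$; since $\pi$ is an isomorphism over $\bfO$, the restriction $\eta|_O$ equals the original measure, whence $\eta\neq 0$.

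\emph{Main obstacle.} The heart of the argument is Step 2: making global sense of $|q|^{2\lambda}$ as a density on the non-affine variety $\widetilde{Z}$. The natural workaround is to exploit the local nature of Bernstein's meromorphic continuation: run the affine argument on each affine chart trivializing $\mathcal{L}^{\otimes k}$ and glue via uniqueness of Laurent coefficients in the Zariski topology. A secondary subtlety is ensuring that the $\bfS$-equivariant structure on $\mathcal{L}$ actually exists, which on a smooth quasi-projective $\bfS$-variety with $\bfS$ connected follows from standard equivariant line-bundle arguments (possibly after replacing $\bfS$ by a finite central cover to kill obstructions).
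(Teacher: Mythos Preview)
Your strategy via equivariant resolution is quite different from the paper's, which avoids resolution entirely: for a non-open orbit $O$ with Zariski closure $Z$, the paper works on the open set $U:=(X\setminus Z)\cup O$ in which $O$ is \emph{closed}, extends the measure by zero to $U$, and applies the key Lemma~\ref{lem:key} with a polynomial $p$ on $\X$ lifting an $\bfS$-equivariant function $q$ on $Z$ (even though $p$ itself need not be equivariant, $\xi\,|p|^{\lambda}$ is, since $\Supp\xi\subset Z$). The quasi-projective case is then reduced to the quasi-affine one by passing to the affine cone over an equivariant projective embedding (Proposition~\ref{prop:QP}, Lemma~\ref{lem:PrFrob}) and enlarging the acting group by a $\mathrm{GL}_1$-factor; this dissolves your ``main obstacle'' of interpreting $|q|^{2\lambda}$ for a line-bundle section, which your gluing sketch does not actually resolve (the local functions $|q_i|^{2\lambda}$ do not patch).

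There is also a genuine gap in the non-vanishing argument. In Step~2 you claim that the leading Laurent coefficient $\widetilde\eta$ at $\lambda=0$ is $(S,\chi)$-equivariant \emph{and} extends $\widetilde\xi$; these two properties are in general incompatible. If the family has a pole at $0$, the leading coefficient is equivariant (Lemma~\ref{lem:terms}) but is supported in $\widetilde Z(\R)\setminus\widetilde{\bfO}(\R)$ (the family being analytic on the open orbit), whereas the constant term extends $\widetilde\xi$ but is only \emph{generalized} equivariant---this is precisely the dichotomy of Example~\ref{ex:R}. Your Step~3 then fails: a nonzero equivariant $\widetilde\eta$ supported on the exceptional locus can have vanishing pushforward to $X$, since nothing prevents cancellation along the fibres of $\pi$. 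The repair is to push the entire meromorphic family forward to $X$ first and only then take the leading coefficient: the pushed-forward family is not identically zero (its constant term restricts, on any open $V\subset X$ with $V\cap Z\subset O$, to the original measure extended by zero), so its leading coefficient at $0$ is a nonzero $(S,\chi)$-equivariant tempered distribution on $X$.
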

Here, a measure is called tempered if it defines a tempered distribution. Note that an $(S,\chi)$-equivariant measure on $O$ is unique up to a multiplicative constant, and it is tempered if and only if the restriction of $\chi$ to the unipotent radical of  $S$ is unitary. See  \S \ref{subsec:Sch} below for more details.

\begin{exm}\label{ex:R}
Let $S:=\R^{\times}$, \DimaK{acting by multiplication on} $X:=\R$ and $O:=\R^{\times}$. Then $O$ carries an $S$-invariant measure $\mu=dx/|x|$, which extends to a generalized-invariant distribution $\xi$ on $X$ given by
$$\xi(f):=\int_{-\infty}^{-1}f(x)|x|^{-1}dx+\int_{-1}^1 (f(x)-f(0))|x|^{-1}dx+ \int_{1}^{\infty}f(x)|x|^{-1}dx.$$
However, $\xi$ is not $S$-invariant. To obtain an $S$-invariant distribution on $X$, we derive $\xi$ and get the delta-distribution  $\delta_0$. This shows that we can either obtain a generalized-invariant extension of the original distribution, or an invariant distribution, but (in general) not an invariant extension.
\end{exm}


To demonstrate the power of Theorem \ref{thm:A},
let $G$ be a quasi-split real reductive group, $B$ be a Borel subgroup, $N$ be the unipotent radical of $B$, $\chi_1$ be a character of $B$ and $\chi_2$ be a non-degenerate unitary character of $N$. Take $S:=B\times N$, consider the two-sided action of $S$ on $G$, let $X:=G$ and $\chi:=\chi_1\otimes \chi_2$. Then Theorem \ref{thm:A} implies the existence of Whittaker models for the principal series representations of $G$.

We prove a generalization of Theorem \ref{thm:A} in \S \ref{sec:main} below, \DimaB{see Theorem \ref{thm:GenA0}}. This generalization allows $S$ to be a \DimaJ{Zariski connected} extension of a compact group by a solvable one. An example of such $S$ is any \DimaJ{Zariski connected} algebraic subgroup of a minimal parabolic subgroup of a real reductive group. \DimaA{We also replace the character $\chi$ by a \DimaK{finite-dimensional} representation of moderate growth \DimaB{(see \S \ref{subsec:ModGr} below).}}


We also extend the result to p-adic local fields, using \cite{HS}.
\DimaC{However, \cite{HS} puts an extra assumption on the representations. In order to shorten the formulations of our main results we introduce the following ad-hoc notation.
\begin{notn}\label{not:C}
If $\bfS$ is defined over an Archimedean field then we denote by
$\cC^{\mathrm{fin}}(S)$ the category of \DimaK{finite-dimensional} representations of moderate growth.  If $\bfS$ is defined over a p-adic local field then we denote by $\cC^{\mathrm{fin}}(S)$ the category of \DimaK{finite-dimensional}  smooth representations that are trivial on the unipotent radical of $S$.
\end{notn}}

\DimaC{In the subsequent discussion we let $F$ be an arbitrary local field of characteristic zero, Archimedean or not.}
Another natural way to generalize Theorem \ref{thm:A} is to consider also generalized sections of bundles on $X$. We do that for the case when $\mathbf{X}$ is a transitive space of a  group  $\mathbf{G}$ that includes $\mathbf{S}$. More specifically, let $F$ be a local field of characteristic zero, and let $\mathbf{G}$ be a \DimaJ{Zariski connected} linear algebraic $F$-group. Let $\mathbf{P}_0\subset \G$ be a minimal parabolic $F$-subgroup. \DimaD{Let $\bfH\subset \G$ and $\bfS\subset \mathbf{P}_0$ be $F$-subgroups and assume that $\bf S$ is Zariski connected}.  Let $H,S,G$ be the groups of $F$-points of $\bfH,\bfS$, and $\G$.
Consider the action of $\bfS\times \bfH$ on $G$ given by left multiplication by $\bfS$ and right multiplication by $\bfH$.
\DimaA{Let $(\sigma,V)\in \cC^{\mathrm{fin}}(S\times H)$.}

\begin{introtheorem}\label{thm:B}

Suppose that some double coset $SgH\subset G$ admits a non-zero tempered $S\times H$-equivariant $\DimaD{V^*}$-valued measure.
Then there exists a non-zero $S\times H$ -equivariant $\DimaD{V^*}$-valued tempered distribution on $G$.
\end{introtheorem}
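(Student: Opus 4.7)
My plan is to reduce the two-sided problem on $G$ to a one-sided problem on the homogeneous space $\X:=\G/\bfH$, then invoke the generalized form of Theorem~A (Theorem~\ref{thm:GenA0}) extended from scalar characters to distributional sections of a finite-dimensional equivariant vector bundle. The key reformulation identifies $(S\times H,\sigma)$-equivariant $V^*$-valued tempered distributions on $G$ with $(S,\sigma|_S)$-equivariant tempered distributional sections of the $\G$-equivariant vector bundle $\mathcal{V}$ on $\X$ associated to the $\bfH$-representation $V^*$ via $\sigma|_{\bfH}$ (possibly up to a modular character correction). Under this bijection the given equivariant $V^*$-valued measure on the double coset $SgH$ becomes an equivariant $\mathcal{V}$-valued measure on the single $S$-orbit $S\cdot gH\subset X$. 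Crucially, $\bfS$ is Zariski connected and contained in a minimal parabolic $\mathbf{P}_0$, hence an extension of a compact group by a solvable one in the Archimedean case (and in the $p$-adic case the analogous hypothesis is already built into $\cC^{\mathrm{fin}}$), placing us in the framework of Theorem~\ref{thm:GenA0}.

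Next I would run the analytic continuation scheme of Theorem~A in the bundle setting. By Lie--Kolchin applied to the solvable radical of $\bfS$, choose a Zariski open $\bfS$-invariant neighborhood $\mathbf{Y}\subset\X$ of the orbit and an $\bfS$-semi-invariant polynomial $p$ on $\X$ whose character has the form $|\psi|^2$ and whose zero locus equals $\X\setminus\mathbf{Y}$. The vector-valued generalization of Theorem~A applied to the $\bfS$-action on $\mathbf{Y}$ yields a nonzero equivariant $\mathcal{V}$-valued distributional section $\xi$ on $\mathbf{Y}$ extending the orbit measure. The family $p^\lambda\xi$ is then a $\mathcal{V}$-valued tempered distributional section on $\mathbf{Y}$ that extends by zero to $X$ for $\re\lambda\gg 0$; Bernstein's theorem \cite{Ber}, applied componentwise in a local trivialization of $\mathcal{V}$, produces a meromorphic continuation governed by a single Bernstein--Sato polynomial depending only on $p$. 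The leading Laurent coefficient at $\lambda=0$ is the desired nonzero equivariant $\mathcal{V}$-valued distributional section on $X$, which translates back to the required $V^*$-valued distribution on $G$.

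The main obstacle is justifying the Bernstein meromorphic continuation for the $\mathcal{V}$-valued family $p^\lambda\xi$, which boils down to establishing that $\xi$ generates a holonomic $D_{\X}$-module with coefficients in $\mathcal{V}$. Since the $S$-orbit $S\cdot gH$ is smooth and locally closed and $V^*$ is finite-dimensional, $S$-equivariance forces $\xi$ along the orbit to be determined by its fiber at a single point (a finite-dimensional datum), yielding an annihilator ideal of codimension $\dim\X$ and hence holonomicity. In the $p$-adic case Bernstein's theorem is replaced by the Hong--Sun result \cite{HS}, whose applicability requires precisely the triviality on the unipotent radical built into the $p$-adic definition of $\cC^{\mathrm{fin}}$.
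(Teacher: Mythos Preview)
Your reduction to $\X=\G/\bfH$ is natural, but it hides two genuine gaps rather than removing them.

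First, Theorem~\ref{thm:GenA0} is proved for $V^*$-valued distributions, i.e.\ for the \emph{trivial} bundle $\X\times V^*$, not for sections of a general $\G$-equivariant bundle $\mathcal{V}=\G\times_{\bfH}V^*$. This bundle is typically nontrivial (take $\bfH$ parabolic and $\sigma|_H$ a nontrivial character), and your ``componentwise in a local trivialization'' does not give a global meromorphic continuation with the required equivariance, since the transition functions mix components. Unwinding what the bundle extension would mean, sections of $\mathcal{V}$ on $\X$ are exactly $H$-equivariant $V^*$-valued distributions on $G$; so the ``bundle version of Theorem~\ref{thm:GenA0}'' you invoke is precisely Theorem~\ref{thm:B} for the non-quasi-elementary group $\bfS\times\bfH$, which is the statement to be proved.

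Second, your Lie--Kolchin step asks for an $\bfS$-semi-invariant regular function $p$ on $\X=\G/\bfH$ cutting out the complement of a neighborhood of the orbit. But $\X$ is in general only quasi-projective; when $\bfH$ is parabolic, $\X$ is projective and $\cO(\X)$ consists of constants, so no such $p$ exists. Proposition~\ref{prop:KK} and Corollary~\ref{cor:pExists} require a quasi-affine ambient variety. The paper avoids both problems by working directly on the affine variety $\G$ with the two-sided $\bfS\times\bfH$-action: the key input is Corollary~\ref{cor:pQH}, which uses Chevalley's theorem to realize $\G/\bfH$ as an orbit of a line in a representation $\W$, finds a $\bfQ$-semi-invariant polynomial there (needing only that $\bfS$ is quasi-elementary, not $\bfH$), and pulls it back to an $\bfS\times\bfH$-equivariant polynomial on $\G$. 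With that polynomial in hand, Lemma~\ref{lem:key} applies on $G$ with the trivial bundle, and no bundle-valued generalization is needed.
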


 In \S \ref{sec:main} below we prove a generalization of Theorem \ref{thm:B} that bounds \DimaK{the dimension of the space of $S\times H$ -equivariant $\DimaD{V^*}$-valued tempered distributions} from below by the number of $(S\times H,V^{*})$-measurable double cosets that lie in the same $\bfS\times \bfH$-double coset (see Theorem \ref{thm:GenB}).  Our method provides no upper bounds on \DimaK{this dimension}.


Taking $\bfS=\bfH =\mathbf{P}_0$ \DimaB{in Theorem \ref{thm:B}} we obtain \DimaC{an alternative proof of} the existence of Knapp-Stein intertwining operators \cite{KnSt}. More generally, by taking  $\bfS=\mathbf{P}_0$ and $\bfH$ arbitrary, Theorem \ref{thm:B} can be used to construct $H$-invariant functionals on principal series representations of $G$.

Namely, assume that $G$ is reductive \DimaD{and Zariski connected,}  \DimaA{let $\sigma\in \cC^{\mathrm{fin}}(P_0)$} and denote by $\Ind_{P_0}^G(\sigma)$  the smooth induction. For any $g\in G$ let $I_g$ denote the group $g^{-1}Hg\cap P_0$, and let $\Delta_{I_g}$ and $\Delta_H$ denote the modular functions of $I_g$ and $H$.
Define a character $\chi_g$ of $I_g$ by $\chi_g(x):=\Delta_{I_g}(x) \Delta_H^{-1}(gxg^{-1})$.

\begin{introcorollary}[\DimaC{See \S \ref{sec:main}}]\label{cor:Prin}
Suppose that for some $g\in G$, $\sigma$ has a \DimaD{non-zero continuous linear} \Dima{functional} that changes under the action of $I_g$ by the character $\chi_g$. Then there exists an $H$-invariant  \DimaD{non-zero continuous linear} functional on $\Ind_{P_0}^G(\sigma)$.
\end{introcorollary}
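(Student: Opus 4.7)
The plan is to reduce Corollary \ref{cor:Prin} directly to Theorem \ref{thm:B}. The first step is to reformulate the existence of an $H$-invariant functional on $\Ind_{P_0}^G(\sigma)$ in terms of equivariant distributions on $G$. By a standard Frobenius-type reciprocity, valid in both the Archimedean and $p$-adic settings, continuous $H$-invariant linear functionals on $\Ind_{P_0}^G(\sigma)$ are in natural bijection with $V_\sigma^*$-valued tempered distributions $\xi$ on $G$ that transform on the left under $P_0$ by a twist of $\sigma^*$ involving $\Delta_{P_0}$ (coming from the chosen normalization of smooth induction) and on the right under $H$ by a twist involving $\Delta_H$. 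Packaged in the language of Theorem \ref{thm:B}, this says precisely that such functionals correspond to non-zero $(P_0\times H, W^*)$-equivariant tempered distributions on $G$, where $W \in \cC^{\mathrm{fin}}(P_0\times H)$ is the finite-dimensional representation built from $\sigma$ and the relevant modular characters.

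The second step is to apply Theorem \ref{thm:B} with $\bfS := \mathbf{P}_0$, $\bfH$ as given, and $V := W$. It then suffices to exhibit some double coset $P_0 g H \subset G$ that carries a non-zero $(P_0\times H)$-equivariant tempered $W^*$-valued measure. Writing this coset as the homogeneous space $(P_0\times H)/\stab(g)$ for the two-sided action, the stabilizer projects isomorphically onto $gHg^{-1}\cap P_0$, which after replacing $g$ by $g^{-1}$ (or equivalently, conjugating by $g$) is naturally identified with $I_g = g^{-1}Hg \cap P_0$.

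The third step invokes the classical criterion for the existence of a $W^*$-valued equivariant measure on a homogeneous space: such a measure exists if and only if $W^*$, restricted to $\stab(g)$ and transported to $I_g$ via the identification above, coincides with the ratio of modular characters of $P_0\times H$ and of $\stab(g)$. A direct computation of this ratio, using the definition of $W$ in terms of $\sigma$ and tracking the modular characters through every identification, reduces the condition to the requirement that $\sigma$ admits a continuous linear functional transforming under $I_g$ by the character $\Delta_{I_g}(x)\Delta_H^{-1}(gxg^{-1}) = \chi_g(x)$. This is exactly the hypothesis of the corollary, completing the reduction. Temperedness of the measure is automatic, since the relevant modular characters are unitary on the unipotent radical of $P_0$.

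The substantive analytic input is entirely encoded in Theorem \ref{thm:B}, whose proof is the heart of the paper. The main obstacle in writing the full proof of the corollary is therefore not analytic but combinatorial-bookkeeping: one must carry the modular characters, the induction normalizations, and the identification $\stab(g)\cong I_g$ through the argument consistently, so as to recover precisely the character $\chi_g$ as stated.
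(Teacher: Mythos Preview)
Your proposal is correct and follows essentially the same route as the paper: use Frobenius reciprocity (Lemma \ref{lem:Integ} in the paper) to identify $H$-invariant functionals on $\Ind_{P_0}^G(\sigma)$ with $P_0\times H$-equivariant distributions on $G$, then apply Theorem \ref{thm:B}, checking via the standard measure-on-homogeneous-space criterion (Lemma \ref{lem:Frob} in the paper) that the double coset $P_0gH$ carries the required equivariant tempered measure. The only cosmetic difference is that the paper reverses the order of presentation (first produces the measure, then invokes Theorem \ref{thm:B}, then identifies the resulting distribution with a functional), and it offloads the modular-character bookkeeping to the two cited lemmas rather than unpacking it explicitly as you do.
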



Another natural question that arises is whether one can extend distributions supported on an orbit, and not just measures defined on an orbit. While we do not have a general result in this direction, in Example \ref{exm:ExtDistQues}
below we show how Theorem \ref{thm:A} can  be used for this purpose.

\DimaB{Another question is whether one can prove an analog of Theorem \ref{thm:B} and Corollary \ref{cor:Prin} in which $P_0$ is replaced by a bigger parabolic subgroup. While we could not yet answer this question in general, \DimaK{we can do so for  $F$-spherical subgroups $H$ and certain  related parabolics. For this purpose we will need a notation.
\begin{notn}\label{not:CW}
For a linear algebraic $F$-group $P$ we set $\cM(P)$ to be the category of representations of $P$ that are trivial on the unipotent radical, and on the reductive quotient of $P$ are
\begin{enumerate}
\item finite-dimensional if $F$ is p-adic, and
\item smooth admissible finitely-generated \Fre representations of moderate growth (a.k.a. Casselman-Wallach representations) if $F$ is Archimedean.
\end{enumerate}
\end{notn}
}
\begin{introprop}[\DimaC{See \S \ref{sec:main}}]\label{prop:ParAdapt}
Let $\mathbf{G}$ be a \DimaD{Zariski connected} reductive algebraic $F$-group, $\mathbf P_0\subset \G$ be a minimal parabolic $F$-subgroup, and $\mathbf{H}$ be an \DimaK{(absolutely) spherical} $F$-subgroup such that $\mathbf{HP_0}$ is open in $\G$. Let  $\mathbf{P}\subset \G$ be a \DimaK{parabolic} $F$-subgroup such that $\mathbf{P_0\subset P}$ and $\mathbf{HP}=\mathbf{HP_0}$.
 Let $\sigma \in \DimaK{\cM}(P)$.
Let $C:=H\cap P$ and suppose that $\sigma$ admits a non-zero $(C,\Delta_C \Delta_H^{-1})$-equivariant continuous \DimaD{linear} functional.
Then there exists an $H$-invariant  continuous \DimaD{non-zero linear} functional on $\Ind_{P}^G(\sigma)$.
\end{introprop}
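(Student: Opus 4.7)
The strategy is to adapt the proof of Theorem \ref{thm:B} and Corollary \ref{cor:Prin}, reducing the problem from the (non-solvable) parabolic $\mathbf{P}$ to the minimal parabolic $\mathbf{P_0}$, using the hypothesis $\mathbf{HP}=\mathbf{HP_0}$. The latter is equivalent to the factorization $P=CP_0$, since any $p\in P\subset HP=HP_0$ can be written $p=h'p_0$ with $h'=pp_0^{-1}\in H\cap P=C$.

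First I would reformulate the problem distributionally: an $H$-invariant continuous linear functional on $\Ind_{P}^G(\sigma)$ is the same datum as a $(P\times H,\sigma\boxtimes\mathbf{1})$-equivariant tempered $\sigma^{*}$-valued distribution on $G$, where $P$ acts by left translation and $H$ by right translation (with the standard modular twist). Since $HP_0$ is open and $HP=HP_0$, the double coset $PeH=PH$ is open in $G$, and the stabilizer of $e$ in $P\times H$ is a twisted copy of $C$. The given $(C,\Delta_C\Delta_H^{-1})$-equivariant functional $\ell$ on $\sigma$ then yields, up to scalar, a unique non-zero $(P\times H,\sigma\boxtimes\mathbf{1})$-equivariant $\sigma^{*}$-valued smooth measure $\mu_0$ on this open double coset.

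Since $PH=P_0H$, the measure $\mu_0$ is also supported on the unique open $(P_0\times H)$-orbit, and is in particular $(P_0\times H,\sigma|_{P_0}\boxtimes\mathbf{1})$-equivariant. Viewed this way, it falls within the scope of Theorem \ref{thm:GenB} (the generalization of Theorem \ref{thm:B}) applied with $\mathbf{S}=\mathbf{P_0}$, which is Zariski connected and an extension of a compact group by a solvable one. That theorem, via the Bernstein meromorphic continuation of $|p|^{2\lambda}\mu_0$ for a semi-invariant polynomial $p$ vanishing on the complement of the open orbit, produces a non-zero $(P_0\times H)$-equivariant tempered extension $\Phi$ of $\mu_0$ to all of $G$.

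The main obstacle, which I expect to be the heart of the argument, is promoting the $(P_0\times H)$-equivariance of $\Phi$ to full $(P\times H)$-equivariance; by $P=CP_0$ this reduces to establishing left $C$-equivariance. The key point is that the defining polynomial $p$ in Bernstein's construction can be chosen to be $(P\times H)$-semi-invariant, not merely $(P_0\times H)$-semi-invariant. This uses the absolute sphericality of $\mathbf{H}$: the complement of $\mathbf{P_0H}$ in $\mathbf{G}$ is a union of $\mathbf{B}$-stable divisors (colors) pulled back from $\mathbf{G/H}$, and since $\mathbf{P}$ stabilizes $\mathbf{P_0H}=\mathbf{PH}$, it permutes these divisors, so their product is $\mathbf{P}$-semi-invariant under a character factoring through $\mathbf{P}^{\mathrm{ab}}$. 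With such a $p$, the family $|p|^{2\lambda}\mu_0$ is $(P\times H)$-equivariant for all $\lambda$ on the open orbit, and this equivariance is preserved under meromorphic continuation to $G$ and under passage to the leading coefficient at $\lambda=0$. Non-vanishing of $\Phi$ is automatic, since by construction its restriction to the open orbit is a non-zero multiple of $\mu_0$. The resulting $(P\times H,\sigma\boxtimes\mathbf{1})$-equivariant distribution on $G$ then gives the desired $H$-invariant continuous linear functional on $\Ind_P^G(\sigma)$.
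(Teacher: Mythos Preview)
Your proposal has a genuine gap, and it lies precisely at the step you flag as routine rather than the one you flag as ``the heart of the argument.''  The $(P\times H)$-semi-invariance of the polynomial $p$ is not the difficulty: once $\mathbf{HP}=\mathbf{HP_0}$ is open, its complement is $\mathbf{P}\times\mathbf{H}$-stable, and Corollary~\ref{cor:act} (together with averaging over connected components and the Galois group) shows that any $F$-polynomial cutting out this complement can be replaced by one that is $(\mathbf{H}\times\mathbf{P})$-semi-invariant.  Absolute sphericity is not needed here, and your color argument, while plausible, is unnecessary.

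The real obstacle is that for Archimedean $F$ the representation $\sigma\in\cM(P)$ may be an \emph{infinite-dimensional} Casselman--Wallach representation, and then your appeal to Theorem~\ref{thm:GenB} (or directly to Bernstein's continuation) fails: those results require the $\sigma^*$-valued measure $\mu_0$ to be \emph{holonomic}, and Lemma~\ref{lem:hol}\eqref{hol:group} only delivers holonomicity of an equivariant distribution when the target space $V$ is finite-dimensional.  For infinite-dimensional $\sigma$ the family $|p|^{2\lambda}\mu_0$ is not known to be holonomic, so Theorem~\ref{thm:Ber} cannot be invoked.  This is exactly where the paper uses absolute sphericity.  Via the Casselman embedding theorem (Theorem~\ref{thm:CasSubRep}) one writes $\sigma$ as a quotient of $\Ind_Q^P(\rho)$ with $Q\subset P$ a minimal parabolic and $\rho$ finite-dimensional, and then embeds $\Sc^*(G,\sigma)\hookrightarrow\Sc^*(G\times P,\rho)$.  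The image $\eta_\lambda$ of $\xi_\lambda$ is now $\rho^*$-valued and $(H\times P\times Q)$-equivariant; sphericity of $\mathbf{H}$ guarantees that $\mathbf{H}\times\mathbf{P}\times\mathbf{Q}$ has only finitely many orbits on $\mathbf{G}\times\mathbf{P}$, so $\eta_\lambda$ is holonomic and admits a meromorphic continuation, which one checks remains in the image of the embedding and hence yields a continuation of $\xi_\lambda$.  Your argument is essentially correct when $\sigma$ is finite-dimensional, but in that case sphericity plays no role.
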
}

An important example of $\mathbf P$ as above is the $\bf G/H$-{\it adapted} parabolic subgroup that includes $\mathbf{P_0}$, see \cite[Definition 2.7 and Theorem 2.8]{KKS}.
For another version of Proposition \ref{prop:ParAdapt} see Proposition \ref{prop:ParSpher} below.

\DimaC{It appears that the methods of this paper allow a construction of invariant linear forms on certain subquotients of the degenerate principal series studied in \cite{Sah}, generalizing \cite{GSS}. We plan to investigate this distinction problem in a forthcoming paper.}


\subsection{Applications to generalized Whittaker models}
Let $G$ be a \DimaD{Zariski connected} real reductive group.
As mentioned above, Theorem \ref{thm:A} implies the well known result regarding the existence of Whittaker models for principal series representations of $G$\ \cite{Jac,Kos}.
More generally, we deduce from Theorem \ref{thm:B}  the non-vanishing of generalized Whittaker spaces for degenerate principal series representations.
In \S \ref{sec:GenWhit} we recall the notion of generalized Whittaker spaces, and  prove the following theorem.

\begin{introtheorem}\label{thm:C}
Let $P\subset G$ be a parabolic subgroup and let $e$ be an element of the nilradical of the Lie algebra of $P$. Let
$V$ be a finite-dimensional (continuous) representation of $P$, and let $\Ind_P^G(V)$ denote the smooth induction of $V$ to $G$.  Then the generalized Whittaker space $\cW_e(\Ind_P^G(V))$ does not vanish.
\end{introtheorem}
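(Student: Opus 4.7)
The plan is to translate $\cW_e(\Ind_P^G V)$ into a space of equivariant distributions on $G$ and invoke Theorem~\ref{thm:B}. Recall that $\cW_e(\pi) = \Hom_{U_e}(\pi,\psi_e)$ (continuous equivariant linear functionals), where the Whittaker pair $(U_e,\psi_e)$ is attached to $e$ via a Jacobson--Morozov $sl_2$-triple $(e,h,f)$ and a Lagrangian choice in the weight-$1$ eigenspace of $\ad(h)$. The group $U_e$ is unipotent, so after conjugation we may assume $U_e \subset \mathbf{P}_0$ for some minimal parabolic $\mathbf P_0 \subset \mathbf G$, making Theorem~\ref{thm:B} applicable with $\bfS = U_e$ and $\bfH = P$.

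Continuous Frobenius reciprocity identifies $\cW_e(\Ind_P^G V)$ with the space of tempered continuous $V^*$-valued distributions on $G$ that are equivariant under the two-sided action $(u,p)\cdot g = ugp^{-1}$ of $U_e \times P$, the equivariance determined by $\psi_e$ on $U_e$ and by the dual of the $P$-representation $\sigma_V$ on $V^*$ (suitably twisted by the modular character $\Delta_P$). Theorem~\ref{thm:B} then reduces the problem to exhibiting a single double coset $U_e g P \subset G$ admitting a non-zero tempered equivariant $V^*$-valued measure. Such measures correspond to vectors $v^* \in V^*$ on which the stabilizer $\Stab_{U_e\times P}(g) \cong U_e \cap gPg^{-1}$ acts through the restriction of this representation; temperedness is automatic since the stabilizer is unipotent and $\psi_e$ is unitary.

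To produce such a pair $(g,v^*)$, we exploit the hypothesis $e \in \mathfrak{n}_P$: it lets us choose the $sl_2$-triple so that $h$ lies in a Levi subalgebra of $\mathfrak p$, making the $\ad(h)$-grading compatible with the parabolic decomposition $\mathfrak g = \mathfrak{n}_{\overline P} \oplus \mathfrak{l} \oplus \mathfrak{n}_P$. Taking $g$ in a suitable double coset (for instance, one related to a Kostant-type transversal to $P$), one arranges the stabilizer $U_e \cap gPg^{-1}$ to lie in a unipotent subgroup on which $\psi_e$ vanishes; the equivariance condition then reduces to pure $\sigma_V^*$-invariance under a unipotent subgroup, which admits a non-zero solution by Lie--Kolchin applied to the finite-dimensional representation $V^*$. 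The main obstacle is precisely this geometric step --- finding $g$ so that the stabilizer is contained in $\ker(\psi_e) \times P$ --- and the assumption $e \in \mathfrak{n}_P$ is used crucially to guarantee its solvability for arbitrary finite-dimensional $V$.
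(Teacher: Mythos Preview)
Your strategy is exactly the paper's: translate $\cW_e(\Ind_P^G V)$ into $\Sc^*(G,\chi\otimes V^*\otimes\Delta_P^{-1})^{R\times P}$ via Lemma~\ref{lem:Integ}, then use Theorem~\ref{thm:B} (more precisely Theorem~\ref{thm:GenB}; $R$ is unipotent hence quasi-elementary) after exhibiting one double coset carrying an equivariant measure. But you leave the decisive step unfinished --- you call the choice of $g$ ``the main obstacle'' and gesture at Kostant-type transversals and careful placement of the $sl_2$-triple. None of this is needed.

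The paper simply takes $g=1$, i.e.\ the double coset $RP$ itself. Then the stabilizer is $C=R\cap P$, a unipotent group, so $\Delta_R=\Delta_C=1$. By Lemma~\ref{lem:Frob} one needs a $C$-fixed vector in $(\chi\otimes V^*)$. The crucial point you missed is that $\chi|_{C}$ is automatically trivial: for $y\in\fr\cap\fp$ one has $\chi(\Exp y)=\exp(i\kappa(e,y))$, and $\kappa(e,\fp)=0$ because $e$ lies in the nilradical $\fn$, which is the orthogonal of $\fp$ under the Killing form. So the condition reduces to finding a $C$-fixed vector in $V^*$; since $C$ is unipotent and sits inside the unipotent radical of a minimal parabolic of $P$, Lemma~\ref{lem:NilpTem}\eqref{it:nilp} gives that $C$ acts nilpotently on $V^*$, hence fixes a nonzero vector (your Lie--Kolchin is the same idea). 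That finishes the proof.

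In short: your outline is correct, but the ``geometric obstacle'' you flag dissolves once you try $g=1$; the hypothesis $e\in\fn$ is used exactly to make $\kappa(e,\fp)=0$, not to align an $sl_2$-grading.
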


Note that if $P$ is a minimal parabolic then Theorem \ref{thm:C} implies the non-vanishing of  $\cW_e(\Ind_P^G(V))$  for all nilpotent $e$ in the Lie algebra of $G$.
\subsection{Related results}
Let $G$ be a real reductive group, and $H$ be a (not necessarily compact) symmetric subgroup.
In \cite{vdB,BD} similar constructions were performed to give $H$-invariant functionals on principal series and generalized principal series representations of $G$.
Recent works  \cite{MOO,MO,Moe} deal with a related problem of constructing symmetry-breaking operators. Namely, they construct $H$-intertwining operators from certain principal and degenerate principal series representations
of $G$ to  certain (degenerate or not) principal series representations of $H$, \DimaN{and show that generically their constructions exhaust the whole space of intertwining operators.}
Note that the space of such intertwining operators (a.k.a. \emph{symmetry breaking operators})
is isomorphic to the space of $\Delta H$-invariant functionals on (degenerate or not)  principal series representations of $G\times H$, where $\Delta H$ denotes the image of the diagonal embedding of $H$ into $G\times H$.
Thus, \DimaN{Corollary \ref{cor:Prin} and Proposition \ref{prop:ParAdapt} extend some results of \cite{vdB,BD,MOO,MO,Moe} by considering functionals invariant under arbitrary algebraic subgroups. On the other hand, \cite{vdB,BD,MOO,MO,Moe}  allow inductions from non-minimal parabolic subgroups.}

The point of departure in the above-mentioned works is the theory of Knapp-Stein intertwining operators, and is thus directly based on analytic considerations.
Our approach is more algebraic, and covers also  spaces  that do not arise from symmetric pairs, in particular non-affine homogeneous spaces.

One can view both \cite{MOO,MO,Moe} and the $\Delta H \subset G\times H$-case of Corollary \ref{cor:Prin} and Proposition \ref{prop:ParAdapt} above as  part of the general program of constructing symmetry breaking operators, see \cite{Kob} and references therein. Some of the operators are constructed in that project  through their kernel distribution (as in the present paper), while some others are given by explicit differential operators.


\DimaA{We would like to mention that \cite{MO} use a different method for analytic continuation - the translation principle, which involves tensoring with finite-dimensional representations of $G$. This method was previously used in \cite{VW} for standard intertwining operators, and in \cite{CD} for symmetric pairs.}

We remark that a special case of our key Lemma \ref{lem:key} was formulated in \cite[Remark 3(ii)]{GSS}.

Our main motivation for the study of generalized Whittaker spaces comes from  \cite{MW}, which characterizes the existence of generalized Whittaker spaces for representations of $p$-adic reductive groups in terms of the wave-front sets of the representations. In \cite{Mat90} a certain partial analog of \cite{MW} is provided for complex reductive groups.
However, for $F=\R$ only very partial analogs of \cite{MW,Mat90} are proven. We view Theorem \ref{thm:C} as one more partial result of this kind, since it establishes the existence of all the generalized Whittaker models for degenerate principal series  that were expected to exist.
\DimaA{Regarding the classical Whittaker models, the classical method gives analytic continuation, while our method a-priory allows poles. However, one can show that actually our method creates no poles. Indeed, on the open Bruhat cell there are no poles, and thus if the family of distributions on the group that we construct would have a pole at some $\lambda_0$ then the principal part of the family would be supported on the complement to the open Bruhat cell. This support condition contradicts the equivariance properties (see  \cite[Lemma 6.5]{CHM} for a stronger statement).}

\DimaA{}

\subsection{Structure of the paper}
In  \S\ref{sec:Prel} we collect some basic results on invariants of quasi-elementary groups and some basic facts about Schwartz \DimaJ{functions and tempered} distributions.

In  \S \ref{sec:Dmod} we collect some facts about holonomic $D$ modules and, following Bernstein,  show their utility in meromorphic continuation of families of distributions.

In  \S \ref{sec:main} we prove a key result concerning extension of equivariant distributions (Lemma \ref{lem:key}) and deduce from it \DimaA{and from \cite{HS}} generalizations of Theorems \ref{thm:A} and  \ref{thm:B}. We also discuss the possibilities of further generalizations. Finally, we deduce Corollary \ref{cor:Prin} from Theorem  \ref{thm:B} \DimaA{and deduce Proposition \ref{prop:ParAdapt}  from Lemma \ref{lem:key}, \cite{HS} \DimaL{and the Casselman embedding theorem}}.

In \S \ref{sec:GenWhit} we recall the notion of generalized Whittaker spaces and prove a generalization of Theorem \ref{thm:C}.



\subsection{Acknowledgements}
We thank Avraham Aizenbud, Joseph Bernstein, Michele Brion, Shachar Carmeli, \DimaL{Dmitry Faifman,} Herve Jacquet, \DimaA{David Kazhdan,} Bernhard Kroetz, Andrey Minchenko, \DimaA{and Dmitry Timashev} for fruitful discussions\DimaD{, and the anonymous referees for the careful proofreading and useful remarks}.

D.G. was partially supported by ERC StG grant 637912, and ISF grant 756/12.
E.S. was partially supported by ERC grant 291612.
\Dima{Part of the work was done while D.G. was visiting the Institute for Mathematical Sciences, National University of Singapore in 2016. The visit was supported by the Institute.}
\DimaC{Part of the work was done while E.S. was visiting the Max-Planck-Institut f{\"u}r Mathematik in Bonn. He wishes to thank the MPIM for excellent working conditions.}
\DimaB{Part of the work was done while S. Sahi was a
Rosi and Max Varon Visiting Professor at the Weizmann Institute of Science. }

\section{Preliminaries}\label{sec:Prel}
\DimaH{Throughout the paper we let} $F$ be a local field of characteristic zero. We will denote algebraic varieties and algebraic groups defined over $F$ by bold letters and their $F$-points by the corresponding letters in regular font.

For a representation $V$ of a group $G$ we denote by $V^G$ the space of invariants.
\DimaD{We will say that a polynomial $p$ on $V$ is \emph{$G$-equivariant} if it changes by a character under the action of $G$ on the argument. }
\begin{defn}\label{def:GenInv} Let $(\pi,V)$ be a representation of a group G. A vector
$v \in V$ is called a generalized invariant vector if there is a natural number $k$ such that
$$(\pi(g_0) - \Id)(\pi(g_1) -\Id) \cdots (\pi(g_k) -\Id)v = 0  \quad \forall \,g_0, g_1,\dots, g_k \in G.$$
\end{defn}
Note that if $G$ is a connected Lie group and $\pi$ is a smooth representation then $v$ is generalized invariant if and only if the Lie algebra of $G$ acts nilpotently on the subrepresentation generated by $v$.

\subsection{Meromorphic families}
Let   $\C((\lambda))$ denote the field of Laurent power series, and let $E$ be a complex vector space and let $E((\lambda))$ denote the space of Laurent power series with coefficients in $E$. For any real $a>0$ define $a^{\lam}:=\sum_{i\geq 0}(\ln a)^i/(i!)\lambda^i\in \C((\lambda))$.

\begin{lem}\label{lem:terms}
Let a group $G$ act on $E$ linearly, and let $\psi$ be a character of $G$. Extend this action to $E((\lambda))$ in the natural way. Let $$f=\sum_{i=-n}^{\infty} a_i\lambda^i\in E((\lambda)) \text{ satisfy }gf=|\psi(g)|^{\lam}f.$$
Then $a_{-n}$ is $G$-invariant. Moreover, $a_{-n+l}$ is generalized $G$-invariant for any $l\geq 0$.
\end{lem}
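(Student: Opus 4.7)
The plan is to equate coefficients of $\lambda^j$ on both sides of the identity $gf=|\psi(g)|^{\lam}f$ and then proceed by induction on the power. Write $|\psi(g)|^{\lambda}=\sum_{i\geq 0}c_i(g)\lambda^i$, where $c_0(g)=1$ and $c_i(g)=(\ln|\psi(g)|)^i/i!$. Since the action of $G$ on $E((\lambda))$ is coefficient-wise, expanding gives
\begin{equation*}
g a_j \;=\; \sum_{k=-n}^{j} c_{j-k}(g)\, a_k \;=\; a_j + \sum_{k=-n}^{j-1} c_{j-k}(g)\, a_k
\end{equation*}
for every integer $j\geq -n$ and every $g\in G$.

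Taking $j=-n$, the sum on the right is empty, yielding $g a_{-n}=a_{-n}$ for all $g\in G$, which is the first assertion. For the second assertion, I would induct on $l\geq 0$, the case $l=0$ being just proved (with nilpotency index $k=0$ in Definition \ref{def:GenInv}). For the inductive step, the coefficient identity at $j=-n+l$ reads
\begin{equation*}
(g-\Id)\, a_{-n+l} \;=\; \sum_{k=-n}^{-n+l-1} c_{-n+l-k}(g)\, a_k .
\end{equation*}
The key observation is that the $c_{-n+l-k}(g)$ are scalars and therefore commute with the action of $G$ on $E$. Hence, for any $g_0,\dots,g_K\in G$,
\begin{equation*}
(g_0-\Id)\cdots(g_{K-1}-\Id)(g_K-\Id)\, a_{-n+l} \;=\; \sum_{k=-n}^{-n+l-1} c_{-n+l-k}(g_K)\,(g_0-\Id)\cdots(g_{K-1}-\Id)\,a_k .
\end{equation*}

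By the induction hypothesis, each $a_k$ with $-n\le k\le -n+l-1$ is generalized $G$-invariant, so there exist integers $N_k$ such that any product of $N_k+1$ operators of the form $(g-\Id)$ annihilates $a_k$. Setting $K:=\max_{k}N_k+1$ makes the right-hand side of the displayed identity vanish for every choice of $g_0,\dots,g_K$, which shows that $a_{-n+l}$ is generalized $G$-invariant and completes the induction. There is no real obstacle here beyond careful bookkeeping; the only point to be alert about is that one must use that the coefficients $c_i(g)$ are scalars (so that they can be pulled past the operators $(g_i-\Id)$), which is automatic since the $G$-action on $E((\lambda))$ does not affect $\lambda$.
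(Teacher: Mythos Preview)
Your proof is correct and follows essentially the same approach as the paper: expand $|\psi(g)|^{\lambda}$ as a power series in $\lambda$, compare coefficients to obtain $ga_j=\sum_{k=-n}^{j}\frac{(\ln|\psi(g)|)^{j-k}}{(j-k)!}\,a_k$, and then induct on $l$. The paper's own proof is terse (it simply states the coefficient identity and says ``the `moreover' part follows by induction on $l$''), whereas you have written out the induction step in full, including the observation that the scalars $c_i(g)$ commute with the $G$-action so that one can pull them past the operators $(g_i-\Id)$; this is exactly the detail the paper leaves to the reader.
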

\begin{proof}
From $gf=|\psi(g)|^{\lam}f$, comparing term by term,
 we obtain
\begin{equation}
ga_i=\chi(g)\sum_{j=-n}^i \frac{\ln |\psi(g)|^{i-j}}{(i-j)!} a_j
\end{equation}
This implies that $a_{-n}$ is $G$-invariant. The ``moreover" part follows by induction on $l$.
\end{proof}

\begin{defn}\label{def:mer}
For $f=\sum_{i=-n}^{\infty} a_i\lambda^i\in E((\lambda))$ with $a_{-n}\neq 0$ we say that $a_{-n}$ is the leading coefficient and $a_0$ is the constant term. \DimaD{For convenience we will say that both the leading coefficient and the constant term of the zero series are zero.}
\end{defn}

\begin{lem}\label{lem:Spec}

 Let $L\subset E((\lambda))$ be a finite-dimensional $\C((\lambda))$-vector space, and $W\subset E$ be the $\C$-subspace given by the leading coefficients of the series in $L$. Then $$\dim_{\C} W = \dim_{\C((\lambda))} L.$$
\end{lem}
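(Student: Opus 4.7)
The plan is to identify $W$ with $L_0/\lambda L_0$, where $L_0:=L\cap E[[\lambda]]$ is viewed as a $\C[[\lambda]]$-module, and then bound $\dim_\C L_0/\lambda L_0$ from above and below by $n:=\dim_{\C((\lambda))}L$.  First I would note that every leading coefficient of a nonzero $f\in L$ of $\lambda$-valuation $v(f)$ equals the constant term of $\lambda^{-v(f)}f\in L_0$, so $W$ is the image of the evaluation-at-$0$ map $\operatorname{ev}_0\colon L_0\to E$.  Moreover $\ker(\operatorname{ev}_0|_{L_0})=\lambda L_0$, because any $g\in L_0\cap\lambda E[[\lambda]]$ satisfies $\lambda^{-1}g\in L\cap E[[\lambda]]=L_0$.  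Hence $W\cong L_0/\lambda L_0$ as $\C$-vector spaces.

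For the upper bound $\dim_\C W\le n$, I would lift any $w_1,\ldots,w_{n+1}\in W$ to elements $g_i\in L_0$, use $\dim_{\C((\lambda))}L=n$ to produce a nontrivial relation $\sum_i c_i g_i=0$ with $c_i\in\C((\lambda))$, normalize (by multiplying the relation by the appropriate power of $\lambda$) so that all $c_i\in\C[[\lambda]]$ with at least one having nonzero constant term, and then evaluate at $\lambda=0$ to obtain $\sum_i c_i(0)w_i=0$ in $E$, contradicting $\C$-independence of the $w_i$.

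For the reverse inequality, I would fix lifts $g_1,\ldots,g_r\in L_0$ of a $\C$-basis of $W$ and show by a Nakayama-style successive approximation that they $\C[[\lambda]]$-generate $L_0$: given $f\in L_0$, write $\operatorname{ev}_0(f)=\sum_j a_{0,j}\operatorname{ev}_0(g_j)$, note that $f-\sum_j a_{0,j}g_j\in\ker(\operatorname{ev}_0|_{L_0})=\lambda L_0$, divide by $\lambda$, and iterate.  This produces power series $c_j:=\sum_{k\ge 0}a_{k,j}\lambda^k\in\C[[\lambda]]$ with $f\equiv\sum_j c_j g_j\pmod{\lambda^{m+1}}$ for every $m$, hence $f=\sum_j c_j g_j$ in $E[[\lambda]]$.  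Since every element of $L$ equals $\lambda^{-N}$ times an element of $L_0$ for some $N\ge 0$, the $g_j$ then span $L$ over $\C((\lambda))$, forcing $r\ge n$.

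The main obstacle I anticipate is the convergence step in the Nakayama argument: I must verify that the formally assembled $\sum_j c_j g_j$ really equals $f$ in $E[[\lambda]]$, which follows because the partial sums agree with $f$ modulo arbitrarily high powers of $\lambda$ and $\bigcap_m\lambda^m E[[\lambda]]=0$.  The rest is standard linear algebra over the DVR $\C[[\lambda]]$.
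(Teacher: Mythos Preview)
Your proof is correct and follows essentially the same approach as the paper: both establish the lower bound by a Nakayama-style successive approximation showing that lifts of a $\C$-basis of $W$ span $L$ over $\C((\lambda))$, and both handle the upper bound by lifting, normalizing a dependence relation, and reading off the constant term. Your packaging via $L_0=L\cap E[[\lambda]]$ and the identification $W\cong L_0/\lambda L_0$ is a bit cleaner than the paper's more ad hoc presentation, but the content is the same.
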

\begin{proof}
\DimaK{
First of all, note that $W$ is finite-dimensional. Indeed, for any linearly independent set of vectors in $W$, the corresponding set of constant series in $L$ is linearly independent over $\C((\lambda))$. Thus $\dim_{\C} W \leq \dim_{\C((\lambda))} L.$

Now let $\{w_j\}_{j=1}^n$ be a basis for $W$.  For each $j$ we can choose a series $u_j$ in $L$ without negative terms and with $w_j$ as its constant term. It is enough to show that $\{u_j\}_{j=1}^n$ span $L.$ Clearly, it is enough to show that any $u \in L$ whose leading term is the constant term is a  $\C((\lam))-$combination of the ${u_j}.$ To show that, let  $c_j \in \C$ be the unique constants such that $u'=u -\sum _{j=1}^nc_j u_j$ has constant term 0.
Next let $v' = u'/ \lambda$, then there exist unique coefficients $c^1_j$ such that $u''=v'-\sum _{j=1}^nc^1_ju_j$ has constant term 0. Proceeding in this way by induction we express $u=\sum _{j=1}^na_j u_j$ where $a_j = c_j+c^1_j\lambda+c^2_j \lam^2+\cdots...$}
\end{proof}

\subsection{\DimaA{Actions of $F$-groups}}\label{subsec:GrAct}
%
\begin{notn}
Following \cite{KK}, an $F$-variety for us is a variety over $\bar F$ equipped with a compatible action of the Galois group $\mathrm{Gal}(\bar F /F)$. Accordingly, for us the terms ``connected" and ``irreducible" mean ``absolutely
connected" and ``absolutely irreducible".
\end{notn}

\begin{prop}[{\cite[\DimaD{\S 1.A.2 } and Proposition 2.6]{Pop}}]\label{prop:large}
Let $\bf X$ be an irreducible $F$-variety that has a smooth $F$-point. Then $\bf X$ is $F$-dense, i.e. the set of $F$-points in $\bf X$ is (Zariski) dense in $\bf X$. \DimaH{In particular, any connected homogeneous $F$-variety with an $F$-point is $F$-dense.}
\end{prop}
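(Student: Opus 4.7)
My plan is to reduce the statement to two ingredients: an $F$-analytic local description of $\mathbf{X}$ near a smooth $F$-point, and the fact that a nonzero polynomial on $F^{n}$ cannot vanish on a nonempty $F$-analytic open set when $F$ is an infinite field. Writing $n = \dim \mathbf{X}$ and letting $x \in \mathbf{X}(F)$ be the given smooth $F$-point, I would begin by fixing an affine Zariski neighborhood of $x$ together with regular functions $f_1,\dots,f_n$ on it whose differentials at $x$ form a basis of $T_x^{\ast}\mathbf{X}$. These exist because $x$ is smooth and defined over $F$.

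Next, I would invoke the implicit function theorem for local fields of characteristic zero (classical over $\mathbb{R}$ and $\mathbb{C}$; see Serre's \emph{Lie Algebras and Lie Groups} for the non-Archimedean case) applied to the map $(f_1,\dots,f_n)$. This gives an $F$-analytic isomorphism from an $F$-analytic neighborhood $U \subset \mathbf{X}(F)$ of $x$ onto an $F$-analytic neighborhood $V \subset F^n$ of $0$. In particular $\mathbf{X}(F)$ contains the $F$-analytically open subset $U$, which is highly nontrivial.

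Now let $\mathbf{Z}$ denote the Zariski closure in $\mathbf{X}$ of $\mathbf{X}(F)$, and suppose for contradiction that $\mathbf{Z} \neq \mathbf{X}$. Since $\mathbf{X}$ is irreducible, $\mathbf{Z}$ has strictly smaller dimension, so after possibly shrinking the affine neighborhood of $x$ we can find a regular function $h$ on it that is not identically zero on $\mathbf{X}$ but vanishes on $\mathbf{Z}$. Transporting $h$ through the chart above yields an $F$-analytic function on $V$ that vanishes on the open set $U$ but is not identically zero as a convergent power series (it is actually a polynomial composed with an $F$-analytic isomorphism). Since $F$ is infinite, a convergent power series on $F^n$ that vanishes on a nonempty open set must be identically zero, a contradiction. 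Hence $\mathbf{Z} = \mathbf{X}$.

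For the ``in particular'' clause, note that a homogeneous $F$-variety is smooth (any point is in the image of an $F$-analytic chart around any other point via the transitive group action), and a smooth connected variety is irreducible; so the hypotheses of the first part are automatic. The only delicate point in the whole argument is the implicit function theorem in the non-Archimedean setting, which is why the result is credited to \cite{Pop}; modulo this input the rest is elementary.
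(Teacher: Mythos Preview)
The paper does not prove this proposition; it merely records it with a citation to \cite{Pop}. Your argument is the standard direct proof that local fields of characteristic zero are \emph{large} (or \emph{ample}) fields, which is precisely the content being quoted from that reference. The outline is correct: the implicit function theorem produces an $F$-analytic open patch inside $\mathbf{X}(F)$, and a nonzero regular function cannot vanish identically on such a patch.

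One step deserves tightening. After transporting $h$ through the chart you obtain an $F$-analytic function on $V$ that vanishes identically on $V$ (since $U\subset \mathbf{X}(F)\subset \mathbf{Z}$, not ``on the open set $U$'' as written). The real content of the contradiction is that its Taylor expansion at $0$ is nevertheless nonzero. This holds because $x$ is smooth, so the local ring $\mathcal{O}_{\mathbf{X},x}$ is regular and injects into its completion $F[[t_1,\dots,t_n]]$, and $h$ is nonzero in $\mathcal{O}_{\mathbf{X},x}$ by irreducibility of $\mathbf{X}$. You gesture at this with ``a polynomial composed with an $F$-analytic isomorphism'', but the injection of the local ring into its completion is the actual mechanism; stating it explicitly removes any ambiguity. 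With that clarification the proof is complete, and your treatment of the ``in particular'' clause (smoothness of homogeneous varieties, and smooth connected $\Rightarrow$ irreducible) is correct.
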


Let $\bfQ$ be a linear algebraic $F$-group. Following \cite[\S 3]{KK} we say that $\bfQ$ is quasi-elementary if it does not contain a proper parabolic $F$-subgroup. Actually, \cite[\S 3]{KK} give a different definition, but explain why it is equivalent to the one given here. \DimaD{Note that a quasi-elementary group is necessarily connected}.

Note that all \DimaD{the connected} solvable groups are quasi-elementary and if $F=\R$ then so are all the \DimaD{connected} \DimaF{linear algebraic $\R$-}groups with compact-by-solvable $\R$-points.

\begin{prop}\label{prop:KK}
\DimaB{Let
a quasi-elementary linear algebraic $F$-group $\bfQ$ act on a quasi-affine irreducible $F$-dense  $F$-variety $\bf X$.} Let $I \subset  \cO(\bf X)$ be a non-zero $\bf Q$-stable ideal.  Then $I$
contains a non-zero $\bf Q$-equivariant \DimaA{regular function} defined over $F$.
\end{prop}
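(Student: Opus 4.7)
The plan is to reduce the problem to a finite-dimensional representation of $\bfQ$ and then invoke the structural property that characterizes quasi-elementary groups.

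First I would pick any non-zero $F$-rational element $f_0 \in I$, which exists since $I$ is an $F$-defined ideal and $\bfX$ is $F$-dense. Because the $\bfQ$-action on $\cO(\bfX)$ is locally finite, the $F$-span $W := \Span_F\{\bfQ \cdot f_0\}$ is a finite-dimensional $\bfQ$-stable $F$-subspace of $I$. Let $F[W] \subset \cO(\bfX)$ denote the $F$-subalgebra generated by $W$; it is positively graded and $\bfQ$-stable, and being a subalgebra of the integral domain $\cO(\bfX)$, it is itself an integral domain. Since $W \subset I$ and $I$ is an ideal, every positive-degree element of $F[W]$ already lies in $I$.

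The core of the argument is then to produce a non-zero $\bfQ$-semi-invariant of positive degree in $F[W]$ defined over $F$. This is essentially the content of the structural property underlying the definition of quasi-elementary $F$-groups (cf.\ \cite{KK}): every action of $\bfQ$ on a non-empty projective $F$-variety admits a $\bfQ$-stable effective $F$-divisor. Applied to $\mathrm{Proj}(F[W])$, this yields the desired semi-invariant. Geometrically, I would construct such a divisor by taking a minimal non-empty closed $F$-subvariety $\bfY$ of codimension one in $\mathrm{Proj}(F[W])$ that is stable under $\bfQ$; over $\bar F$ such a $\bfY$ is a single Galois orbit of closed $\bfQ_{\bar F}$-orbits, each of which has parabolic stabilizer in $\bfQ_{\bar F}$. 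The quasi-elementary hypothesis together with Galois descent then forces the homogeneous defining polynomial of $\bfY$ to be $\bfQ$-semi-invariant and defined over $F$.

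The resulting polynomial is the desired $\bfQ$-equivariant regular function on $\bfX$: non-zero by construction, lying in $I$ because of its positive degree, and defined over $F$. The main obstacle will be the middle step, the existence of the $F$-rational $\bfQ$-stable divisor in $\mathrm{Proj}(F[W])$; this rests on the interplay between the orbit structure of $\bfQ$ on projective space (closed orbits have parabolic stabilizers) and the quasi-elementary hypothesis ($\bfQ$ has no proper parabolic $F$-subgroup), and is the technically most delicate part of the argument.
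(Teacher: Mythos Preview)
Your strategy of passing to a finite-dimensional $\bfQ$-stable $F$-subspace $W\subset I$ is the right first move. The remainder of the outline, however, has genuine gaps. First, the subalgebra $F[W]\subset\cO(\X)$ is not graded: the surjection $\mathrm{Sym}_F(W)\onto F[W]$ gives only a filtration, so $\mathrm{Proj}(F[W])$ is undefined as written (and if you work in $\mathrm{Sym}_F(W)$ instead, you must still check that the semi-invariant you find does not die under the multiplication map to $\cO(\X)$). Second, and more seriously, your construction of the stable divisor is circular: you begin by choosing ``a minimal non-empty closed $F$-subvariety of codimension one \dots\ stable under $\bfQ$'', but the existence of \emph{any} $\bfQ$-stable $F$-hypersurface is exactly what has to be produced. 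Your justification via parabolic stabilizers of closed orbits does not repair this: those stabilizers are parabolics over $\bar F$, which may well be proper even when $\bfQ$ is quasi-elementary over $F$ (think of an anisotropic semisimple $F$-group acting on its variety of Borels). The hypothesis only excludes proper \emph{$F$-defined} parabolics, so no conclusion about the $\bar F$-orbits follows.

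For comparison, the paper's proof is short and structurally different: for affine $\X$ it simply cites \cite[Proposition~3.10]{KK}; for quasi-affine $\X$ it picks a finite-dimensional $\bfQ$-stable generating subspace $\W\subset\cO(\X)$, embeds $\X$ equivariantly as a dense open in its closure $\Y\subset\W^*$, pulls $I$ back to a $\bfQ$-stable ideal of $\cO(\Y)$, and applies the affine case there. If you want to prove the affine statement rather than quote it, the clean route is to produce a $\bfQ$-fixed $F$-point in $\bP(W)$ --- equivalently a one-dimensional $\bfQ$-stable $F$-subspace of $W$ --- directly from the equivalent characterization of quasi-elementary groups in \cite[\S 3]{KK}; no divisors or $\mathrm{Proj}$ are needed.
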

\begin{proof}
For affine $\X$ this is {\cite[Proposition 3.10]{KK}}.

Let us show that any quasi-affine $\X$ can be $\bf Q$-equivariantly embedded into an affine
$F$-variety $\bf Y$ as an open dense subset.
Indeed,  there exists a generating ${\bf Q}$-invariant finite dimensional subspace $\W\subset \cO(\X)$. Such $\W$ defines an embedding of $\X$ into $\W^*$. Let $\bf Y$ be the closure of $\X$ in $\W^*$.

Now, let $J$ be the preimage of $I$ under the restriction morphism $\cO(\Y)\into \cO(X)$. Since $\Y$ is affine and $J$ is $\bf Q$-stable, $J$ contains a non-zero $\bf Q$-equivariant polynomial $p$ defined over $F$. Restricting $p$ to $\X$ we obtain the required regular function.
\end{proof}

\begin{cor}\label{cor:pExists}
\DimaB{Let
a quasi-elementary linear algebraic $F$-group $\bfQ$ act on a quasi-affine irreducible $F$-variety $\bf X$. Suppose that $\bf X$ has an open orbit $\bfO$ that has an $F$-point.} Then \(\mathbf{O}\)  is a  basic open subset of $\mathbf{X}$ defined by a $\mathbf{Q}$-equivariant polynomial defined over $F$. \end{cor}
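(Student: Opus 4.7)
The goal is to produce a $\mathbf{Q}$-equivariant regular function $p$ on $\mathbf{X}$, defined over $F$, whose non-vanishing locus is exactly $\mathbf{O}$. The natural candidate is any nonzero equivariant element in the ideal $I$ of regular functions on $\mathbf{X}$ that vanish on the complement $\mathbf{X}\setminus\mathbf{O}$. Proposition \ref{prop:KK} is tailor-made for producing such an element; the task reduces to checking its hypotheses and then checking that the resulting $p$ does not vanish on $\mathbf{O}$.

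The plan is as follows. First, observe that $\mathbf{O}$ is a connected homogeneous $F$-variety possessing an $F$-point, hence by Proposition \ref{prop:large} it is $F$-dense. Since $\mathbf{X}$ is irreducible, the open orbit $\mathbf{O}$ is Zariski-dense in $\mathbf{X}$, and it follows that $\mathbf{X}$ is also $F$-dense. Next, let $Z:=\mathbf{X}\setminus\mathbf{O}$, which is a closed $\mathbf{Q}$-stable subset of $\mathbf{X}$ defined over $F$, and let $I\subset\mathcal{O}(\mathbf{X})$ be the ideal of regular functions vanishing on $Z$. Then $I$ is a nonzero $\mathbf{Q}$-stable ideal, so Proposition \ref{prop:KK} (applicable because $\mathbf{X}$ is quasi-affine, irreducible and $F$-dense, and $\mathbf{Q}$ is quasi-elementary) produces a nonzero $\mathbf{Q}$-equivariant regular function $p\in I$ defined over $F$.

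It remains to verify that $\mathbf{O}=\mathbf{X}_p$. The inclusion $\mathbf{X}_p\subset\mathbf{O}$ is immediate from $p\in I$. For the reverse inclusion, note that since $p$ transforms by a character under $\mathbf{Q}$, its zero locus is $\mathbf{Q}$-stable; its intersection with $\mathbf{O}$ is therefore either empty or all of $\mathbf{O}$. But $\mathbf{O}$ is Zariski-dense in $\mathbf{X}$, so if $p$ vanished on $\mathbf{O}$ it would vanish on all of $\mathbf{X}$, contradicting $p\neq 0$. Hence $p$ is nonvanishing on $\mathbf{O}$, giving $\mathbf{O}\subset\mathbf{X}_p$.

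I do not anticipate a serious obstacle: the argument is essentially a direct application of Proposition \ref{prop:KK} once one has packaged the complement of the orbit as the vanishing locus of a $\mathbf{Q}$-stable ideal. The only subtle point is ensuring $F$-density of $\mathbf{X}$, which uses that an orbit of an $F$-point is connected homogeneous (hence smooth, with an $F$-point), and is Zariski dense in the irreducible variety $\mathbf{X}$.
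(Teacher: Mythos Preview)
Your proof is correct and follows essentially the same approach as the paper: apply Proposition~\ref{prop:large} to obtain $F$-density of $\mathbf{X}$, use Proposition~\ref{prop:KK} to produce a nonzero $\mathbf{Q}$-equivariant $p$ in the ideal of $\mathbf{X}\setminus\mathbf{O}$, and then observe that equivariance forces $p$ to be nonvanishing on the entire orbit $\mathbf{O}$. The only minor difference is that the paper applies Proposition~\ref{prop:large} directly to $\mathbf{X}$ (the $F$-point in $\mathbf{O}$ is smooth in $\mathbf{X}$ since $\mathbf{O}$ is open and homogeneous), whereas you route through $F$-density of $\mathbf{O}$ first; both are fine.
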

\begin{proof}
\DimaA{
By Proposition \ref{prop:large}  $\bf X$ is $F$-dense.
Let $I$ be the ideal defining $\mathbf{X} \setminus \mathbf{O}$. By Proposition \ref{prop:KK}, $I$ contains a non-zero $\mathbf{Q}$-equivariant polynomial $p$. Then $p(x)\neq 0$ for some $x\in {\bf O}$ and thus for all $x\in {\bf O}$.}
\end{proof}

\begin{exm}\label{exm:grpCase}
\DimaC{
Let $\bf G$ be a reductive group, ${\bf P \subset G}$ be a parabolic  subgroup, and ${\bf \bar P}$ be an opposite parabolic ({\it i.e.} $\bf L:=P\cap\bar P$ is a Levi subgroup of both $\bf P$ and $\bf \bar P)$.  Suppose that $\bf G,P$ and $\bf \bar P$ are defined over $F$. Consider the two-sided action of ${\bf Q}:={\bf P\times \bar P}$ on $\X:=\bf G$. Let us describe the polynomial $p$  that defines the open Bruhat cell $\bf P\bar P$ in $\bf G$.

Let $\fn$ and $\bar \fn$ be the nilradicals of the Lie algebras of ${\bf P}$ and $\bf \bar P$ in correspondence.  Let $d:=\dim \fn =\dim \bar\fn$.
Let $\g$ denote the Lie algebra of $\bf G$. Let $v\in \Lambda^d(\fn)\subset \Lambda^d(\g)$ and $\bar v\in \Lambda^d(\bar \fn)\subset \Lambda^d(\g)$ be non-zero vectors. Let $(\sigma,V)$ and $(\bar \sigma, \bar V)$ be the subrepresentations of $ \Lambda^d(\g)$ generated by $v$ and $\bar v$. Then $V$ and $\bar V$ are irreducible and contragredient to each other. They define a matrix coefficient function $p(g):=\langle \bar v, \sigma(g) v \rangle$.
Let us show that the zero set of $p$ is precisely $\bf G - P\bar P$.

Choose a maximal torus $\bf T$ and a Borel subgroup $\bf B$ in $\bf G$ such that $\bf T \subset B \subset P$ and $\bf T\subset \bar P$. By the Bruhat decomposition, one can choose a subset $W'$ of the normalizer of $T$ that includes the unit and is a full set of representatives for the double coset space $\bf P\setminus G /\bar P$.
 Note that $v$ is the highest weight vector of $V$ and $\bar v$ is the lowest weight vector of $\bar V$. Note also that all the elements of $W'$ except the unit move $v$ to a weight vector of a different weight, which thus pair to zero  with $\bar v$. Therefore $p$ vanishes on all the elements of $W'$ except the unit. Since $p$ is $\bf Q$-equivariant, and $p(1)\neq 0$, it satisfies the conditions. }
\end{exm}

\begin{cor}\label{cor:pQH}
Let $\G$ be a linear algebraic $F$-group and let $\bfQ,\bfH$ be $F$-subgroups, where $\bfQ$ is quasi-elementary. \DimaH{Let $\bfQ\times \bfH$ act on $\G$ by left/right multiplication and let $\bfQ g \bfH$ be a double coset of some $F$-point $g\in G$. Let $\bfZ$ denote the Zariski closure of $\bfQ g \bfH$.

\DimaI{Then there exists a non-zero $\bfQ \times \bfH$-equivariant polynomial $q$  on $\bf Z$ defined over $F$ and vanishing outside $\bfQ g\bfH$.}}
\end{cor}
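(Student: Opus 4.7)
The plan is to reduce the statement to an application of Proposition \ref{prop:KK} for the quasi-elementary group $\bfQ$ alone, by realizing the quotient $\bfZ/\bfH$ inside a vector space via Chevalley's theorem. Using the $F$-rational form of Chevalley's theorem for the $F$-subgroup $\bfH\subset\G$, I choose a finite-dimensional representation $\rho\colon\G\to GL(V)$ defined over $F$, a non-zero vector $v\in V(F)$, and a character $\chi_L$ of $\bfH$ defined over $F$, such that $\bfH$ is the stabilizer in $\G$ of the line $L=Fv$ and acts on $L$ through $\chi_L$. Let $\phi\colon\G\to V$ be the $\G$-equivariant orbit map $\phi(z):=\rho(z)v$, and let $C\subset V$ be the Zariski closure of $\phi(\bfZ)$. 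Then $C$ is an irreducible affine $F$-subvariety of $V$ containing the $F$-point $\phi(g)$, hence $F$-dense by Proposition \ref{prop:large}. The group $\bfQ$ acts on $C$ through $\rho$, the group $\mathbb{G}_m$ acts on $C$ by scaling, and these two actions commute. Let $C_0\subset C$ be the Zariski closure of $\phi(\bfZ\setminus\bfQ g\bfH)$; it is a proper closed $\bfQ\times\mathbb{G}_m$-stable subvariety of $C$, and its complement in $C$ is the single $\bfQ\times\mathbb{G}_m$-orbit of $\phi(g)$.

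Applying Proposition \ref{prop:KK} to the quasi-elementary group $\bfQ$ acting on the affine, irreducible, $F$-dense variety $C$ together with the non-zero $\bfQ$-stable ideal $I\subset\cO(C)$ cutting out $C_0$, I obtain a non-zero $\bfQ$-equivariant function $\tilde q\in I$ defined over $F$, transforming by some character $\chi_Q$ of $\bfQ$. Since $\bfQ$ commutes with the $\mathbb{G}_m$-scaling, each $\mathbb{G}_m$-homogeneous component of $\tilde q$ is again $\bfQ$-equivariant with the same character $\chi_Q$; and since $C_0$ is $\mathbb{G}_m$-stable, each homogeneous component also vanishes on $C_0$. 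Replacing $\tilde q$ by a non-zero such component, I may assume that $\tilde q$ is homogeneous of some positive degree $k$.

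The desired polynomial is then $q(z):=\tilde q(\rho(z)v)$, regarded as a regular function on $\bfZ$ defined over $F$. A direct computation gives $q(\alpha z)=\chi_Q(\alpha)q(z)$ for $\alpha\in\bfQ$ using $\bfQ$-equivariance of $\tilde q$, and $q(zh)=\tilde q(\chi_L(h)\rho(z)v)=\chi_L(h)^{k}q(z)$ for $h\in\bfH$, using the homogeneity of $\tilde q$ together with $\rho(h)v=\chi_L(h)v$. The vanishing of $q$ on $\bfZ\setminus\bfQ g\bfH$ is immediate from $\phi(\bfZ\setminus\bfQ g\bfH)\subset C_0$ and $\tilde q|_{C_0}=0$. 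Finally, $q(g)\neq 0$: as a non-zero $\bfQ\times\mathbb{G}_m$-eigenvector on $C$, the zero locus of $\tilde q$ is $\bfQ\times\mathbb{G}_m$-stable; since $\tilde q$ vanishes on $C_0$ but is not identically zero on $C$, its non-vanishing locus is exactly the single open orbit $C\setminus C_0$, which contains $\phi(g)$.

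The main technical obstacle is ensuring $F$-rationality throughout, especially in the application of Chevalley's theorem to the $F$-subgroup $\bfH\subset\G$ and in extracting an $F$-rational $\bfQ$-eigenvector from Proposition \ref{prop:KK}. Both of these are standard but need to be invoked with care; once set up, the remainder of the argument is essentially a direct verification.
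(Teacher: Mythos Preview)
Your approach is essentially that of the paper---reduce via Chevalley's theorem to finding a $\bfQ$-eigenvector on an affine model, then use homogeneity with respect to the scaling action to secure $\bfH$-equivariance---but there is a genuine gap. The assertion that $\mathbb{G}_m$ acts on $C=\overline{\phi(\bfZ)}$ by scaling (i.e.\ that $C$ is a cone) is unjustified and can fail: the right $\bfH$-action on $\bfZ$ translates under $\phi$ only into scaling by the image $\chi_L(\bfH)\subset\mathbb{G}_m$, which need not be all of $\mathbb{G}_m$. In particular $\chi_L$ is trivial whenever $\bfH$ admits no non-trivial character, e.g.\ when $\bfH$ is unipotent or semisimple. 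For a toy instance take $\G=\bfQ=\mathbb{G}_m$, $\bfH=\{1\}$, $V=F^2$ with $\mathbb{G}_m$ acting with weights $1$ and $2$, and $v=(1,1)$; then $\phi(\bfZ)=\{(t,t^2):t\neq 0\}$ and $C=\{y=x^2\}$ is not a cone. When $C$ is not a cone the ring $\cO(C)$ carries no grading, so ``homogeneous component of $\tilde q$'' is ill-defined; and if you instead lift $\tilde q$ to a polynomial on $V$ and take a homogeneous piece, there is no reason that piece is still a $\bfQ$-eigenvector on $C$ or still lies in $I$. The same problem affects your claim that $C_0$ is $\mathbb{G}_m$-stable. (A secondary point: your claim that $C$ is irreducible relies on $\bfZ$ being irreducible, which is not automatic when $\bfH$ is disconnected.)

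The paper's remedy is simple and makes your argument go through verbatim: instead of $C=\overline{\phi(\bfZ)}$, work on $\X:=\overline{\bfQ\,\rho(g)\,{\bf D}}$, the closure of the $\bfQ$-orbit of the \emph{entire} Chevalley line ${\bf D}=Fv$ rather than of the single vector $v$. This set is manifestly a cone, is irreducible because $\bfQ\times\mathbb{G}_m$ is connected, and contains $\bfQ\,\rho(g)\,{\bf D}^{\times}$ as an open $\bfQ\times\mathbb{G}_m$-orbit. One then applies Proposition~\ref{prop:KK} to the ideal of functions vanishing outside this open orbit, takes the leading homogeneous term (now legitimate), and pulls back via $z\mapsto\rho(z)v$ exactly as you do. With this single change your proof coincides with the paper's.
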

\begin{proof}
By Chevalley's theorem (see e.g. \cite[Chapter II, Theorem 5.1]{BorGr}) there exist an algebraic representation $\W$ of $\G$ and a line $\bf D \subset W$ both defined over $F$ such that $\bfH=\{x\in \G \, \vert \, x{\bf D=D}\}$.

\DimaD{Let  $\X$ denote the closure of $\bfQ g{\bf D}$ in $\W$.  Denote ${\bf D^\times:=D}\setminus \{0\}$ and note that $\bfQ g {\bf D^{\times}}$ is open in $\X$ and let $I$ be the ideal of all polynomials on $\X$ that vanish outside $\bfQ g {\bf D^{\times}}$. \DimaH{By Proposition \ref{prop:large}, $\bfQ g {\bf D^{\times}}$ is $F$-dense, and thus so is $\X$.}
 By Proposition \ref{prop:KK}  $I$ contains a non-zero $\mathbf{Q}$-equivariant polynomial $p'$. Let $p$ be the leading homogeneous term of $p'$. Note that $p$ is $\mathbf{Q}$-equivariant as well and $p\in I$.
Let $v\in {\bf D^{\times}}$ be an $F$-vector and define a map $a:\G\to \W$ by $a(x):=xv$. Then $a^{-1}(\bfQ g {\bf D^{\times}})=\bfQ g \bfH$ and thus $a(\bfZ)\subset\X.$
Define $q$ on  $\bfZ$ by $q:=p\circ a$. Note that $q$ is non-zero, $\bfQ \times \bfH$-equivariant and vanishes outside $\bfQ g\bfH$.}
\end{proof}

\DimaA{


\begin{prop}[{\cite[Proposition 1.1]{KKV}}]\label{prop:Invert}
Let $\X$ and $\Y$ be irreducible algebraic varieties, and let $\cO^{\times}(\X)$ and $\cO^{\times}(\Y)$ denote the groups of nowhere vanishing regular functions on them. Then the natural map
$$\cO^{\times}(\X)\otimes \cO^{\times}(\Y)\to \cO^{\times}(\X\times \Y)$$
is an epimorphism.
\end{prop}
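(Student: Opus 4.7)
I plan to prove the stronger statement that every $F\in\cO^{\times}(\X\times\Y)$ admits a factorization $F(x,y)=g(x)h(y)$ with $g\in\cO^{\times}(\X)$ and $h\in\cO^{\times}(\Y)$; surjectivity of the natural map follows immediately. My approach is a normalization step followed by a divisor-theoretic rigidity argument, with a Galois descent at the end. Working first over an algebraic closure $\bar F$ of the base field, pick geometric points $x_0\in\X(\bar F)$ and $y_0\in\Y(\bar F)$, and set $g(x):=F(x,y_0)$ and $h(y):=F(x_0,y)/F(x_0,y_0)$. Both are nowhere vanishing regular functions on the respective factors. The quotient $G(x,y):=F(x,y)/(g(x)h(y))$ is then a unit on $\X\times\Y$ satisfying $G|_{\X\times\{y_0\}}\equiv 1$ and $G|_{\{x_0\}\times\Y}\equiv 1$, and the task reduces to showing $G\equiv 1$.

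\textbf{Rigidity via divisors.} To prove this, embed $\X$ and $\Y$ into normal proper compactifications $\bar\X\supset\X$ and $\bar\Y\supset\Y$ (existence by Nagata's theorem followed by normalization), so that $\bar\X\times\bar\Y$ is a normal proper compactification of $\X\times\Y$. View $G$ as a rational function on it; its divisor is supported on the boundary, whose codimension-one prime components are of the form $D_i\times\bar\Y$ with $D_i\subset\bar\X\setminus\X$ prime, or $\bar\X\times E_j$ with $E_j\subset\bar\Y\setminus\Y$ prime, so
\[\operatorname{div}(G)=\sum_i a_i(D_i\times\bar\Y)+\sum_j b_j(\bar\X\times E_j).\]
Restricting to the slice $\{x_0\}\times\bar\Y$: the horizontal components do not meet the slice since $x_0\in\X$ avoids each $D_i$, so only $\sum_j b_j E_j$ survives. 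On the other hand $G|_{\{x_0\}\times\bar\Y}$ is the rational extension of the regular function $1$ on $\{x_0\}\times\Y$, so it equals $1$ identically on $\bar\Y$, with zero divisor. Hence $b_j=0$ for every $j$, and by the symmetric argument $a_i=0$ for every $i$. Thus $\operatorname{div}(G)=0$ on the proper normal variety $\bar\X\times\bar\Y$, so $G$ is a nowhere vanishing regular function there, necessarily constant; the condition $G(x_0,y_0)=1$ forces $G\equiv 1$.

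\textbf{Galois descent and main obstacle.} To descend the factorization $F=g\cdot h$ from $\bar F$ to $F$, note that for each $\sigma\in\operatorname{Gal}(\bar F/F)$ the equality $g(x)h(y)=\sigma(g)(x)\sigma(h)(y)$ forces $\sigma(g)/g\in\bar F^{\times}$ to be a constant $c(\sigma)$; this defines a $1$-cocycle in $\bar F^{\times}$, which is a coboundary by Hilbert 90, so $g$ can be rescaled by a constant to become Galois-invariant, and $h$ is rescaled by the inverse constant, producing factors defined over $F$. The main technical obstacle is the divisor calculus in the rigidity step: one must use a normal compactification so that Weil-divisor multiplicities are well defined, and verify that the restriction to the slice $\{x_0\}\times\bar\Y$ of each product-of-factors divisor is computed by transverse intersection. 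Both hold in the normal proper setting, and the degenerate case when $\bar\X\setminus\X$ has codimension $\geq 2$ merely empties the corresponding index set, making the argument only easier.
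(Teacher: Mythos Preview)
The paper does not give a proof of this proposition; it is quoted from \cite{KKV} without argument. Your proof is correct and follows the standard divisor-theoretic route, with two small points worth tightening. First, the compactification step tacitly assumes that $\X$ and $\Y$ are normal: if they are not, normalizing the Nagata compactification alters the open part. This is easily repaired by first pulling $G$ back to the product of normalizations $\tilde\X\times\tilde\Y$ and running the argument there; since normalization is surjective, $G\equiv 1$ upstairs forces $G\equiv 1$ on $\X\times\Y$.

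Second, the slice-restriction step you flag as the main obstacle can be made precise without appealing to intersection theory. Shrink to an affine open $U\times V$ with $x_0\in U\subset\X$ and $V\subset\bar\Y$ meeting only the boundary component $E_j$, and pick a uniformizer $t\in\cO_{\bar\Y,E_j}$ regular on $V$. On $U\times V$ the divisor of $G$ is $b_j(U\times E_j)$, so $G=u\,t^{b_j}$ with $u\in\cO^{\times}(U\times V)$ by normality; restricting to $\{x_0\}\times V$ then gives $\operatorname{ord}_{E_j}(G|_{x_0})=b_j$ directly. With these two clarifications your argument is complete, and the Galois descent via Hilbert~90 is fine as written (note that $g$ is defined over a finite extension, so the cocycle is automatically continuous).
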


\begin{cor}\label{cor:act}
Let a connected linear algebraic group $\mathbf{G}$ act on an irreducible algebraic variety  $\mathbf{\X}$. Let $p\in \cO(\X)$ be a regular function such that the zero set of $p$ is $\G$-invariant. Then $p$ changes under the action of $\G$ by some algebraic character.
\end{cor}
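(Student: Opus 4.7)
The plan is to apply Proposition \ref{prop:Invert} to the function $F(g,x) := p(g \cdot x)$, viewed as a nowhere-vanishing regular function on $\G \times U$, where $U := \X \setminus \{p=0\}$, in order to factor it as a product of units on $\G$ and on $U$ and extract the character from the $\G$-factor.

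First I would reduce to the case $p \not\equiv 0$ (otherwise there is nothing to prove) and set $U := \X \setminus \{p=0\}$. The assumption that the zero set of $p$ is $\G$-invariant makes $U$ a $\G$-stable open dense subset of the irreducible variety $\X$, so $U$ is irreducible. For every $(g,x) \in \G \times U$ we have $g \cdot x \in U$, hence $F(g,x) := p(g \cdot x)$ is a nowhere-vanishing regular function on $\G \times U$, i.e.\ $F \in \cO^{\times}(\G \times U)$.

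Next I apply Proposition \ref{prop:Invert} to the irreducible varieties $\G$ and $U$ to write $F(g,x) = \phi(g)\, \psi(x)$ for some $\phi \in \cO^{\times}(\G)$ and $\psi \in \cO^{\times}(U)$ (any overall scalar being absorbed into $\phi$). Setting $g = e$ yields $p(x) = \phi(e)\, \psi(x)$ on $U$, and since $\phi(e) \neq 0$, we obtain $\psi = p|_U / \phi(e)$. Substituting back gives the identity $p(g \cdot x) = \chi(g)\, p(x)$ on $\G \times U$, where $\chi(g) := \phi(g)/\phi(e)$ is a regular function on $\G$ with $\chi(e) = 1$. Both sides of this identity are regular functions on all of $\G \times \X$ and agree on the dense open subset $\G \times U$, so the identity extends to the whole of $\G \times \X$.

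It remains to verify that $\chi$ is multiplicative. Computing $p(g_1 g_2 \cdot x)$ in two ways, directly as $\chi(g_1 g_2)\, p(x)$ and via the intermediate point $g_2 \cdot x$ as $\chi(g_1)\chi(g_2)\, p(x)$, and using $p \not\equiv 0$, one obtains $\chi(g_1 g_2) = \chi(g_1)\chi(g_2)$. Hence $\chi$ is an algebraic character of $\G$, and $p$ is $\G$-equivariant with character $\chi$. The one subtle input is Proposition \ref{prop:Invert} itself, whose applicability requires both factors to be irreducible; this is precisely where the connectedness of $\G$ enters. I do not foresee any further obstacle.
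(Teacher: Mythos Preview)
Your proof is correct and follows essentially the same route as the paper: both apply Proposition~\ref{prop:Invert} to factor a unit on a product with $\G$, then evaluate at $g=e$ to identify the second factor with $p$ and conclude that the $\G$-factor is a character. The only difference is cosmetic: the paper forms the ratio $p^g/p$ and applies Proposition~\ref{prop:Invert} on $\G\times\X$ (after noting that the divisor of $p$ is $\G$-invariant so the ratio is a genuine unit on $\X$), whereas you restrict to $U=\X_p$ and apply the proposition to $p^g$ on $\G\times U$, then extend the resulting identity by density. Your variant has the small advantage of bypassing the divisor-invariance step entirely.
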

\begin{proof}
Note that the divisor corresponding to $p$ is $\G$-invariant, and thus the ratio $p^g/p$ is a nowhere vanishing regular function on $\X$, where $g\in \G$ and $p^g$ denotes the $g$-shift of $p$. Varying $g$ we obtain a function $f\in \cO^{\times}(\G \times \X)$. By Proposition \ref{prop:Invert}, there exist $a \in \cO^{\times}(\G)$ and $b\in \cO^{\times}(\X)$ such that $f(g,x)=a(g)b(x)$.
Substituting the neutral element of $\G$ we get that $b$ is a scalar. We can choose this scalar to be 1. Note that that $a$ becomes an algebraic character of $\G$ and that $p$ changes under the action of $\G$ by this character.
\end{proof}
}

\DimaN{
\begin{prop}[\cite{KKLV}]\label{prop:QP}
Let a connected linear algebraic $F$-group $\mathbf{G}$ act on a normal quasi-projective $F$-variety  $\mathbf{X}$.
Then there exists an algebraic (finite-dimensional) $F$-representation $\mathbf{W}$ of $\mathbf{G}$ and a (locally closed) $\G$-equivariant embedding of $\X$ into the projective space $\mathbb{P}(\W)$.
\end{prop}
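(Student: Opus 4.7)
The plan is to reduce the statement to the Mumford--Sumihiro linearization theorem for line bundles on normal $\mathbf{G}$-varieties, which is essentially the content of \cite{KKLV} that the proposition cites. Since $\mathbf{X}$ is quasi-projective over $F$, I would first choose a locally closed $F$-embedding $\mathbf{X} \hookrightarrow \mathbb{P}^N_F$ for some $N$, and let $\mathcal{L}$ denote the restriction of $\mathcal{O}(1)$ to $\mathbf{X}$. Then $\mathcal{L}$ is a very ample line bundle on $\mathbf{X}$ defined over $F$, although at this stage it carries no $\mathbf{G}$-equivariant structure.

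The second step is to promote a suitable power of $\mathcal{L}$ to a $\mathbf{G}$-linearized line bundle. The classical argument proceeds by examining the map $g \mapsto g^{*}\mathcal{L}\otimes\mathcal{L}^{-1}$ from $\mathbf{G}$ into the Picard scheme of $\mathbf{X}$; normality of $\mathbf{X}$ is used here to guarantee that the obstruction class to $\mathbf{G}$-linearization lies in a torsion subgroup, and connectedness of $\mathbf{G}$ lets one replace $\mathcal{L}$ by $\mathcal{L}^{\otimes n}$ for some $n \geq 1$ that kills the obstruction. The $F$-rational version is then obtained by performing this construction over $\overline{F}$ and invoking Galois descent, using that $\mathcal{L}$ itself is $F$-rational.

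Once $\mathcal{L}^{\otimes n}$ is $\mathbf{G}$-linearized over $F$, the space $\Gamma(\mathbf{X}, \mathcal{L}^{\otimes n})$ becomes a (possibly infinite-dimensional) algebraic representation of $\mathbf{G}$ over $F$, and is therefore a union of finite-dimensional $F$-rational $\mathbf{G}$-subrepresentations. Since $\mathcal{L}^{\otimes n}$ is still very ample, finitely many sections $s_0, \ldots, s_M \in \Gamma(\mathbf{X}, \mathcal{L}^{\otimes n})$ already define a locally closed immersion $\mathbf{X} \hookrightarrow \mathbb{P}^M_F$. I would then pick a finite-dimensional $F$-rational $\mathbf{G}$-subrepresentation of $\Gamma(\mathbf{X}, \mathcal{L}^{\otimes n})$ containing all the $s_i$; taking $\mathbf{W}$ to be its linear dual, the evaluation morphism yields a $\mathbf{G}$-equivariant locally closed embedding $\mathbf{X} \hookrightarrow \mathbb{P}(\mathbf{W})$, as required.

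The main obstacle is the linearization step: without normality the obstruction class need not be torsion, so one cannot replace $\mathcal{L}$ by a power to achieve linearization, and the $F$-rational refinement demands additional care with Galois descent. Assuming this input, the rest of the argument is a standard application of very ampleness together with local finiteness of algebraic group actions on regular function spaces.
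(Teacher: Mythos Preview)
Your outline is correct and is precisely the standard argument from \cite{KKLV}: pick a very ample line bundle, use normality and connectedness to linearize a power of it, then extract a finite-dimensional $\mathbf{G}$-stable space of sections giving the equivariant projective embedding. The paper does not supply an independent proof but simply cites \cite{KKLV} and remarks that the argument there, written over an algebraically closed field, goes through over any field of characteristic zero --- so your proposal matches the intended approach exactly.
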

The proof in \cite{KKLV} is given for  algebraically closed fields, but the argument works over
 any field of zero characteristic.}

\subsection{Equivariant distributions on $l$-spaces}\label{subsec:pAdic}
For non-Archimedean $F$ we will consider distributions on  $l$-spaces, {\it i.e.} locally compact totally disconnected Hausdorff topological spaces. This generality includes $F$-points of algebraic varieties defined over $F$ (see \cite{BZ}). For an $l$-space $X$, the space $\Sc(X)$ of test functions consists of locally constant compactly supported functions and the space of distributions is $\Sc^*(X)$,  the full linear dual. All distributions on $l$-spaces are tempered.
In this subsection we assume that $F$ is non-Archimedean and let $\bf G$ be a linear algebraic group defined over $F$. Let $\chi$ be a continuous character of $G$. A generalized $\chi$-equivariant distribution on X is defined to be a generalized invariant
vector in the representation $\Hom_{\C}(\Sc(X), \chi)$ of $G$. Our main tool in the  non-Archimedean case is the following special case of \cite[Theorem 1.5]{HS}.

\begin{thm}\label{thm:padic}
Let $\X$ be an algebraic
variety defined over $F$ such that $\bf G$ acts algebraically on $\X$ with an open orbit $\bf U \subset \X$. Assume
that there is a $(\G,\psi)$-equivariant regular function $f$ on $\X$ (for some character $\psi$ of $\G$) such that $${\bf U}=\X_f=\{x\in \X \text{ with }f(x)\neq 0\}.$$ Assume also that $\chi$ is trivial on the $F$-points of the unipotent
radical of $\G$. Then every generalized $\chi$-equivariant distribution $\xi$ on ${\bf U}(F)$ extends to a
generalized $\chi$-equivariant distribution on $\X(F)$.
Moreover, there exists a meromorphic family $\eta_{\lam}$ of $(G,\chi|\psi|^{\lam})$-equivariant distributions such that the constant term $\eta_0$ of this family at 0 extends $\xi$.
\end{thm}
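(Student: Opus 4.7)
The plan is to adapt Bernstein's analytic continuation argument (parallel to Lemma \ref{lem:key} in the Archimedean case) to the non-Archimedean setting, following \cite{HS}. The core idea is to construct the meromorphic family $\eta_\lambda := |f|^{\lambda} \cdot \xi$, extended by zero from $\bfU(F)$ to $\X(F)$, and then take its constant term at $\lambda = 0$.

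First I would define the family for $\re \lambda$ large. Since $\xi$ is a distribution on the open set $\bfU(F)$ and $|f|^{\lambda}$ is a locally constant non-vanishing function there, the product $|f|^{\lambda} \xi$ is a well-defined distribution on $\bfU(F)$. When $\re \lambda$ is sufficiently large, $|f|^{\lambda}$ decays fast enough along $\X(F) \setminus \bfU(F)$ (where $f$ vanishes) for the extension by zero $\eta_\lambda$ to give a well-defined distribution on $\X(F)$; the $\G$-equivariance of $f$ and the generalized $\chi$-equivariance of $\xi$ together imply that $\eta_\lambda$ is generalized $(G, \chi |\psi|^{\lambda})$-equivariant.

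Second, and this is where the $p$-adic case diverges from the Archimedean one, I would establish meromorphic continuation of $\lambda \mapsto \eta_\lambda(\phi)$ for each $\phi \in \Sc(\X(F))$. In the $p$-adic setting, ``meromorphic'' means a rational function in $q^{-\lambda}$, and the tool replacing Bernstein's holonomic $D$-module machinery is Igusa's theory of local zeta functions, based on resolution of singularities: after pulling back along a suitable resolution of the zero locus of $f$, one reduces to the monomial case, where the pairing $\eta_\lambda(\phi)$ is explicitly a rational function in $q^{-\lambda}$ with controlled denominators. The assumption that $\chi$ is trivial on the unipotent radical of $\G$ enters crucially here: it guarantees that the space of generalized equivariant distributions on relevant formal neighborhoods is governed by finite-dimensional representations of the reductive quotient, so that an analog of Lemma \ref{lem:Spec} gives finite-dimensionality of the $\C((\lambda))$-span of the family, preventing accumulations of poles and ensuring the continuation.

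Finally, I would extract the constant term. By the analog of Lemma \ref{lem:terms}, the constant term $\eta_0$ of the Laurent expansion of $\eta_\lambda$ at $\lambda = 0$ is generalized $(G, \chi)$-equivariant. On $\bfU(F)$, the restriction $\eta_\lambda|_{\bfU(F)} = |f|^\lambda \xi$ is already an entire function of $\lambda$ with value $\xi$ at $\lambda = 0$, so $\eta_0|_{\bfU(F)} = \xi$, proving the extension. The principal obstacle is the meromorphic continuation step: unlike the real case where Bernstein's general theorem provides an essentially formal argument, here one must combine a resolution-of-singularities analysis with representation-theoretic finiteness for reductive $p$-adic groups, and it is precisely this finiteness that forces the hypothesis on the unipotent radical.
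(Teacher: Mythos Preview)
The paper does not prove Theorem~\ref{thm:padic}; it is quoted as a special case of \cite[Theorem~1.5]{HS}, with the ``moreover'' clause attributed to \cite[Propositions~5.20 and~6.22]{HS}. So there is nothing in the paper itself to compare your sketch against---you are in effect outlining the Hong--Sun argument.

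Your overall architecture (build $\eta_\lambda$, continue rationally in $q^{-\lambda}$, take the constant term at $\lambda=0$) is the right shape and is indeed the strategy of \cite{HS}. The gap is in the justification of your first step. The phrase ``$|f|^\lambda$ decays fast enough for the extension by zero to be well-defined'' is Archimedean reasoning with no direct $p$-adic analog: on an $l$-space there is no order-of-distribution bound to exploit, and for $\phi\in\Sc(\X(F))$ the product $|f|^\lambda\phi$ lies neither in $\Sc(\X(F))$ nor in $\Sc({\bf U}(F))$, since it is not locally constant across the zero set of $f$ and does not have compact support inside ${\bf U}(F)$. One must instead interpret $\eta_\lambda(\phi)$ as the series $\sum_{n}q^{-n\lambda}\,\xi\bigl(\phi\cdot\mathbf 1_{\{|f|=q^{-n}\}}\bigr)$ (or, after resolution, as a combination of such geometric series), and convergence of this for $\re\lambda\gg 0$ is \emph{not} automatic for an arbitrary distribution $\xi$ on ${\bf U}(F)$: it already requires the equivariance of $\xi$ on the homogeneous space ${\bf U}$. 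In short, the divergence from the Archimedean proof begins at step one, not step two. Your account of step two (Igusa-type resolution, representation-theoretic finiteness forcing the unipotent-radical hypothesis) and of step three is otherwise accurate.
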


The ``moreover" part is not formulated in \cite[Theorem 1.5]{HS} but rather follows from the proof. More precisely,  it follows from \cite[Propositions 5.20 and 6.22]{HS}.

\DimaJ{
\subsection{\Fre Spaces, their duals and tensor products}
All the topological vector spaces considered in this paper will be either \Fre  or dual \Fre. For a \Fre  space $V$, $V^*$ will denote the strong dual, and for a \Fre or dual \Fre space $W$, $V\hot W$ will denote the completed projective tensor product and $L_b(V,W)$ will denote the space of bounded linear operators from $V$ to $W$ (see \cite[\S 32]{Tre}). The projective topology on $V\hot W$ is generated by seminorms which are largest cross-norms corresponding to pairs of generating semi-norms on $V$ and $W$, see \cite[\S 43]{Tre}. In particular, if $V$ and $W$ are \Fre  spaces, then so is $V\hot W$. If $V$ (or $W$) is nuclear then the projective tensor product is naturally isomorphic to the injective one, see \cite[Theorem 50.1]{Tre}. This is the case in all our theorems. We will need the next proposition, which follows from \cite[Proposition 50.5 and (50.19)]{Tre}
\begin{prop}\label{prop:TensorDual}
Let $V$ and $W$ be \Fre  spaces, with $V$ nuclear. Then we have natural isomorphisms
$$(V\hot W)^*\cong V^*\hot W^*\cong L_b(V,W^{*}).$$
\end{prop}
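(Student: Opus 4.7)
The plan is to derive both isomorphisms from standard facts about tensor products of \Fre spaces, essentially following the approach of \cite{Tre}. I would treat the two isomorphisms in turn.

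For the first isomorphism $(V \hot W)^* \cong L_b(V,W^*)$, I would invoke the universal property of the projective tensor product. For any \Fre spaces $V$ and $W$, continuous linear functionals on $V \hot W$ are in canonical bijection with continuous bilinear forms on $V \times W$; this is built into the definition of the projective topology (the largest cross-norm topology), and is the content of \cite[Proposition 50.5]{Tre}. Currying a continuous bilinear form $V \times W \to \C$ yields a continuous linear map $V \to W^*$, and conversely. To check that this identification is topological (and not just algebraic) one checks that the natural seminorms agree on both sides, which again follows from the projective cross-norm description. No nuclearity is needed for this step.

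For the second isomorphism $V^* \hot W^* \cong L_b(V,W^*)$, nuclearity enters crucially. There is an obvious bilinear pairing $V^* \times W^* \to L_b(V,W^*)$ sending $(\phi,\psi)$ to the rank-one operator $v \mapsto \phi(v)\,\psi$; by the universal property this factors through a continuous linear map from $V^* \hot W^*$ to $L_b(V,W^*)$. The content of \cite[(50.19)]{Tre} is that, when $V$ is a nuclear \Fre space, this map is a topological isomorphism. This is essentially the kernel theorem for nuclear \Fre spaces: every bounded operator $V \to W^*$ is of "nuclear" form and is represented by an element of the completed projective tensor product. Since $V$ is nuclear, the projective and injective tensor products of its dual with $W^*$ coincide, so there is no ambiguity in writing $V^* \hot W^*$.

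The main obstacle, and the reason nuclearity is essential, lies in the second step: without nuclearity of $V$, the image of $V^* \hot W^*$ in $L_b(V,W^*)$ consists only of the subspace of "nuclear" operators and does not fill out all of $L_b(V,W^*)$. Nuclearity is precisely the condition that every bounded operator admits a representation as a convergent sum $\sum_i \phi_i \otimes \psi_i$ with controlled norms, which makes the map a topological isomorphism. Once this is established, combining the two isomorphisms gives the proposition.
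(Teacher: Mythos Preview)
Your proposal is correct and matches the paper's approach exactly: the paper does not give an independent proof but simply records that the proposition follows from \cite[Proposition 50.5 and (50.19)]{Tre}, which are precisely the two results you invoke and explain. Your write-up is just a fleshed-out version of that citation.
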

}
\DimaM{
We will also use the following lemma.
\begin{lem}[{\cite[Lemma A.3]{CHM}}]\label{lem:flat}
Let $W$ be a nuclear \Fre space and let
$$0\to V_1\to V_2\to V_3\to 0$$
be a short exact sequence of  nuclear \Fre spaces. Then the sequence
$$0\to V_1\hot W\to V_2 \hot W \to V_3\hot W\to 0$$
is also a short exact sequence.
\end{lem}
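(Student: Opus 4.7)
The plan is to split the proof into three verifications — injectivity on the left, surjectivity on the right, and exactness in the middle — with the nuclearity of $W$ being the essential ingredient that ties them together via the identification of the completed projective and injective tensor products (\cite[Theorem 50.1]{Tre}).

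For surjectivity of $V_2 \hot W \to V_3 \hot W$, I would invoke the right-exactness of the projective tensor product. Every element of $V_3 \hot W$ admits a representation $\sum_n \lambda_n x_n \otimes w_n$ with $(\lambda_n) \in \ell^1$ and $x_n \to 0$ in $V_3$, $w_n \to 0$ in $W$ (the standard description of $\hot$). Since $V_2 \to V_3$ is a strict surjection of \Fre spaces, the open mapping theorem permits us to lift $(x_n)$ to a null sequence $(y_n)$ in $V_2$, yielding $\sum_n \lambda_n y_n \otimes w_n \in V_2 \hot W$ as the desired preimage.

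For injectivity of $V_1 \hot W \to V_2 \hot W$, I would exploit nuclearity. By \cite[Theorem 50.1]{Tre}, since $W$ is nuclear, $V_i \hot W$ coincides with the injective completed tensor product $V_i \check{\otimes} W$ for each $i$. The injective tensor product is hereditary with respect to topologically embedded subspaces: the injective topology on $V_1 \otimes W$ inherited from $V_2 \check{\otimes} W$ agrees with its intrinsic injective topology (this is a standard consequence of the Hahn--Banach theorem applied to the defining seminorms). Passing to completions gives the required injection.

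The main obstacle is middle exactness, i.e., showing that the kernel of $V_2 \hot W \to V_3 \hot W$ equals the image of $V_1 \hot W$. One containment is immediate. For the reverse, I would dualize via Proposition \ref{prop:TensorDual}: the strong dual of $V_i \hot W$ identifies with $L_b(V_i, W^*)$. An element $u$ in the kernel annihilates every continuous operator $V_2 \to W^*$ that factors through $V_3$, equivalently every operator $V_2 \to W^*$ vanishing on $V_1$. A Hahn--Banach extension argument for such operator-valued functionals places $u$ in the weak closure, and then the closure in the original topology, of the image of $V_1 \hot W$ in $V_2 \hot W$. The remaining step — closedness of this image — is the essential consequence of nuclearity and the technical core of the argument; it is precisely this closedness which fails for non-nuclear $W$, so here I would explicitly invoke the regularity properties of nuclear \Fre spaces (presentation as a projective limit of Banach spaces with nuclear transition maps, which ensures compatibility of completions under the quotient).
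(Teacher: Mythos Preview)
The paper does not actually prove this lemma; it is simply quoted from \cite[Lemma A.3]{CHM}. So there is no ``paper's own proof'' to compare against, and your outline must be judged on its own merits.

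Your argument is essentially correct, but you are making the middle-exactness step look harder than it is. You flag ``closedness of the image of $V_1\hot W$ in $V_2\hot W$'' as the remaining technical core and gesture toward projective limits with nuclear linking maps. In fact this closedness already drops out of your injectivity argument. You showed (via $\hot=\check\otimes$ and the Hahn--Banach description of the $\varepsilon$-seminorms) that the intrinsic topology on $V_1\otimes W$ coincides with the one induced from $V_2\check\otimes W$. That means the map $V_1\hot W\to V_2\hot W$ is a topological embedding, not merely an injection; since $V_1\hot W$ is complete (it is \Fre), its image is closed. Your bipolar/Hahn--Banach argument then finishes middle exactness cleanly, with no further appeal to structural properties of nuclear spaces. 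So the proof is fine once you notice that your second paragraph already supplies what your fourth paragraph is looking for.

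One small point of care in the surjectivity paragraph: lifting a null sequence $(x_n)$ in $V_3$ to a null sequence $(y_n)$ in $V_2$ is correct but is not literally the open mapping theorem; you need the standard consequence that for each continuous seminorm $p$ on $V_2$ there is a seminorm $q$ on $V_3$ with $\inf\{p(y):\pi(y)=x\}\le q(x)$, and then a diagonal selection. This is routine, but worth stating rather than leaving implicit.
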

}


\subsection{Schwartz functions and tempered distributions on real algebraic manifolds}\label{subsec:Sch}
Let ${\bf X}$ be an  algebraic manifold ({\it i.e.} smooth algebraic variety) defined over $\R$ and $X:={\bf X}(\R)$. If $\bf X$ is affine then the \Fre  space $\Sc(X)$ of
Schwartz functions on $X$ consists of smooth complex valued functions that decay, together with all their derivatives,
faster than any polynomial. This is a \DimaD{nuclear} \Fre  space, with the topology given by the system of semi-norms $|\phi|_{d}:=\max_{x\in X}|df|$, where $d$ runs through  all differential operators on $X$ with polynomial coefficients.

For a Zariski open affine subset ${\bf U}\subset {\bf X}$, the extension by zero of a Schwartz function on $U$ is a Schwartz function on $X$. This enables to define the Schwartz space on any algebraic manifold ${\bf X}$, as the sum of the Schwartz spaces of the open affine pieces, extended by zero to functions on $X$.
For the precise definition of this notion see e.g. \cite{AGSc}. Elements of the dual space $\Sc^*(X)$ are called tempered distributions. The spaces $\Sc^*(U)$ for all Zariski open ${\bf U}\subset {\bf X}$ form a sheaf. We say that a measure is tempered if it defines a tempered distribution.

\DimaB{In a similar way one can define the space $\Sc(X,V)$ of $V$-valued Schwartz functions for any  \Fre space $V$.
Namely, for an affine $X$ we demand that $q(d\phi(x))$ is  bounded for any differential operator  $d$ on $X$ and any seminorm $q$ on $V$.
\DimaD{It is easy to see that 
$\Sc(X,V)\cong \Sc(X)\hot V$, where $\hot$ denotes the completed \DimaI{projective} tensor product (cf. \cite[Theorem 51.6]{Tre}}).
We  define the  tempered distributions $\Sc^*(X,V)$ to be the continuous linear dual space.
\DimaJ{Note that by Proposition \ref{prop:TensorDual} we have
\begin{equation}\label{=dist}
\Sc^*(X,V)\cong \Sc^*(X)\hot V^*\cong L_b(\Sc(X),V^*)
\end{equation}

\begin{lem}\label{lem:Plam}
Let $p$ be a polynomial on $\X$ with real coefficients that takes non-negative values on $X$. For any $\lam \in \C$ with $\re \lam >0$ consider the function $p^{\lam}(x)=p(x)^{\lam}$ on $X$.
Let $V$ be a \Fre  space and let $\xi \in \Sc^*(X,V)$.
Then there exists a natural number $N$ such that for any $\lam$ with $\re\lam>N$  the distribution $\xi p^{\lam}$ is well defined.
\end{lem}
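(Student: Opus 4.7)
The plan is to interpret $\xi p^{\lam}$ as the functional $\phi\mapsto \xi(p^{\lam}\phi)$ on $\Sc(X,V)$ and to show it is well defined and continuous once $\re \lam$ is large enough that $p^{\lam}\phi$ remains differentiable to the order at which $\xi$ is controlled.

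First I would exploit the continuity of $\xi$ on the \Fre space $\Sc(X,V)\cong \Sc(X)\hot V$ to select a constant $C$, a finite collection of differential operators $d_1,\dots,d_k$ on $\X$ with polynomial coefficients, and a finite collection of continuous seminorms $q_1,\dots,q_k$ on $V$ such that
$$|\xi(\phi)|\leq C\sum_{i=1}^k \sup_{x\in X} q_i\bigl(d_i\phi(x)\bigr)$$
for all $\phi\in \Sc(X,V)$. Let $N$ be the maximum of the orders of the $d_i$, and let $\widehat{\Sc}$ denote the completion of $\Sc(X,V)$ in the coarser topology defined by these finitely many seminorms; then $\xi$ extends by continuity to $\widehat{\Sc}$.

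Next I would verify that for $\re\lam>N$ the function $p^{\lam}$, defined by continuous extension as $0$ on the zero locus of $p$, lies in $C^N(X)$ and that each of its partial derivatives up to order $N$ is polynomially bounded. This is a direct application of the Leibniz/chain rules: for a multi-index $\alpha$ with $|\alpha|\leq N$, $\partial^{\alpha}p^{\lam}$ is a polynomial expression in $\lam$ whose typical summand has the shape $P(x)\cdot p^{\lam-j}$ with $0\leq j\leq|\alpha|\leq N<\re\lam$ and $P$ a polynomial. Since $\re(\lam-j)>0$, one has $|p^{\lam-j}|=p^{\re\lam-j}$, a continuous nonnegative function on $X$ vanishing on $\{p=0\}$ and dominated by a polynomial, so $\partial^{\alpha}p^{\lam}$ is continuous on $X$ and of polynomial growth.

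Applying Leibniz once more to $d_i(p^{\lam}\phi)$, each of the seminorms $\sup_x q_i\bigl(d_i(p^{\lam}\phi)(x)\bigr)$ is dominated by a finite sum of expressions of the form $\sup_x q_i\bigl(D_{i,j}\phi(x)\bigr)$ for differential operators $D_{i,j}$ on $\X$ with polynomial coefficients (the coefficients depending on $\lam$). These are among the defining continuous seminorms on $\Sc(X,V)$, so multiplication by $p^{\lam}$ defines a continuous map $\Sc(X,V)\to \widehat{\Sc}$. Composing with the extension of $\xi$ yields a continuous linear functional on $\Sc(X,V)$, which by definition is the sought-for element $\xi p^{\lam}\in \Sc^*(X,V)$.

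The only delicate point is the derivative bookkeeping in the second step: checking existence and polynomial boundedness of $\partial^{\alpha}p^{\lam}$ at points where $p$ vanishes. Away from the zero set this is obvious, and on the zero set it reduces to the elementary fact that for $\re\mu>0$ the function $p^{\mu}$ (extended by zero) is continuous, since $|p^{\mu}|=p^{\re\mu}$. Once this is in hand the rest is routine.
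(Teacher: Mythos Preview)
Your argument is correct and follows essentially the same route as the paper: bound $\xi$ by finitely many seminorms of the form $\sup_x q_i(d_i\phi(x))$, let $N$ be the maximal order of the $d_i$, and observe that $p^{\lam}\in C^N$ with polynomially bounded derivatives once $\re\lam>N$. The paper's version is terser and begins with an explicit reduction to the affine case (which you use implicitly when invoking the seminorms given by polynomial-coefficient differential operators), but otherwise the two proofs coincide.
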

\begin{proof}
First of all, we can assume that $\X$ is affine.
By definition, $\xi$ is a functional on $\Sc(X)\hot V$ bounded by a finite combination of semi-norms of the form $q_i\times r_i$, where $r_i$ are semi-norms on $V$ and  $q_i$ are semi-norms on $\Sc(X)$ given by $q_i(f)= \max_{x\in X}|d_i(f)|$, where $d_i$ are some differential operators on $\X$. We let $N$\ be the maximum of the orders of $d_i$. Then for any $\lam$ with $\re \lam >N$ the function $p^{\lam}$ has continuous derivatives up to order $N$, and thus $\xi p^{\lam}$ is well defined.
\end{proof}

}
If a group $G$ acts on $X$ and on $V$ then we consider the diagonal action on $\Sc(X,V)$ and the dual action on $\Sc^*(X,V)$. We denote the space of invariants in $\Sc^*(X,V)$ by $\Sc^*(X,V)^G$. We call the elements of this space \emph{equivariant distributions}.}


Let ${\bf U}\subset {\bf X}$ be a Zariski open subset\DimaI{, write $U:={\bf U}\cap X$} and let $Z$ denote the complement to $U$ in $X$.

\begin{thm}[{\cite[ Theorem 4.6.1 and \S 5.3]{AGSc}}]
\DimaD{Assume $Z$ is smooth. Then }the restriction to $Z$ defines an epimorphism $\Sc(X)\onto \Sc(Z)$.
\end{thm}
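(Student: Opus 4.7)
The plan is to reduce the statement to the case of an affine ambient variety and then construct an explicit right inverse to restriction using a Nash (semi-algebraic) tubular neighborhood of $Z$ in $X$.

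For the reduction, cover ${\bf X}$ by finitely many Zariski open affine subvarieties ${\bf X}_1,\dots,{\bf X}_n$ defined over $\R$. By the definition of $\Sc(X)$ recalled in \S \ref{subsec:Sch} (as a sum of Schwartz spaces extended by zero from affine pieces), any $f\in\Sc(Z)$ decomposes as $f=\sum_i f_i$ with $f_i\in\Sc(Z_i)$, where $Z_i:=Z\cap X_i$. If the theorem is established in the affine case, each $f_i$ lifts to some $F_i\in\Sc(X_i)$, and extending by zero and summing yields an extension $F\in\Sc(X)$ of $f$. Thus we may assume ${\bf X}$ is affine, in which case ${\bf X}\setminus{\bf U}$ is affine too, and $Z$ sits inside the affine Nash manifold $X$ as a closed Nash submanifold.

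Next, I would fix a closed Nash embedding $X\hookrightarrow\R^N$ and invoke a Nash tubular neighborhood theorem for $Z$ in $X$: there exist a semi-algebraic open neighborhood $V$ of $Z$, a Nash retraction $\pi\colon V\to Z$, and a Nash cutoff $\chi$ on $X$ supported inside $V$ and equal to $1$ on a smaller semi-algebraic neighborhood of $Z$. The extension operator defined by
\[
E(f)(x):=\chi(x)\,f(\pi(x)) \text{ for } x\in V, \qquad E(f)(x):=0 \text{ for } x\notin V,
\]
is then well defined, smooth on $X$, and satisfies $E(f)|_Z=f$, so the problem reduces to showing that $E$ maps $\Sc(Z)$ into $\Sc(X)$.

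The main technical obstacle is precisely this Schwartz bound. A typical seminorm $\max_X |d\,E(f)|$ for a polynomial-coefficient differential operator $d$ on ${\bf X}$ expands by the Leibniz and chain rules into a finite sum of products of polynomials in $x$, derivatives of $\chi$, derivatives of $\pi$, and derivatives of $f$ evaluated at $\pi(x)$. Because $\chi$ and $\pi$ are Nash, their partial derivatives are semi-algebraic and hence grow at most polynomially in the ambient coordinates on $\R^N$; the same holds for $\|\pi(x)\|$ on the support of $\chi$. Each such term is therefore dominated by a polynomial in $\|x\|$ times a Schwartz seminorm of $f$ on $Z$, producing the required global estimate. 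The delicate point, carried out in \cite{AGSc}, \S 4--5, is arranging the Nash data $V$, $\pi$, $\chi$ so that these polynomial bounds are uniform in $x$ across all of $X$ rather than merely local near a chosen point of $Z$.
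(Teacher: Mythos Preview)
The paper does not give its own proof of this theorem; it is quoted from \cite[Theorem 4.6.1 and \S 5.3]{AGSc}, so there is nothing in the present paper to compare your argument against. Your sketch does follow the broad outline of the argument in \cite{AGSc}: reduce to the affine case, build a Nash tubular neighborhood of $Z$ in $X$, and push functions out along a retraction multiplied by a cutoff.

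There is, however, a genuine gap in your construction. You posit a ``Nash cutoff $\chi$'' that is identically $1$ near $Z$ and supported in a tube $V$, and then argue that its derivatives are semi-algebraic and hence polynomially bounded. But Nash functions are real-analytic, so a nonzero Nash function cannot be compactly supported (or even constant on an open set without being globally constant); no such $\chi$ exists. The cutoff in \cite{AGSc} is not Nash: it is a \emph{tempered} smooth function, i.e.\ one all of whose derivatives are polynomially bounded. Producing such cutoffs (and, more generally, tempered partitions of unity subordinate to semi-algebraic covers) is a separate technical input of \cite{AGSc}, and it is exactly what makes the Schwartz estimate go through. Once you replace ``Nash cutoff'' by ``tempered cutoff'' and invoke that construction, your outline matches the cited proof; as written, the step ``derivatives of $\chi$ are semi-algebraic'' is false and the estimate is unjustified.

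A smaller point: your reduction to the affine case uses that any $f\in\Sc(Z)$ splits as $\sum_i f_i$ with $f_i\in\Sc(Z_i)$. This is true, but it is itself a nontrivial fact about Schwartz spaces on Nash manifolds (again relying on tempered partitions of unity), so you should flag it as an input from \cite{AGSc} rather than treat it as immediate from the definition.
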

Dualizing the map $\Sc(X)\onto \Sc(Z)$ we obtain an embedding $\Sc^*(Z)\into \Sc^*(X)$. We call this map extension of distributions by zero.

\begin{thm}[{\cite[ Theorem 5.4.3]{AGSc}}] \label{pOpenSet}
We have
$$\Sc(U) \cong \{\phi \in \Sc(X)| \quad \phi \text{ is 0 on } Z \text{ with all derivatives} \}.$$
In particular, extension by zero defines a closed imbedding $\Sc(U) \into \Sc(X)$.
\end{thm}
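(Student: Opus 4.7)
The plan is to establish an isomorphism between $\Sc(U)$ (via extension by zero) and the subspace $\Sc_Z(X) := \{\phi \in \Sc(X) : \phi \text{ vanishes on } Z \text{ together with all derivatives}\} \subset \Sc(X)$. The closed-embedding assertion then follows automatically, since $\Sc_Z(X)$ is the intersection, over $z \in Z$ and all differential operators $d$, of the kernels of the continuous evaluation functionals $\phi \mapsto (d\phi)(z)$, and is therefore closed in $\Sc(X)$.

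First I would reduce to the affine case. By the definition of $\Sc(X)$ as a sum of Schwartz spaces over a Zariski-open affine cover, it suffices to prove the statement after intersecting with each member of such a cover. A further refinement by basic open subsets reduces to the setting in which $\mathbf{X}$ is affine and $\mathbf{U} = \mathbf{X}_p$ for a single polynomial $p \in \cO(\mathbf{X})$; replacing $p$ by $p\bar p$, we may additionally assume $p \geq 0$ on $X$ and that $Z$ equals $\{p = 0\}$ set-theoretically. In this situation $\cO(\mathbf{U}) = \cO(\mathbf{X})[p^{-1}]$, so the seminorms defining $\Sc(U)$ are generated by $\phi \mapsto \sup_{U} |p^{-k}\, d\phi|$ with $d$ a polynomial-coefficient differential operator on $\mathbf{X}$ and $k \geq 0$.

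With this description, the forward direction is routine: the bounds $|d\phi(x)| \leq C_{d,k}|p(x)|^k$ valid on $U$ for $\phi \in \Sc(U)$ force the extension by zero to be smooth with vanishing jet along $Z$, and the same bounds show the extension lies in $\Sc_Z(X)$. The reverse direction is the crux: given $\phi \in \Sc_Z(X)$, one must show $|p|^{-k}|d\phi|$ is bounded on $U$ for every $k$ and every polynomial-coefficient $d$. The key tool is a Lojasiewicz-type inequality for the real algebraic pair $(X,Z)$: on each compact $K \subset X$ there exist $c,\alpha > 0$ with $|p(x)|^\alpha \geq c \cdot \mathrm{dist}(x,Z)$ for $x \in K$. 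Combined with Taylor's theorem along $Z$ --- the infinite-order vanishing gives $|d\phi(x)| \leq C_{N,K}\cdot \mathrm{dist}(x,Z)^N$ for every $N$ on $K$ --- this yields the desired bound locally on compacts. To globalize, I would split $X$ into a compact core and its complement, using the rapid Schwartz decay of $\phi$ combined with a Lojasiewicz-type lower bound on $|p|$ at infinity to control $|p|^{-k}$ outside the core.

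The hard step is therefore the Lojasiewicz comparison of $|p|$ with $\mathrm{dist}(\cdot, Z)$ together with its uniformization over all of $X$, including at infinity; these ingredients are available for real algebraic pairs via Hironaka's resolution or by direct semialgebraic arguments. Once they are in hand, the two maps are continuous mutual inverses, which delivers the asserted isomorphism $\Sc(U) \cong \Sc_Z(X)$ and hence the closed embedding $\Sc(U) \hookrightarrow \Sc(X)$.
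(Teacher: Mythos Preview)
The paper does not prove this theorem: it is quoted verbatim from \cite[Theorem 5.4.3]{AGSc} and no proof is given here. There is thus nothing in the present paper to compare your argument against.

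That said, your sketch is essentially the approach taken in \cite{AGSc} itself: reduce to the affine/basic-open case, identify the seminorms of $\Sc(U)$ with $\sup_U|p^{-k}d\phi|$, and then use a Lojasiewicz-type comparison between $|p|$ and the distance to $Z$ together with Taylor expansion to pass between infinite-order vanishing along $Z$ and finiteness of those seminorms. The one place where your outline is thin is the control ``at infinity'': you need a \emph{global} semialgebraic Lojasiewicz inequality of the form $|p(x)|\geq c\,(1+|x|)^{-N}\mathrm{dist}(x,Z)^{\alpha}$ (or an equivalent formulation after compactifying), not just the compact-set version, in order to beat the polynomial growth of $p^{-k}$ against the Schwartz decay of $\phi$. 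Such global inequalities are available in the Nash/semialgebraic category and are exactly what \cite{AGSc} invokes; once you make that precise, your argument goes through.
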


\begin{cor}
The restriction map $\Sc^*(X)\to \Sc^*(U)$ is onto.
\end{cor}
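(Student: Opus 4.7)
The plan is to derive this corollary directly from Theorem \ref{pOpenSet} by a Hahn--Banach argument, with no additional analytic input required. The first step is to identify the restriction map $\Sc^*(X)\to \Sc^*(U)$ explicitly as the transpose of the extension-by-zero map $e\colon \Sc(U)\to \Sc(X)$: for $\xi\in \Sc^*(X)$ and $\phi\in\Sc(U)$ one has $(\xi|_U)(\phi)=\xi(e(\phi))$. Thus surjectivity of the restriction is equivalent to the assertion that every continuous linear functional on the image $e(\Sc(U))\subset \Sc(X)$ extends to all of $\Sc(X)$.

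Next, I would invoke Theorem \ref{pOpenSet}, which states that $e$ is a closed topological embedding of \Fre spaces, so $e(\Sc(U))$ is a genuine closed linear subspace of $\Sc(X)$. Given any $\eta\in \Sc^*(U)$, I would view it through $e$ as a continuous linear functional on this closed subspace, and then apply the Hahn--Banach theorem for locally convex spaces (which applies in particular to continuous functionals on closed subspaces of \Fre spaces) to extend it to a continuous functional $\xi\in \Sc^*(X)$. By construction $\xi\circ e = \eta$, i.e.\ $\xi|_U=\eta$, proving surjectivity.

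I do not foresee any genuine obstacle here: all of the substantive work, namely that extension by zero realizes $\Sc(U)$ as a \emph{closed} subspace of $\Sc(X)$, is already contained in Theorem \ref{pOpenSet}; the remainder is a standard duality argument that would take only a couple of lines to write out.
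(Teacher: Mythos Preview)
Your argument is correct and is precisely the standard duality deduction the paper has in mind: the corollary is stated without proof immediately after Theorem \ref{pOpenSet}, so the intended reasoning is exactly the Hahn--Banach extension from the closed subspace $e(\Sc(U))\subset \Sc(X)$ that you describe.
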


\begin{rem}
Note that this corollary does not hold for arbitrary distributions. For example, the distribution $e^xdx$ does not extend from $\R$ to $\R P^1$. Indeed,  since $\R P^1$ is compact, any distribution on it is tempered and therefore restricts to a tempered distribution on $\R$.
\end{rem}

Let us record one more corollary of this theorem.

\begin{cor}\label{cor:ext}
Let $\bf G$ be an algebraic group defined over $\R$,  and $V$ be a
\DimaD{nuclear}
 \Fre space with a continuous \DimaD{linear} action  of $G:=\bf G(\R)$.
\DimaB{Let $V^*$ denote the continuous linear dual.}
Let $\bf G$ act on $\bf X$ algebraically, let $\bf U \subset \bf X$  be a $\bf G$-invariant Zariski open subset, \DimaI{write $U:={\bf U} \cap X$ and let $Z$ denote the complement to $U$ in $X$.} Let  $\xi\in \Sc^*(U,V)^{G}$. Then for some natural number $n$, $\xi$ extends to a $G$-intertwining operator $\xi':F^n(X,U) \to V^\DimaA{*}$, where $F^n(X,U)$ is the space of Schwartz functions on $X$ that vanish on $Z$ with first $n$ derivatives.

Moreover, there exist $(n,\xi')$ as above such that for any differential operator $d$ on $\bf X$ with polynomial coefficients satisfying $\xi\circ d =0, \, \xi'$
vanishes on $d(F^{n+\deg d}(X,U))$, where $\deg d$ denotes the degree of $d$.
\end{cor}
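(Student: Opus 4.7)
The plan is to exploit continuity of $\xi$, viewed as an element of $(\Sc(U)\hot V)^*$, to bound it by a single product seminorm, and then to extend to $F^n(X,U)$ via a cut-off approximation procedure resting on a Hadamard--{\L}ojasiewicz-type estimate. After reducing to the case of affine $\X$, continuity of $\xi$ on the nuclear \Fre space $\Sc(U)\hot V$ yields seminorms $\nu_1$ on $\Sc(U)$ and $\nu_2$ on $V$ and a constant $C>0$ such that $|\xi(f\otimes v)|\le C\nu_1(f)\nu_2(v)$. As $\bf U$ is quasi-affine, one can take $\nu_1(f)=\sup_{x\in U}|D'(f)(x)|$ with $D'$ a polynomial differential operator on $\bf U$; writing $D'=q^{-k}D$ with $D$ a polynomial differential operator on $\X$ and $q$ a real polynomial on $\X$ non-vanishing on $U$ and vanishing precisely on $Z$ (for instance a sum of squares of real generators of the vanishing ideal of $\bf Z$), a Hadamard--{\L}ojasiewicz estimate produces an integer $n_0$ such that for all $n\ge n_0$ and all $f\in F^n(X,U)$ one has $\nu_1(f)\le C'\mu(f)$ for some Schwartz seminorm $\mu$ on $\Sc(X)$.

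Fix $n\ge n_0$, taken large enough for the moreover clause as well, and pick a smooth cut-off family $\phi_\eps$ on $X$ equal to $0$ where $q\le\eps/2$ and to $1$ where $q\ge\eps$; for instance $\phi_\eps(x)=\psi(q(x)/\eps)$ for a suitable $\psi\in C^\infty(\R_{\ge 0})$. For $f\in F^n(X,U)$ we have $\phi_\eps f\in\Sc(U)$, and the same power-count in the $\eps$-transition region gives $\nu_1(\phi_\eps f-f)\to 0$ as $\eps\to 0$. Hence $\xi(\phi_\eps f)$ is Cauchy in $V^*$, and I define $\xi'(f):=\lim_\eps\xi(\phi_\eps f)$; the limit is independent of the choice of cut-off and satisfies a bound of the form $|\langle\xi'(f),v\rangle|\le CC'\mu(f)\nu_2(v)$, so it defines a continuous linear $\xi':F^n(X,U)\to V^*$. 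For $G$-equivariance, one uses $\phi_\eps\cdot gf=g\bigl((g^{-1}\phi_\eps)\cdot f\bigr)$: since $G$ preserves $Z$, the family $g^{-1}\phi_\eps$ is again an admissible cut-off system and $(g^{-1}\phi_\eps)f\to f$ in $\nu_1$ by continuity of the $G$-action on $\Sc(X)$, so $\xi((g^{-1}\phi_\eps)f)\to\xi'(f)$ and $\xi'(gf)=g\xi'(f)$ after applying continuity of the $G$-action on $V^*$.

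For the moreover clause, given a polynomial differential operator $d$ on $\bf X$ with $\xi\circ d=0$ on $\Sc(U)$ and any $f\in F^{n+\deg d}(X,U)$, we have $df\in F^n(X,U)$ and $\phi_\eps\cdot df=d(\phi_\eps f)-[d,\phi_\eps]f$. The first term contributes zero under $\xi$, since $\phi_\eps f\in\Sc(U)$ and $\xi\circ d=0$, while $[d,\phi_\eps]f$ is supported in the $\eps$-transition region of $\phi_\eps$ and involves only positive-order derivatives of $\phi_\eps$ coupled with derivatives of $f$ of order at most $\deg d-1$; the extra $\deg d$ orders of vanishing of $f$ on $Z$ exactly compensate the up-to-$\deg d$ derivatives of $\phi_\eps$ entering the power count, giving $\nu_1([d,\phi_\eps]f)\to 0$ and hence $\xi'(df)=0$. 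The main technical obstacle is the Hadamard--{\L}ojasiewicz estimate of the first paragraph: this is classical when $Z$ is a smooth principal divisor, and in general reduces to that case via an affine cover and, if necessary, a resolution of singularities of $Z\subset\X$, or can be obtained directly from the Taylor expansion of $f\in F^n(X,U)$ along $Z$ combined with a {\L}ojasiewicz lower bound for $q$.
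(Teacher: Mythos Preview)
Your approach is plausible and, once the estimates are filled in, should yield the result, but it takes a genuinely different route from the paper. The paper's proof is purely functional-analytic and contains no cut-offs, no {\L}ojasiewicz bounds, and no commutator bookkeeping: it invokes Theorem~\ref{pOpenSet}, which identifies $\Sc(U)$ \emph{topologically} with the closed subspace of $\Sc(X)$ consisting of functions vanishing to infinite order on $Z$, writes $\Sc(X)=\varprojlim_n\Sc^n(X)$, and observes that a continuous functional on $\Sc(U)\hot V=\bigcap_n\bigl(\Sc^n(X,U)\hot V\bigr)$ is bounded by finitely many seminorms and hence factors through some $\Sc^n(X,U)\hot V$. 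Equivariance and the moreover clause then follow in one line each from density of $\Sc(U)$ in $\Sc^n(X,U)$ (resp.\ $\Sc^{n+\deg d}(X,U)$) together with continuity of $d:\Sc^{n+\deg d}(X,U)\to\Sc^n(X,U)$. In effect, all the hard analysis you do by hand (the Hadamard--{\L}ojasiewicz estimate, the cut-off approximation, the commutator power-count) is exactly what goes into proving that Theorem~\ref{pOpenSet} is a topological isomorphism and that $\Sc(U)$ is dense in each $\Sc^n(X,U)$; the paper simply quotes \cite{AGSc} for that package. What your route buys is independence from \cite{AGSc} and an explicit formula $\xi'(f)=\lim_{\eps}\xi(\phi_\eps f)$; what the paper's route buys is a five-line proof with no estimates. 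One caution on your side: writing a general Schwartz seminorm on $\Sc(U)$ as $\sup_U|q^{-k}Df|$ with a single $q$ tacitly assumes $\mathbf{U}$ is a principal open; for arbitrary Zariski open $\mathbf{U}$ the Schwartz topology is glued from an affine cover, so either reduce to that case or work cover-wise before running the cut-off argument.
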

\begin{proof}
We can assume that $\bf X$ is affine. Then the \Fre space $\Sc(X)$ is the inverse limit of the \Fre spaces $\Sc^n(X)$ consisting of $n$ times differentiable functions that decay rapidly at infinity together with their first $n$ derivatives. Note that $\Sc(X)$ is dense in each of these spaces. Denote by $\Sc^n(X,U)$ the closed subspace of $\Sc^n(X)$ consisting of functions that vanish  on $Z$ with first $n$ derivatives. Then both $F^n(X,U)$ and $\Sc(U)$ are dense in $\Sc^n(X,U)$, and by Theorem \ref{pOpenSet}
\DimaD{we have $\Sc(U)=\cap_n \Sc^n(X,U)$.
Let us show that $\xi$ extends to a continuous linear functional on $\Sc^n(X,U)\hot V$ for some $n$. Indeed, $\xi:\Sc(U)\hot V \to \C$ is continuous and thus bounded by a finite linear combination of semi-norms. By
definition of the projective topology on the tensor product these semi-norms can be chosen to be of the form $p_i\otimes q_i$ where $p_i$ are semi-norms on $\Sc(U)$ and $q_i$ are semi-norms on $V$. Then each $p_i$ can be extended to $\Sc^{n_i}(X,U)$ for some $n_i$. Let $n:=\max_i n_i$.
Then $\xi$ extends to a continuous linear functional on  $\Sc^n(X,U)\hot V$.} \DimaJ{By Proposition \ref{prop:TensorDual}, this functional defines a bounded linear operator $\tilde \xi:\Sc^n(X,U)\to V^*$.}

 Note that the action of $G$ preserves all the spaces mentioned above. Thus the equivariance of $\tilde \xi$ follows from the equivariance of $\xi$ and the density of $\Sc(U)$ in $\Sc^n(X,U)$.
To obtain $\xi'$ we restrict $\tilde \xi$ to  $F^n(X,U)$.
To prove the moreover part, note that $\xi(d(\Sc(U)))=0$ and the density of $\Sc(U)$ in $\Sc^{n+\deg d}(X,U)$ implies that $\tilde \xi (d(\Sc^{n+\deg d}(X,U)))=0$. Restricting to $F^{n+\deg d}(X,U)$ we deduce the vanishing of $\xi' \circ d$ on this space.
\end{proof}

\begin{rem}
More generally, one can define Schwartz sections of Nash (=smooth semi-algebraic) bundles on  Nash manifolds. Theorem \ref{pOpenSet} is proven in \cite{AGSc} in this generality, and Corollary \ref{cor:ext} stays true in this wider generality with identical proof.
\end{rem}

\subsubsection{Representations of moderate growth}\label{subsec:ModGr}

\DimaA{
Let $\bf G$ be an affine algebraic group defined over $\R$ and $G:=\G(\R)$.
A representation $\sigma$ of $G$ in a \Fre space $V$ has \emph{moderate growth} if for any semi-norm $q$ on $V$ there exists a semi-norm $q'$ on $V$ and a regular function $a$ on $G$  such that for any $v\in V$ and $g\in G$ we have
$$q(\sigma(g)v)\leq |a(g)|q'(v).$$

\begin{notn}\label{not:C2}
We denote by $\cC(G)$ the category of smooth   representations of moderate growth, in nuclear \Fre spaces.\DimaK{ As in Notation \ref{not:C} we also denote by $\cC^{\mathrm{fin}}(G)\subset \cC(G)$ the subcategory of finite-dimensional representations.}
\end{notn}

 Note that all unitary characters have moderate growth. If $\bf G$ is a unipotent group, then a
character $\chi$ has moderate growth if and only if $\chi$ is unitary. If $\bf G$ is reductive, then
all continuous characters have moderate growth. These statements reduce to the case of one-dimensional groups, and in this case they are straightforward.
We will need the following special case of a version of Frobenius reciprocity.

\begin{lem}[{\cite[Definition 2.3.1 and Lemma 2.3.4]{GGS}}]\label{lem:Frob}
Let $V\in \cC(G)$.
Let $\bf H\subset \G$ be a Zariski closed subgroup. Let $\Delta_G$ and $\Delta_H$ denote the modular functions of $G$ and $H$.
Then we have a natural isomorphism
\DimaK{between $(V^*)^{H,\Delta_H\Delta_G^{-1}}$ and the space of $G$-invariant $V^*$-valued measures on $G/H$.}
\end{lem}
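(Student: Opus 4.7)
The plan is to apply the abstract Frobenius reciprocity for the left regular representation of $G$ on $\Sc(G)$, combined with a quotient description of $\Sc(G/H)$ via fiber integration.

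First, using Proposition \ref{prop:TensorDual} (applicable because $V$ is nuclear \Fre), I will identify the space of $G$-invariant $V^*$-valued measures on $G/H$ with the space of continuous $G$-equivariant linear maps $\mu\colon \Sc(G/H) \to V^*$, where $G$ acts on $\Sc(G/H)$ by left translation and on $V^*$ by the contragredient action.

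Second, I will construct a $G$-equivariant surjection $\mathrm{av}\colon \Sc(G) \twoheadrightarrow \Sc(G/H)$ via integration along $H$-orbits using a left Haar measure on $H$. A direct change-of-variables calculation identifies the kernel of $\mathrm{av}$ as the closure of the span of elements of the form $R_h\phi - \eta(h)\phi$ for $h\in H$, $\phi \in \Sc(G)$, where $\eta$ is a character of $H$ involving the modular functions $\Delta_G$ and $\Delta_H$.

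Third, I will invoke abstract Frobenius reciprocity for the left regular representation: any continuous $G$-equivariant linear map $\tilde\mu\colon \Sc(G) \to V^*$ (where $G$ acts on $\Sc(G)$ by left translation) is of the form
\[\tilde\mu_\lambda(\phi)(v) = \int_G \phi(g)\,\lambda(\sigma(g)^{-1}v)\,d_L g\]
for a unique $\lambda \in V^*$; convergence and continuity of this integral use the moderate-growth hypothesis on $\sigma$, while uniqueness of $\lambda$ comes from viewing $\lambda$ as the distributional value $\tilde\mu(\delta_e)$. A straightforward change-of-variables computation then shows that $\tilde\mu_\lambda$ vanishes on the generators $R_h\phi - \eta(h)\phi$ of $\ker(\mathrm{av})$ if and only if $\lambda$ transforms under $H$ by exactly the character $\Delta_H\Delta_G^{-1}$. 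Hence the assignment $\lambda\mapsto \mu_\lambda$, where $\mu_\lambda$ is the unique descent of $\tilde\mu_\lambda$ through $\mathrm{av}$, is the desired natural bijection, natural in $V$ because all the constructions are.

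The main obstacle is the topological vector space analysis needed in step two: establishing the surjectivity of $\mathrm{av}$ in the \Fre topology, identifying its kernel as the stated closed subspace, and verifying that all the induced maps on duals and tensor products are continuous. Once these standard but delicate facts about Schwartz spaces on homogeneous manifolds are in place, the rest of the argument is the direct algebraic and analytic verification sketched above, with the moderate-growth hypothesis on $V$ ensuring convergence of the integrals throughout.
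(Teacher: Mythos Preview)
The paper does not prove this lemma; it simply cites \cite[Definition 2.3.1 and Lemma 2.3.4]{GGS}. Your outline is the standard Frobenius reciprocity argument and is essentially what one finds in that reference: identify $G$-invariant $V^*$-valued measures on $G/H$ with $G$-equivariant maps $\Sc(G/H)\to V^*$, pull these back through a fiber-integration surjection $\Sc(G)\onto\Sc(G/H)$, and then use that $G$-equivariant maps $\Sc(G)\to V^*$ (for the left regular action) are parametrized by $V^*$ via the Dirac-at-$e$ correspondence. Your change-of-variables computation recovers the correct twist $\Delta_H\Delta_G^{-1}$.

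Two points deserve a bit more care than you indicate. First, your step three asserts that \emph{every} continuous $G$-equivariant $\tilde\mu:\Sc(G)\to V^*$ arises from some $\lambda\in V^*$; the existence of such a $\lambda$ (not just its uniqueness) is the substantive half and is usually obtained by passing through an approximate identity and using that the action of $G$ on $V$ is smooth, so that the resulting net in $V^*$ converges weak-$*$. Second, your description of $\ker(\mathrm{av})$ as the closed span of the elements $R_h\phi-\eta(h)\phi$ is in fact the crux of the matter: one inclusion is the easy computation you sketch, but the reverse inclusion---that nothing else lies in the kernel---needs the surjectivity of $\mathrm{av}$ together with an explicit local section (via a partition of unity or a Nash tubular neighborhood), and this is precisely what \cite{GGS} (following \cite{dCl}) supplies. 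You flag these as ``standard but delicate,'' which is fair, but they are where the actual content lives.
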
}


For the next two lemmas assume that $\G$ is reductive and let $P\subset G$ be a parabolic subgroup. Let $N$ be the unipotent radical of $P$ and $\fn$ be the Lie algebra of $N$.

\begin{lem}\label{lem:NilpTem}
Let $V$ be a (continuous)  finite-dimensional representation of $P$. Then
\begin{enumerate}[(i)]
\item \label{it:nilp} The action of $\fn$ on $V$ is nilpotent.
\item \label{it:tem} $V$ has moderate growth.
\end{enumerate}
\end{lem}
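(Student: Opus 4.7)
The plan for part (i) is to use a grading element from the center of a Levi subalgebra of $\fp$, where $\fp$ denotes the Lie algebra of $P$. Fix a Levi decomposition $\fp=\fl\oplus\fn$, with $\fl$ the Lie algebra of a Levi subgroup $L$ of $P$, and let $H$ be an element of the split part of the center of $\fl$ such that all weights of $\ad(H)$ on $\fn$ are strictly positive. Such $H$ exists by the standard structure theory of parabolic subgroups in real reductive groups. The continuous representation $\rho:P\to\GL(V)$ is automatically smooth and differentiates to $d\rho:\fp\to\gl(V)$. Decompose $V$ into generalized eigenspaces $V_\lambda$ of $d\rho(H)$. For a weight vector $X\in\fn_\alpha$, the relation $[d\rho(H),d\rho(X)]=\alpha(H)\,d\rho(X)$ implies $d\rho(X)(V_\lambda)\subseteq V_{\lambda+\alpha(H)}$; since $V$ has only finitely many generalized eigenvalues while $\alpha(H)>0$, the operator $d\rho(X)$ is nilpotent. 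Writing a general element of $\fn$ as a sum of such weight vectors shows that every element of $\fn$ strictly raises the $d\rho(H)$-grading on $V$ and is therefore nilpotent.

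For part (ii), I plan to bound $\rho$ separately on $N$ and on the Levi subgroup $L$. By part (i), $N$ acts on $V$ by unipotent operators. Since $N$ is a simply connected unipotent Lie group, the exponential $\exp:\fn\to N$ is a polynomial diffeomorphism in the matrix coordinates of any faithful embedding of $N$. Hence $\rho(\exp X)=\exp(d\rho(X))$ is a polynomial in $X$, yielding polynomial growth of the operator norm of $\rho|_N$ in the coordinates on $N$. On the real reductive algebraic group $L$, any continuous finite-dimensional representation has moderate growth; this classical fact follows from the Cartan decomposition $L=KAK$ together with the elementary polynomial bounds on continuous characters of split tori, and the boundedness of $\rho$ on the compact factor. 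Combining the two estimates via $P=LN$ and the submultiplicativity of the operator norm gives moderate growth of $\rho$ on $P$.

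The main subtlety in the plan is the existence of the grading element $H$ in the real form with the required positivity property, which relies on standard structure theory of real parabolic subgroups; the rest of part (i) is then a one-line weight-shifting argument. Once part (i) is established, the estimates in part (ii) are essentially formal, and the only nontrivial input is the classical moderate growth of continuous finite-dimensional representations of a real reductive algebraic group.
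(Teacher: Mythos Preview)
Your argument for part (i) is essentially identical to the paper's: both choose a grading element in $\fp$ (the paper's hyperbolic semisimple $S$, your $H$) whose adjoint eigenvalues on $\fn$ are strictly positive, decompose $V$ into its generalized eigenspaces, and conclude that $\fn$ strictly raises this grading and hence acts nilpotently.

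For part (ii) the paper takes a different and slightly slicker route. Instead of estimating $\rho$ separately on $N$ and on a Levi $L$, the paper first reduces to irreducible $V$; then part (i) forces $N$ to act \emph{trivially}, so $V$ descends to a representation of the reductive quotient $P/N$, where an irreducible continuous finite-dimensional representation factors as $W\otimes\chi$ with $W$ algebraic and $\chi$ a continuous character. Moderate growth then follows from the easy fact (recorded in the paper just before the lemma) that continuous characters of a reductive group have moderate growth. Your approach is also correct, but it trades the reduction to irreducibles for a direct polynomial bound on $N$ and then invokes as a black box the stronger statement that \emph{all} continuous finite-dimensional representations of a real reductive group have moderate growth. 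Your $KAK$ justification of that black box is a bit thin: $\rho|_A$ need not decompose into characters (the split center can act with nontrivial Jordan blocks, e.g.\ $t\mapsto\left(\begin{smallmatrix}1&\log|t|\\0&1\end{smallmatrix}\right)$ on $\R^{\times}$), so one must also bound the polynomial-in-$\log$ contributions. This is not hard, but it is exactly the complication the paper's reduction to irreducibles avoids, since after that reduction one only needs moderate growth of characters.
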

\begin{proof}
\eqref{it:nilp}
Note that there exists a hyperbolic semi-simple $S\in \fp$ such that
$\fp=\bigoplus_{\lambda\geq 0}\fg(\lambda)$, where $\fg(\lambda)$ denotes the $\lambda$-eigenspace of the adjoint action of $S$. Then $\fn=\bigoplus_{\lambda> 0}\fg(\lambda)$. Decomposing $V$ to generalized eigenspaces of $S$ and using the finiteness of the dimension we obtain that $\fn$ acts nilpotently.

\eqref{it:tem} We can assume that $V$ is irreducible. By  \eqref{it:nilp} this implies that $N$ acts trivially on $V$, and thus the reductive quotient $P/N$ acts on $V$. Thus $V=W\otimes \chi$, where $W$ is an algebraic representation of $P/N$ and $\chi$ is a character of $P/N$. Since both $W$ and $\chi$ have moderate growth, so does $V$.
\end{proof}

\DimaA{
Let $V\in \cC(P)$ and let $\Ind_P^G(V)$ denote the smooth induction.
\begin{lem}[{\cite[Definition 2.1.2 and Remark 2.1.4]{dCl}}] \label{lem:Integ}
 Let $(\Ind_P^G(V))^*$ denote the continuous linear dual. Then we have  natural isomorphisms of
 $G$ - representations
$$(\Ind_P^G(V))^* \cong (C^{\infty}(G, V)^{P})^* \cong \Sc^*(G,\DimaA{V}\otimes   \Delta_P^{-1})^P,$$
where $G$ acts on $C^{\infty}(G, V)$ and on $\Sc^*(G,\DimaA{V}\otimes   \Delta_P^{-1})$ from the left and $P$ acts from the right.
\end{lem}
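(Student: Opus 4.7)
Since the statement is attributed to \cite{dCl}, the plan is to recover both isomorphisms by unpacking definitions and exploiting the compactness of $G/P$.

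The first isomorphism $(\Ind_P^G(V))^* \cong (C^\infty(G,V)^P)^*$ is essentially tautological: the smooth induction $\Ind_P^G(V)$ is (canonically identified with) the subspace of $C^\infty(G,V)$ consisting of $f$ satisfying $f(gp) = \sigma(p)^{-1} f(g)$, which is exactly $C^\infty(G,V)^P$ for the right $P$-action $(p \cdot f)(g) = \sigma(p) f(gp)$; taking continuous duals then gives the isomorphism, preserving the left $G$-action.

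For the second isomorphism I would introduce the averaging map
$$A: \Sc(G, V \otimes \Delta_P^{-1}) \longrightarrow \Ind_P^G(V), \qquad A(\phi)(g) := \int_P \sigma(p) \phi(gp) \, d_\ell p,$$
and verify by a direct change-of-variables calculation that (i) $A(\phi)$ transforms correctly under right translation by $P$, so that it indeed lies in $\Ind_P^G(V)$; and (ii) the twist by $\Delta_P^{-1}$ on the source is precisely what makes $A$ descend to the $P$-coinvariants, i.e.\ $A(p\cdot\phi) = A(\phi)$ for the natural right $P$-action (right translation on $G$ combined with $\sigma \otimes \Delta_P^{-1}$ on values). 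Since $G/P$ is compact (this is the only place reductivity of $G$ is used), a partition-of-unity argument lifts any $f \in \Ind_P^G(V)$ to a Schwartz preimage, so $A$ is surjective. Dualizing, $A^*: (\Ind_P^G(V))^* \hookrightarrow \Sc^*(G, V\otimes \Delta_P^{-1})$ automatically lands in the $P$-invariants.

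The main obstacle will be the topological step of showing the image of $A^*$ is all of $\Sc^*(G, V\otimes \Delta_P^{-1})^P$, equivalently that $A$ is a topological quotient map whose kernel is the closure of the subspace spanned by $P$-coboundaries $\phi - p\cdot\phi$. This is where the tools just assembled come in: nuclearity of $\Sc(G)$, the tensor-product duality of Proposition \ref{prop:TensorDual}, and the flatness statement of Lemma \ref{lem:flat} together reduce the claim to a short exact sequence of \Fre spaces relating Schwartz functions on $G$, on $P$, and on $G/P$. Once this topological identification is in hand, $G$-equivariance under left translation is automatic from the construction, and the two isomorphisms chain together to give the stated result.
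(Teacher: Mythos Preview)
The paper does not give its own proof of this lemma: it is stated with a bare citation to \cite[Definition 2.1.2 and Remark 2.1.4]{dCl} and used as a black box. So there is no ``paper's proof'' to compare against beyond the cited reference.

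Your outline is correct and is in fact exactly the content of the cited passage in \cite{dCl}. The first isomorphism is indeed definitional. For the second, du Cloux \emph{defines} the Schwartz induction as the $P$-coinvariants of $\Sc(G,V\otimes\Delta_P^{-1})$ (this is his Definition 2.1.2), and his Remark 2.1.4 is precisely the statement that the averaging map $A$ you wrote down is a surjection onto $C^\infty(G,V)^P$ when $G/P$ is compact, identifying the coinvariants with the ordinary smooth induction. Dualizing then gives the claimed isomorphism. You have correctly located the one nontrivial point: that $\ker A$ coincides with the closed span of the coboundaries $\phi - p\cdot\phi$. One containment is formal; the other (that every $\phi\in\ker A$ is a limit of coboundaries) is the substance of du Cloux's remark, and your sketch of how to get it from a local product decomposition and the tensor/flatness machinery is the right shape, though you would still need to fill in the actual argument. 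Since the paper itself is content to cite this, your proposal is at least as complete as what the paper provides.
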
}

\DimaK{
\subsection{Distributions on projective varieties}
\begin{lemma}\label{lem:PrFrob}
Let $\bf G$ be an algebraic group defined over $F$, let $\bf W$ be an algebraic (finite-dimensional) $F$-representation of $\mathbf{G}$, and let $pr:\W^{\times}\to \bP(\W)$ denote the natural projection, where $\W^{\times}$ denotes the complement to $0$ in $\W$ and $\bP(\W)$ denotes the projective space.
Let $\X\subset \bP(\W)$ denote a locally closed subvariety and let $X':=pr^{-1}(X)$. If $F$ is Archimedean assume that $X$ is smooth.
Let $(\sigma,V)\in \cC(G)$.

Then we have an isomorphism of $G$-representations
\begin{equation}\label{=Frob}
\Sc^*(X,V)\simeq \Sc^*(X',V)^{F^{\times}},
\end{equation}
where the multiplicative group $F^{\times}$ acts on $\Sc^*(X',V)$ by the scaling action on $\Sc(X)$.

Moreover, this family of isomorphisms is compatible with restrictions to open subvarieties and with continuation by zero from closed subvarieties, and preserves holonomicity.
\end{lemma}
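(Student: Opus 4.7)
The plan is to realize $pr: X' \to X$ as a principal $F^{\times}$-bundle and establish \eqref{=Frob} as a Frobenius-reciprocity isomorphism implemented by fiber integration. The key ingredient is a continuous linear map $pr_*: \Sc(X',V) \to \Sc(X,V)$ defined, for $f \in \Sc(X',V)$ and $x\in X$, by
\[
(pr_*f)(x) := \int_{F^{\times}} f(\lambda \tilde x)\, d^{\times}\lambda,
\]
where $\tilde x \in X'$ is any lift of $x$ and $d^{\times}\lambda$ is a Haar measure on $F^{\times}$. Independence of $\tilde x$ follows from translation-invariance of $d^{\times}\lambda$, and the Schwartz (or smooth compactly supported) decay of $f$ along the fiber ensures convergence and Schwartzness of the result. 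The desired isomorphism will be $\xi \mapsto \xi \circ pr_*$; it lands in $\Sc^*(X',V)^{F^{\times}}$ because $pr_*(\sigma_{\lambda}f - f)=0$, and is $G$-equivariant since both $pr$ and $pr_*$ are.

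The main technical content is that $pr_*$ is topologically surjective with kernel equal to the closed span of $\{\sigma_{\lambda}f-f\}$; equivalently, $pr_*$ induces a topological isomorphism from the $F^{\times}$-coinvariants $\Sc(X',V)_{F^{\times}}$ onto $\Sc(X,V)$, whose dual gives \eqref{=Frob}. I would proceed locally: by Proposition \ref{prop:QP} one covers $X$ by Zariski-open subsets $U_i$ over which the restriction of the tautological bundle trivializes, $pr^{-1}(U_i)\cong U_i\times F^{\times}$. On each piece $\Sc(U_i\times F^{\times},V)\cong \Sc(U_i,V)\hot \Sc(F^{\times})$, and the factor $\Sc(F^{\times})$ has one-dimensional $F^{\times}$-coinvariants spanned by the Haar functional, yielding the local isomorphism. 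Globalization uses a smooth partition of unity $\{\phi_i\}$ on $X$ subordinate to $\{U_i\}$: for $h\in \Sc(X,V)$ one lifts each $\phi_i h$ to $f_i\in \Sc(pr^{-1}(U_i),V)$ with $pr_* f_i=\phi_i h$, extends by zero, and gets $pr_*(\sum f_i)=h$. The corresponding kernel statement reduces to the standard scalar case on $F^{\times}$. The main obstacle I foresee is justifying topological (as opposed to merely algebraic) exactness in the \Fre topologies; this can be addressed using Lemma \ref{lem:flat} to reduce to the scalar case, together with explicit semi-norm estimates for $pr_*$.

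For the \emph{moreover} clause, compatibility with restriction to an open subvariety $Y\subset X$ is immediate since $pr_*$ is local in $X$ -- the restriction of $\xi\circ pr_*$ to $\Sc(pr^{-1}(Y),V)$ equals $(\xi|_Y)\circ pr_*$. Compatibility with continuation by zero from a closed subvariety $Z\subset X$ follows dually, using Theorem \ref{pOpenSet}: the short exact sequences of Schwartz spaces on $X$ and on $X'$ are compatible via $pr_*$. Finally, preservation of holonomicity follows from $pr$ being smooth of relative dimension one: the $D$-module generated by $\xi\circ pr_*$ on $X'$ is the $D$-module pullback of the one generated by $\xi$ on $X$, and smooth pullback preserves holonomicity; the converse follows from the standard equivalence between holonomic $D_X$-modules and weakly $F^{\times}$-equivariant holonomic $D_{X'}$-modules.
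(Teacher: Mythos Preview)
Your approach via fiber integration $pr_*$ is exactly the paper's: it defines the map $\alpha(f)(\ell):=\int_{\ell} f\,\mu_{\ell}$ on $\Sc(X')$ and obtains \eqref{=Frob} as $\xi\mapsto \xi\circ\alpha$, asserting the functoriality properties without further argument. You supply considerably more of the underlying analysis; one small correction is that your appeal to Proposition~\ref{prop:QP} for the trivializing cover is misplaced---the standard affine charts of $\bP(\W)$ already trivialize the tautological bundle.
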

\begin{proof}
First, fix a Haar measure $\mu$ on $F^{\times}$. Then, for every one-dimensional subspace $\ell\subset W$ we fix an $F^{\times}$-invariant measure $\mu_{\ell}$ on $\ell^{\times}$ to be the pullback of $\mu$ by some linear isomorphism $\varphi:\ell\simeq F$. Note that $\mu_{\ell}$ does not depend on the choice of $\varphi$, since any two choices of $\varphi$ differ by a multiplicative constant, and $\mu$ is invariant to multiplication. Now, define an epimorphism $\alp: \Sc(X')\onto \Sc(X)$ by $$\alp(f)(\ell):=\int_{\ell}f \mu_{\ell}.$$
The isomorphism \eqref{=Frob} is given by the composition of operators $\Sc(X)\to V^*$ with $\alp$. It is easy to see that it has the stated functoriality properties.
\end{proof}

\subsection{The Casselman-Wallach category and the Casselman embedding theorem}

Let $\bf P$ be a linear algebraic $\R$-group and $\bf U$ be its unipotent radical. As in Notation \ref{not:CW} let $\cM(P)$ denote the category of
 smooth finitely generated \Fre representations of moderate growth of $P$ that are trivial on $U$ and define admissible representations of $P/U$ (see \cite[\S 11.5]{Wal2} or \cite{Cas} for the notion of admissible).

We will need the following version of Casselman's theorem.

\begin{thm}\label{thm:CasSubRep}
Let $\pi\in \cM(P)$ and let $Q$ be a minimal parabolic subgroup of $P$. Then there exists a finite-dimensional (smooth)
representation $\sigma$ of $Q$ and an epimorphism $Ind_{Q}^P(\sigma)\onto \pi$.
\end{thm}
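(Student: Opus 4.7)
The plan is to reduce everything to the classical Casselman subrepresentation theorem (in its smooth Fr\'echet form) applied to the reductive quotient $L := P/U$.

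First I would verify the structural claim that $U \subseteq Q$. Indeed, the standard convention for parabolic subgroups of a linear algebraic group $\mathbf{P}$ with unipotent radical $\mathbf{U}$ is that they are precisely the preimages in $\mathbf{P}$ of parabolic subgroups of the reductive quotient $\mathbf{L} := \mathbf{P}/\mathbf{U}$; in particular all of them contain $\mathbf{U}$, and minimal parabolics of $P$ correspond bijectively to minimal parabolics of $L$. Hence $Q \supseteq U$ and $Q/U$ is a minimal parabolic of the real reductive group $L$.

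Next I would descend $\pi$. By definition of $\cM(P)$, the representation $\pi$ is trivial on $U$, so it factors through a representation of $L$ which is smooth, finitely generated, of moderate growth, and admissible, i.e.\ a Casselman--Wallach representation of $L$. I would then invoke the \emph{dual} of Casselman's subrepresentation theorem in the smooth Fr\'echet category (a standard consequence of the original Casselman embedding theorem combined with Casselman--Wallach globalization, applied to the contragredient): every Casselman--Wallach representation of $L$ is a quotient of $\Ind_{Q/U}^{L}(\sigma')$ for some finite-dimensional smooth representation $\sigma'$ of $Q/U$. This provides an epimorphism $\Ind_{Q/U}^{L}(\sigma') \onto \pi$.

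Finally I would pull back. Let $\sigma$ be the inflation of $\sigma'$ to $Q$ along the projection $Q \twoheadrightarrow Q/U$; it is a finite-dimensional smooth representation of $Q$ trivial on $U$. Since $\sigma$ is trivial on $U$, parabolic induction factors through the quotient and yields a natural $P$-equivariant identification $\Ind_Q^P(\sigma) \cong \Ind_{Q/U}^{L}(\sigma')$ (where $P$ acts on the right-hand side through its quotient $L$). Composing with the epimorphism from the previous step produces the desired surjection $\Ind_Q^P(\sigma) \onto \pi$.

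The main obstacle is locating and correctly citing the smooth Fr\'echet, dualized form of Casselman's theorem, since the classical 1975 statement is about Harish-Chandra modules and produces an \emph{embedding} rather than a quotient; the passage to the quotient form via contragredients and to the smooth globalization via Casselman--Wallach is routine but must be attributed to the right references. A minor point to handle cleanly is the convention that a ``minimal parabolic subgroup'' of the possibly non-reductive group $P$ is the preimage of a minimal parabolic of $L$, which should be stated explicitly.
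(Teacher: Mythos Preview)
Your proposal is correct and follows essentially the same route as the paper: reduce to the reductive quotient $L=P/U$ (using that $\pi$ is trivial on $U$ and that minimal parabolics of $P$ correspond to those of $L$), then combine the Casselman embedding theorem with the Casselman--Wallach equivalence and duality to obtain the quotient form. The paper's argument is exactly this, stated more tersely and with explicit citations to \cite[\S 11.6.8]{Wal2}, \cite{Cas}, and \cite[Proposition 8.23]{CM}.
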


This theorem immediately reduces to the case of trivial $U$. This case in turn follows from the Casselman-Wallach equivalence of categories (see \cite[\S 11.6.8]{Wal2} or \cite{Cas}), the Casselman embedding theorem \cite[Proposition
8.23]{CM} and the standard duality in the category $\cM(P/U)$.

For the notion of a parabolic subgroup of a non-reductive group see {\it e.g.} \cite[\S 11]{BorGr}.

We will apply this theorem in the case when $P$ is a parabolic subgroup of a \DimaL{linear algebraic} group $G$. In this case $Q$ is a minimal parabolic subgroup of $G$ as well.
}
\section{Preliminaries on holonomic $D$-modules and distributions}\label{sec:Dmod}

We will now recall some facts and notions from the theory of $D$-modules on smooth  algebraic varieties. For the proofs and for further details we refer the reader to \cite{Ber,Bor,HTT}.

By a $D$-module on a smooth affine algebraic variety $\bf X$ we mean a module over the algebra $D(\bf X)$ of differential operators.
The algebra $D(\bf X)$ is equipped with a filtration, defined by the order of differential operators. This filtration is called the geometric filtration. The associated graded algebra with respect to
this filtration is the algebra $\cO(T^*{\bf X})$ of regular functions on the total space of the cotangent
bundle of ${\bf X}$. This implies that the algebra $D(\bf X)$ is Noetherian.

This allows us to define the singular support of a finitely generated $D$-module
$M$ on ${\bf X}$ in the following way. Choose a good filtration on $M$, {\it i.e.} a filtration compatible with the filtration on $D(\bf X)$ such that the
associated graded module is a finitely-generated module over $\cO(T^*\X)$.
 Define the singular
support $SS(M)$ to be the support of the associated graded module. One can show that the singular support does
not depend on the choice of a good filtration on $M$, and that a good filtration  exists if and only if $M$ is finitely generated.

The Bernstein inequality states that, for
any non-zero finitely generated $M$, we have $\dim SS(M) \geq  \dim X$. If the equality holds then
$M$ is called holonomic. This property is closed under submodules, quotients, extensions, \DimaJ{and tensor products}, and implies finite length.

For any \DimaD{\Fre}space $V$, the space $\Sc^*(X,V)$ of tempered $\DimaD{V^*}$-valued distributions has a  structure of a right $D({\bf X})$-module, given by $(\xi d)(f)=\xi(df)$, where $d\in D({\bf X})$, $X={\bf X}(\R)$, $\xi \in \Sc^*(X,V)$  and $f\in \Sc(X)$. A distribution $\xi\in \Sc^*(X,V)$ is called holonomic if the submodule $\xi D({\bf X})\subset \Sc^*(X,V)$ generated by $\xi$ is holonomic. Note  that if $\xi$ is holonomic then so is $\xi p$, for any polynomial $p$ on $\bf X$.

For any polynomial $p\in \cO(\bf X)$, the algebra $D({\bf X}_p)$ of differential operators on the basic open affine set $\X_{p}:=\{x \in \X \text{ with }p(x)\neq 0\}$ is isomorphic to the localization $D({\bf X})_p$, i.e. the algebra of fractions of the form $dp^{-i}$. In order to define the latter algebra one proves that the family $p^i$ satisfies Ore conditions. This follows from the next lemma.

\begin{lem}\label{lem:Ore}
For any index $n$ and any $d\in D({\bf X})$ there exists $d'\in D({\bf X})$ such that $p^n d'=dp^{n+\deg d}$.
\end{lem}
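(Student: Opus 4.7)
The plan is to establish the lemma by a short binomial-type expansion of $d\cdot p^{n+\deg d}$ in $D({\bf X})$, combined with the observation that commuting with a polynomial strictly decreases the order of a differential operator.

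Set $m:=\deg d$, and define the $\C$-linear map $\sigma\colon D({\bf X})\to D({\bf X})$ by $\sigma(x):=[p,x]=px-xp$. The first step is to verify that $\sigma$ commutes with left multiplication $L_p$ by $p$, which is immediate from the Jacobi-like identity
$$\sigma(px)=[p,px]=p(px)-(px)p=p(px-xp)=p\,\sigma(x).$$
Right multiplication $R_p$ by $p$ satisfies $R_p=L_p-\sigma$, and since $L_p$ and $\sigma$ commute we may apply the ordinary binomial theorem to obtain
$$dp^{n+m}=R_p^{\,n+m}(d)=\sum_{j\geq 0}\binom{n+m}{j}(-1)^j\,p^{n+m-j}\,\sigma^j(d).$$

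The second step is to observe that $\sigma$ strictly lowers the order of a differential operator: since $p\in F_0 D({\bf X})$, the commutator $[p,d]$ lies in $F_{m-1}D({\bf X})$ (its principal symbol is the Poisson bracket $\{p,\sigma_m(d)\}$ on $T^*{\bf X}$, which is homogeneous of degree $m-1$ in the fiber variables). Iterating, $\sigma^j(d)=0$ for all $j>m$, so the sum above is finite and runs only over $0\le j\le m$.

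The third step is to factor $p^n$ out of the truncated sum. For $0\le j\le m$ one has $n+m-j\geq n$, so we may set
$$d':=\sum_{j=0}^{m}\binom{n+m}{j}(-1)^j\,p^{m-j}\,\sigma^j(d)\;\in\;D({\bf X}),$$
which is manifestly a differential operator and satisfies $p^n d'=dp^{n+m}$, as required. I do not anticipate any real obstacle, since the commutativity $L_p\sigma=\sigma L_p$ is immediate and the order-decreasing property of $\sigma$ is standard; the ``hard'' point is merely recognizing the correct formal identity to apply.
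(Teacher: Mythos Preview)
Your proof is correct. The paper's own argument is a one-line hint, ``proven by induction on $\deg d$'', relying on the same key fact that $[p,\cdot]$ drops the order filtration by one. Your binomial expansion is simply a closed-form repackaging of that induction: rather than peeling off one factor of $p$ at a time and invoking the hypothesis on $[p,d]$, you expand $R_p^{n+m}=(L_p-\sigma)^{n+m}$ all at once using the commutation $L_p\sigma=\sigma L_p$. The two arguments are equivalent in substance; yours has the minor bonus of producing an explicit formula for $d'$.
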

This lemma is proven by induction on $\deg d$.

\begin{thm}[See {\it e.g.} {\cite[\S VI.5.2 and Theorem VII.10.1]{Bor}}]\label{thm:OpenPush}
Let $p\in \cO(\bf X)$ and let $M$ be a holonomic module over $D({\bf X}_{p})$. Then $M$ is holonomic also as a module over $D({\bf X})$.
\end{thm}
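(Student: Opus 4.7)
The plan is to establish holonomicity over $D(\X)$ in two steps: (a) prove finite generation of $M$ over $D(\X)$, and then (b) control the dimension of the singular support in $T^*\X$.

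For (a), I would pick generators $m_1,\dots,m_k$ of $M$ over $D(\X_p)=D(\X)_p$. By Lemma~\ref{lem:Ore}, every element of $M$ has the form $\sum_i p^{-n_i} d_i m_i$ with $d_i\in D(\X)$, so the ascending chain of $D(\X)$-submodules $M_n:=\sum_{i,\, j\leq n} D(\X)\cdot p^{-j}m_i$ has union $M$. The heart of the proof is showing this chain stabilizes. For this I would invoke a Bernstein--Sato polynomial for each $m_i$ relative to $p$: there exist $P_i(s)\in D(\X)[s]$ and nonzero $b_i(s)\in\C[s]$ formally satisfying $P_i(s)\cdot p^{s+1}m_i=b_i(s)\, p^s m_i$ (in the auxiliary module obtained by adjoining $p^s$). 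Specializing $s=-n$ beyond the largest integer root of $\prod_i b_i$ expresses $p^{-n}m_i$ as a $D(\X)$-combination of the $p^{-n+1}m_j$'s. By downward induction on $n$ this shows $M=M_N$ for some finite $N$, so $M$ is finitely generated over $D(\X)$.

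For (b), equip the now finitely generated module $M$ with a good $D(\X)$-filtration $F_\bullet M$. Localizing at $p$ yields a good $D(\X_p)$-filtration on $M$, whose associated graded over $\cO(T^*\X_p)$ has support of dimension $\dim\X$ by the holonomicity hypothesis on $M$ as a $D(\X_p)$-module. The singular support $SS(M)\subset T^*\X$ computed via the $D(\X)$-filtration is a closed conic subvariety whose restriction to $T^*\X_p$ is the original singular support. Hence $SS(M)$ equals the Zariski closure in $T^*\X$ of the original singular support, possibly together with part of the zero section over $\X\setminus\X_p$ (of dimension at most $\dim\X$). Since Zariski closure preserves dimension, $\dim SS(M)=\dim\X$, and Bernstein's inequality forces equality, giving holonomicity of $M$ over $D(\X)$.

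The main obstacle is step (a), specifically the existence of the Bernstein--Sato polynomials $b_i(s)$. This is itself a nontrivial theorem, ultimately resting on holonomicity of the auxiliary $D(\X)[s]$-module generated by the formal symbol $p^s m_i$. An alternative route bypassing Bernstein--Sato would be to appeal to the general fact that direct image along an affine open embedding preserves holonomicity for algebraic $D$-modules, but that theorem is proved by essentially the same mechanism; either way, the finite generation step is where all the work lies.
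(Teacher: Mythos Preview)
The paper does not prove this theorem; it is cited from \cite{Bor}, so there is no in-paper argument to compare against. I will assess your sketch on its own merits.

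Step (a) is correct in outline: finite generation of $M$ over $D(\X)$ does follow from the existence of $b$-functions for sections of holonomic modules, exactly as you describe, and you rightly flag that this existence result is itself the nontrivial input.

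Step (b), however, has a genuine gap. The assertion that the part of $SS(M)$ lying over $\X\setminus\X_p$ is contained in the zero section is false. Take $\X=\mathbb{A}^1$, $p=x$, and $M=\C[x,x^{-1}]$. From the short exact sequence $0\to\C[x]\to M\to M/\C[x]\to 0$ and the standard computation $SS(M/\C[x])=\{x=0\}\subset T^*\mathbb{A}^1$ one obtains $SS(M)=\{\xi=0\}\cup\{x=0\}$: the \emph{entire} cotangent fibre over the origin appears, not merely its intersection with the zero section. Your dimension argument therefore breaks down, since a priori $SS(M)\cap T^*\X|_{Z(p)}$ could have dimension as large as $2\dim\X-1$, and nothing you have written rules this out.

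The standard proofs do not split cleanly into ``finite generation'' plus ``easy dimension count''. One route (essentially Bernstein's) works with the auxiliary $D(\X)[s]$-module generated by the formal symbols $p^s m_i$, proves a multiplicity bound for it over $D(\X)\otimes_{\C}\C(s)$, and then specialises $s$. Another route uses the homological characterisation of holonomicity via the vanishing of $\mathrm{Ext}^i_{D(\X)}(M,D(\X))$ for $i<\dim\X$, combined with flat base change along $D(\X)\to D(\X)_p$. Either way, controlling what happens over the zero locus of $p$ requires real work beyond a closure argument, so your final sentence understates the difficulty by locating all of it in step~(a).
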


In the case when ${\bf X}$ is an affine space, another natural filtration on $D({\bf X})$ is possible. This filtration is called the arithmetic filtration, or the Bernstein filtration. It leads to a different definition of singular support and thus could a priori lead to a different notion of a holonomic module.  However, the two definitions of holonomicity are equivalent, since both are equivalent to a certain homological property, see  \cite[\S V.2]{Bor}.

\DimaA{All the notions defined in this section can be naturally extended to non-affine algebraic varieties, and the statements above continue to hold. However, we will only use the notion of  holonomic distributions on non-affine varieties. Namely,  a distribution is holonomic if its restrictions to some (equivalently, any) affine cover are holonomic. We now allow $X$ to be non-affine.}

\begin{lemma}[{See {\it e.g.} {\cite[Theorem VI.7.11]{Bor} and \cite[Facts 2.3.8 and 2.3.9]{AGAMOT}}}]\label{lem:hol}
Let ${\bf Z\subset X}$ be a closed smooth subvariety,  let $\xi\in \Sc^*(Z,V)$, and let $\eta\in \Sc^*(X,V)$ be the extension of $\xi$ to $X$ by zero. Then
\begin{enumerate}[(i)]
\item $\eta$ is holonomic if and only if $\xi$ is holonomic.
\item \label{hol:group} Let an algebraic group $\G$ act transitively on $\bf Z$, and its $\R$-points $G$ act linearly on $V$. Suppose that $\xi$ is $G$-equivariant \DimaK{and $V$ is finite-dimensional}. Then $\xi$ is holonomic.
\end{enumerate}
\end{lemma}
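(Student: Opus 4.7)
The two items can be handled separately, so I would address them in order.

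For (i), the statement reduces to standard behavior of holonomicity under pushforward along a closed embedding $i:\mathbf{Z}\hookrightarrow \X$ of smooth varieties. Extension by zero of a distribution corresponds, in the language of $D$-modules, to the functor $i_{+}$, which by Kashiwara's theorem gives an equivalence between $D(\mathbf{Z})$-modules and $D(\X)$-modules set-theoretically supported on $Z$, and transforms singular supports by $SS(i_{+}M)=T^{*}_{\mathbf{Z}}\X\times_{\mathbf{Z}}SS(M)$. Since $\dim T^{*}_{\mathbf{Z}}\X = \dim \X$, the cyclic module generated by $\xi$ is holonomic if and only if its pushforward (the cyclic module generated by $\eta$) is. This is precisely what is given by the cited references in Borel and in \cite{AGAMOT}; I would simply invoke them.

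For (ii), I would use (i) to reduce to verifying that $M:=\xi D(\mathbf{Z})$ is holonomic as a $D(\mathbf{Z})$-module. The heart of the matter is that the hypotheses force $M$ to be coherent as an $\mathcal{O}(\mathbf{Z})$-module, since then its characteristic variety sits inside the zero section of $T^{*}\mathbf{Z}$, whose dimension is $\dim \mathbf{Z}$, and holonomicity follows at once. To see coherence, choose a basis $v_{1}^{*},\ldots,v_{n}^{*}$ of $V^{*}$ and write $\xi=\sum_{i}\xi_{i}\otimes v_{i}^{*}$ with scalar distributions $\xi_{i}\in\Sc^{*}(Z)$. Differentiating the $G$-equivariance at the identity produces, for each $A\in\g$, relations of the form $\xi_{i}\cdot \tau_{A}=\sum_{j}c_{ij}(A)\xi_{j}$, where $\tau_{A}$ denotes the vector field on $\mathbf{Z}$ induced by $A$ and the $c_{ij}(A)$ are the matrix entries of the infinitesimal representation $d\pi^{*}(A)\in \End(V^{*})$. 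Since $G$ acts transitively on $\mathbf{Z}$, the $\tau_{A}$ span the tangent space at every point $z\in\mathbf{Z}$, so they generate $\operatorname{Der}(\mathcal{O}_{\mathbf{Z}})$ over $\mathcal{O}_{\mathbf{Z}}$ Zariski-locally. Consequently every first-order derivative of each $\xi_{i}$ lies in $\sum_{j}\mathcal{O}(\mathbf{Z})\xi_{j}$, and by induction on the order of differential operators so does every element of $M$, establishing the required coherence.

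The main technical point to verify carefully is the passage from the integrated equivariance $g\cdot\xi=\xi$ to the infinitesimal identity $\xi_{i}\cdot\tau_{A}=\sum_{j}c_{ij}(A)\xi_{j}$ in the distributional sense. This is the only place where finite-dimensionality of $V$ is used: it ensures that the representation $\pi$ on $V^{*}$ is automatically smooth and differentiates to a bona fide $\g$-action, so the formal calculation makes sense applied to distributions. Once this infinitesimal identity is in hand, the remainder of the argument is purely algebraic bookkeeping with the order filtration on $D(\mathbf{Z})$ and the definition of singular support.
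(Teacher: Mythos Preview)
The paper does not supply its own proof of this lemma; it simply records the statement with pointers to \cite[Theorem VI.7.11]{Bor} and \cite[Facts 2.3.8 and 2.3.9]{AGAMOT}. Your write-up therefore cannot be compared against a proof in the paper, but it does correctly reconstruct the standard argument those references encode.

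Part (i) is fine in substance: Kashiwara's equivalence and the behavior of characteristic varieties under a closed embedding give exactly the ``holonomic in $\mathbf{Z}$ iff holonomic in $\X$'' statement. One small imprecision: your displayed identity $SS(i_{+}M)=T^{*}_{\mathbf{Z}}\X\times_{\mathbf{Z}}SS(M)$ is not literally correct as a set-theoretic equality (the correct description is $SS(i_{+}M)=\varpi(\rho^{-1}SS(M))$ for the usual correspondence $T^{*}\X \xleftarrow{\varpi} \mathbf{Z}\times_{\X}T^{*}\X \xrightarrow{\rho} T^{*}\mathbf{Z}$), but the dimension count you draw from it is right, and in any case you are only invoking the cited theorem here.

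Part (ii) is also correct and is the expected argument: differentiate the equivariance, use transitivity of the $\mathbf{G}$-action on $\mathbf{Z}$ so that the induced vector fields $\tau_{A}$ generate the tangent sheaf over $\mathcal{O}_{\mathbf{Z}}$, and conclude that the cyclic $D(\mathbf{Z})$-module generated by $\xi$ is already finitely generated over $\mathcal{O}(\mathbf{Z})$, hence has characteristic variety in the zero section. Your remark that finite-dimensionality of $V$ is what makes the infinitesimal computation legitimate is exactly the point. The only bookkeeping to keep straight is that in this paper $\Sc^{*}(Z,V)$ carries a \emph{right} $D(\mathbf{Z})$-module structure, so the sign/side conventions in your relation $\xi_{i}\cdot\tau_{A}=\sum_{j}c_{ij}(A)\xi_{j}$ should be matched to that; this does not affect the argument.
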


\DimaJ{For any $N\in \R$ denote $\C_{> N}:=\{\lam \in \C \, \vert \, \re \lam > N\}$. 
For any $r\in \R$ denote by $\Sc^*_{r}(X,V)$ the space of analytic functions $\xi:\C_{>r}\to \Sc^*(X,V)$. Note that for $r'<r$ there is a natural embedding of $\Sc^*_{r'}(X,V)$ into $\Sc^*_{r}(X,V)$, and denote by $\Sc^*_{\Lambda}(X,V)$ the direct limit $\varinjlim_{r\in \R} \Sc_r^*(X,V)$. Thus we regard the functions $\xi_{\lam}$ and
$\xi'_{\lam}$ as defining the same element in $\Sc^*_{\Lambda}(X,V)$ if they agree in some right half plane.

\DimaM{We will say that $\xi:\C_{>r}\to \Sc^*(X,V)$ is a \emph{meromorphic family} if there exists an analytic function $\alp$ on $\C$ such that $\alp \xi$ is analytic.}

Let $p$ be a polynomial on $\bf X$ with real coefficients, which takes non-negative values on $X:={\bf X}(\R)$ and let $\xi \in \Sc^*(X,V)$. By Lemma \ref{lem:Plam}, we can define a family  $\xi p^{\lam}\in\Sc^*_{\Lambda}(X,V) $.

Our main tool is the following theorem, essentially proven in \cite{Ber}.
\begin{thm}\label{thm:Ber}
If $\xi$ is holonomic then the family $\xi_{\lam}\in \Sc^*_{\Lambda}(X\DimaI{,V})$  defined by $\xi_{\lam}=\xi p^{\lam}$ for $\lam \in  \C_{> N}$ for some $N>0$ has a meromorphic continuation to the entire complex plane.
 Moreover, all the distributions in the extended family and all the  coefficients of the Laurent expansion at any $\lam \in \C$ are holonomic.
\end{thm}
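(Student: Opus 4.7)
The plan is to adapt Bernstein's original proof of the meromorphic continuation of $p^{\lam}$ to the setting of a holonomic ``source'' distribution $\xi$. First I would reduce to the case where $\mathbf{X}$ is affine: since by assumption holonomicity is checked on an affine cover and $\Sc^*(X,V)$ is a sheaf on $X$, one can construct the continuation on each affine chart and compare on the overlaps (where the two meromorphic families coincide for $\re\lam\gg 0$ and hence everywhere).

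The central algebraic input is a Bernstein--Sato type functional equation. Let $M:=\xi\cdot D(\mathbf{X})\subset \Sc^*(X,V)$ be the holonomic right $D(\mathbf{X})$-module generated by $\xi$, and form the auxiliary $D(\mathbf{X})[\lam]$-module
$$\widetilde{M}:=M_p\otimes_{\mathbb{C}}\mathbb{C}[\lam]\cdot p^{\lam},$$
on which the right $D(\mathbf{X})$-action is determined by the Leibniz rule $(\eta\, p^{\lam})\cdot\partial=(\eta\cdot \partial)\,p^{\lam}+\lam\, \eta\, p^{-1}(\partial p)\,p^{\lam}$. Using Bernstein's inequality over $D(\mathbf{X})[\lam]$ together with the holonomicity of $M$ (and Lemma \ref{lem:Ore} for the Ore computations), one shows that $\widetilde{M}/\widetilde{M}\cdot p$ is finitely generated over $\mathbb{C}[\lam]$, which produces $0\ne b(\lam)\in\mathbb{C}[\lam]$ and $D(\lam)\in D(\mathbf{X})[\lam]$ satisfying
$$b(\lam)\,\xi p^{\lam}\;=\;\xi p^{\lam+1}\cdot D(\lam).$$
For $\re\lam>N$ Lemma \ref{lem:Plam} guarantees that both sides exist as members of $\Sc^*(X,V)$ and the equation holds there; rewriting it as $\xi p^{\lam}=b(\lam)^{-1}\,\xi p^{\lam+1}\cdot D(\lam)$ extends the family to $\re\lam>N-1$ with at most poles at zeros of $b$, and iterating furnishes a meromorphic continuation to all of $\mathbb{C}$.

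The vector-valued case is no harder, since the right $D(\mathbf{X})$-module structure on $\Sc^*(X,V)\cong \Sc^*(X)\hot V^*$ is $V^*$-linear and $M$ is a sub-$D$-module of this space; continuity in $\lam$ with respect to the dual-\Fre topology is inherited from the scalar case via the projective tensor product. For the ``moreover'' part, every distribution $\xi p^{\lam_0}$ in the continued family, and every Laurent coefficient of $\xi p^{\lam}$ at an arbitrary point $\lam_0\in\mathbb{C}$, is obtained by specialization or truncation from $\widetilde{M}$; hence the $D(\mathbf{X})$-module it generates is a subquotient of a specialization of $\widetilde{M}$, whose singular support is controlled by that of $M$ and thus remains Lagrangian, so holonomicity is preserved. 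The main obstacle is the technical verification of the Bernstein--Sato equation for a holonomic distribution $\xi$ (rather than the function $p$ itself), and checking that the inductive step interacts well with the projective tensor topology on $\Sc^*(X,V)$; both points are essentially contained in \cite{Ber}, the latter because multiplication by $p^{\lam}$, application of polynomial differential operators, and division by the scalar $b(\lam)$ are each continuous with respect to the relevant topologies.
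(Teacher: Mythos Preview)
Your proposal is correct and is, at its core, the same argument as the paper's: reduce to the affine case and invoke Bernstein's $b$-function machinery on the holonomic module generated by $\xi$ to obtain a functional equation $b(\lam)\,\xi p^{\lam}=\xi p^{\lam+1}\cdot D(\lam)$, then iterate. The one organizational difference is that the paper, after reducing to affine $\mathbf{X}$, takes a further closed embedding $i:\mathbf{X}\hookrightarrow\A^n$, pushes $\xi$ forward to $\eta$ on $\R^n$ (holonomicity is preserved by Lemma~\ref{lem:hol}), applies \cite{Ber} verbatim to $\eta(q^2)^{\lam/2}$ on $\A^n$, and then restricts back to $X$. You instead run the Bernstein--Sato argument directly over $D(\mathbf{X})[\lam]$ for arbitrary smooth affine $\mathbf{X}$. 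Your route is slightly more self-contained but obliges you to check that the dimension/holonomicity estimates from \cite{Ber}, originally phrased via the Bernstein filtration on the Weyl algebra, carry over with the geometric filtration on $D(\mathbf{X})$; this is standard (cf.\ \cite{Bor}), and the paper's embedding trick simply sidesteps it. The treatment of the vector-valued case and of the holonomicity of the Laurent coefficients is the same in both.
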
}
\begin{proof}
\DimaI{Since  tempered distributions form a sheaf, and so do holonomic tempered distributions, we can assume that $\bf X$ is affine.
For the case when $\bf X$ is the affine space $\A^n$ and $V=\C$, the theorem is \cite[Corollary 4.6 and Proposition 4.2(3)]{Ber}. The same proof works for $\X = \A^n$ and general $V$. \DimaJ{More precisely, this case follows from \cite[Proposition 4.4 and proof of Theorem 4.3]{Ber}. Indeed, let $N_{\xi}:=\xi D(\X)\subset \Sc^*(X,V)$ be the $D(\X)$-module generated by $\xi$. Let $M_p$ be the holonomic $D(\X)$-module defined in \cite[Definition 2.1]{Ber}
and let $M$ be the holonomic $D(\X)$-module $M_p\otimes N_{\xi}$. Define a morphism of $D(\X)$-modules $\Psi:M\to \Sc^*_{\Lambda}(X,V)$ by $\Psi(p^{\lambda}\otimes \zeta):=\zeta p^{\lambda}$. Adapting \cite[Proposition 4.4]{Ber} to this case we obtain the theorem.}

For general affine $\X$,} consider a closed embedding $i:\X\into \A^n$ and extend $\xi$ to a distribution $\eta$ on $\R^n$  by $\eta(f):=\xi(f\circ i)$. Also, extend $p$ to a polynomial $\DimaM{q}$ on $\R^n$ with real coefficients. By Lemma \ref{lem:hol} $\eta$ is holonomic, and thus the family $\eta_{\lam}\in \Sc^*_{\Lambda}(\R^n\DimaI{,V})$  defined by $\eta_{\lam}=\eta (\DimaM{q}^2)^{\lam/2}$ has a meromorphic continuation to the entire complex plane.


Note that for any polynomial $\DimaM{f}\in \cO(\R^n)$ that vanishes on $X$ and $\lam\in \bC_{>0}$  we have $\eta_{\lam}\DimaM{f}=0$. Thus this holds for all $\lam \in \C$ and thus the family $\eta_{\lam}$ can be restricted to a family $\xi_\lam$ of distributions on $X$. The family $\eta_{\lam}$ and all the coefficients of the its Laurent expansion at any $\lam \in \C$ are holonomic, and by Lemma \ref{lem:hol} the same holds for the family $\xi_{\lam}$  and all its Laurent coefficients.
\end{proof}

\DimaM{
\begin{remark}
The obtained meromorphic family becomes analytic after multiplication by an appropriate $\Gamma$ factor.
\end{remark}
}


\section{Main results}\label{sec:main}
In this section we fix a local field $F$ of characteristic zero and an algebraic $F$-variety $\bf X$.
 For any \DimaA{regular function} $p\in \cO(\X)$ we denote by $\X_{p}$ the  open subset
$$\X_{p}:=\{x \in \X \text{ with }p(x)\neq 0\}. $$ 
\DimaK{Denote also $X_p:=\X_p\cap \X(\R)$}.
Let a linear  algebraic $F$-group $\mathbf{Q}$ act (algebraically) on $\bf X$. Recall the categories $\cC(Q)$ and $\cC^{\mathrm{fin}}(Q)$ from  Notation \ref{not:C2}. Let $(\sigma,V) \in \cC(Q)$.
%
%

\begin{lemma}\label{lem:key}
Assume that $F=\R$ and $\X$ is smooth.
 Let $p\in \cO(\X)$ be  real-valued on $X$ and let \DimaK{$U\subset X$ be a Zariski open $Q$-invariant subset that includes $X_{p}$.} Let $\xi\in \Sc^*(U,V)$ be holonomic. Let $\psi$ be an algebraic character of $Q$ and assume that
\DimaL{ for all $\lam\in \C$ with $\re \lam$ big enough, $\xi |p|^\lam$ is $(Q,|\psi|^\lam\sigma)$-equivariant.}


\DimaK{Then there exists a meromorphic $(Q,|\psi|^{\lam}\sigma)$-equivariant holonomic family  \DimaM{$\{\eta_{\lam}\in \Sc^*(X,V)\, ,\, \lam \in \C\}$} 
such that $\eta_{\lam}|_{X_p}=(\xi|_{X_p})|p|^{\lam}$ and the constant term $\eta$ of $\eta_{\lam}$ is generalized $Q$-equivariant and satisfies  $\eta|_{X_p}=\xi|_{X_p}$.} \end{lemma}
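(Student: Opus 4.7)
My plan is to apply Bernstein's meromorphic continuation theorem (Theorem \ref{thm:Ber}) to a holonomic distribution on $X$ constructed from $\xi$, and then extract the desired properties from the Laurent expansion at $\lambda=0$. The key observation is that $|p|^\lambda=(p^2)^{\lambda/2}$, where $p^2$ is a non-negative polynomial on $X$, which places us in the setting of Theorem \ref{thm:Ber}.

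First, for $\re\lambda$ sufficiently large, I would construct $\eta_\lambda\in\Sc^*(X,V)$ as the extension by zero of $\xi\cdot|p|^\lambda$ from $U$ to $X$. Applying Corollary \ref{cor:ext} to $\xi$ provides an integer $n$ and a continuous extension $\xi':F^n(X,U)\to V^*$. Since $X\setminus U$ is contained in the zero set of $p$, for $\re\lambda$ large enough the map $f\mapsto f\cdot|p|^\lambda$ sends $\Sc(X)$ into $F^n(X,U)$, and I may define $\eta_\lambda(f):=\xi'(f\cdot|p|^\lambda)$. By construction $\eta_\lambda|_U=\xi\cdot|p|^\lambda$ and $\eta_\lambda$ vanishes off $U$, so it is the zero extension; since $U$ is $Q$-invariant and extension by zero preserves equivariance, the hypothesis gives that $\eta_\lambda$ is $(Q,|\psi|^\lambda\sigma)$-equivariant for these $\lambda$.

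Next I would show that $\eta_N$ is holonomic for some large $N$ and then apply Theorem \ref{thm:Ber}. The restriction $\eta_N|_{X_p}=\xi|_{X_p}\cdot|p|^N$ is holonomic because $\xi|_{X_p}$ is (holonomicity is preserved under restriction) and multiplication by $|p|^N$, which satisfies a first-order algebraic differential equation on $X_p$, preserves holonomicity; the transition from $X_p$ to $X$ is handled by Theorem \ref{thm:OpenPush} combined with Lemma \ref{lem:hol} and the vanishing of $\eta_N$ off $X_p$. Applying Theorem \ref{thm:Ber} to $\eta_N$ with the non-negative polynomial $p^2$ yields a meromorphic family $\eta_N\cdot(p^2)^{(\lambda-N)/2}=\eta_N\cdot|p|^{\lambda-N}$ on $\C$, all of whose Laurent coefficients are holonomic. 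This is our $\eta_\lambda$, and it coincides with the construction above for $\re\lambda\gg 0$.

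Finally, the equivariance identity $q\cdot\eta_\lambda=|\psi(q)|^\lambda\sigma(q)\eta_\lambda$ has both sides meromorphic in $\lambda$ and agrees for $\re\lambda\gg 0$, hence for all $\lambda\in\C$ by meromorphic continuation. Applying Lemma \ref{lem:terms} to the Laurent expansion of $\eta_\lambda$ at $\lambda=0$, with $E=\Sc^*(X,V)$ equipped with the combined $Q$-action coming from the action on $X$ and from $\sigma$, gives that the constant term $\eta$ is generalized $(Q,\sigma)$-equivariant. On $X_p$ the family $\eta_\lambda|_{X_p}=\xi|_{X_p}\cdot|p|^\lambda$ is analytic in $\lambda$ near $0$ (since $|p|^\lambda$ is smooth and nowhere vanishing on $X_p$) with value $\xi|_{X_p}$ at $\lambda=0$, so $\eta|_{X_p}=\xi|_{X_p}$. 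The main technical difficulty will be the holonomicity of $\eta_N$: transferring holonomicity across the boundary $X\setminus X_p$, where $\xi$ may not admit a direct extension, must be done via Theorem \ref{thm:OpenPush} and Lemma \ref{lem:hol} together with the high-order vanishing of $|p|^N$.
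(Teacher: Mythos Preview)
Your overall strategy coincides with the paper's: extend $\xi$ to $\xi'$ on $F^n(X,U)$ via Corollary~\ref{cor:ext}, multiply by a high power of $p$ to obtain a distribution on $X$, prove holonomicity, apply Theorem~\ref{thm:Ber}, and then use Lemma~\ref{lem:terms} on the Laurent expansion at $\lambda=0$. The paper first replaces $p$ by $p^2$ so that $|p|^n=p^n$ is an actual polynomial and sets $\eta_n(\phi):=\xi'(p^n\phi)$, which is exactly your $\eta_N$ for integer $N$.

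The place where your sketch is not yet a proof is the holonomicity of $\eta_n$ on all of $X$. Your appeal to Lemma~\ref{lem:hol} is misplaced: that lemma treats extensions by zero from smooth \emph{closed} subvarieties and equivariant distributions on transitive spaces, and neither hypothesis is available here. Theorem~\ref{thm:OpenPush} by itself only tells you that the localized module $N_p$ (with $N:=\eta_nD(\X)$) is holonomic over $D(\X)$; it does not produce differential operators on $\X$ annihilating $\eta_n$ before localization, and your phrase ``vanishing of $\eta_N$ off $X_p$'' is not a substitute, since $\eta_n$ typically has support meeting $Z(p)$ and is not determined by its restriction to $X_p$. The paper closes this gap with the ``moreover'' clause of Corollary~\ref{cor:ext} together with the Ore-type Lemma~\ref{lem:Ore}: for each generator $d_i$ of the annihilator of $\xi$ one finds $d_i'\in D(\X)$ with $p^nd_i'=d_ip^{n+\deg d_i}$, and then
\[
(\eta_nd_i')(\phi)=\xi'\bigl(d_i(p^{n+\deg d_i}\phi)\bigr)=0
\]
because $p^{n+\deg d_i}\phi\in F^{n+\deg d_i}(X,U)$. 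This shows that the localization $J_p$ of the annihilator $J$ of $\eta_n$ contains the annihilator of $\xi p^n$, whence $N_p$ is a quotient of a holonomic module, and Theorem~\ref{thm:OpenPush} then gives holonomicity of $N$. The missing ingredients in your plan are thus precisely Lemma~\ref{lem:Ore} and the ``moreover'' part of Corollary~\ref{cor:ext}.
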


\begin{proof}
Replacing $p$ by $p^2$ if needed we assume that $p$ is non-negative on $X$, and thus $\psi$ is positive on $Q$.

\DimaI{Since $F=\R$ and by definition of $\cC(Q)$, $V$ is a nuclear \Fre space}.
By Corollary \ref{cor:ext}, $\xi$ extends to a $(Q,\sigma)$-equivariant functional $\xi'$ on
$$F^n(X,U)= \{\phi \in \Sc(X,V)| \, \phi\equiv 0 \text{ on }X
\setminus U \text{ with first $n$ derivatives} \},$$
for some $n$, such that for any $d\in D(\bf X)$ we have
\begin{equation}\label{=0}
\text{if } \xi d =0 \text{ then } \xi' (d(F^{n+\deg d}(X,U)))=0.
\end{equation}
Define $\eta_n \in \Sc^*(X,V)$ by $\eta_n(\phi):=\xi'(p^n\phi)$. Note that $\eta_n$ is $(Q,\psi^{n}\sigma)$ - equivariant.


Let us show that $\eta_n$  is a holonomic distribution.
\DimaA{For this purpose we can assume that $\X$ is affine}.
Let $I$ be the annihilator ideal of $\xi$ in $D({\bf X})$, i.e. the ideal of all $d \in D(\bf X)$ with $\xi(d|_{\bf U})=0$.
Let $d_1,\dots d_l$ be a finite set of generators of $I$
Let $J$ denote the annihilator ideal of $\eta_n$ in $D(\X)$. By Lemma \ref{lem:Ore}, for any $i\leq l$ we can find  $d_i'\in D(\bf X)$ such that  $d_ip^{n+\deg d_i}=p^nd_i'$.
 Then for any $\phi\in \Sc(X,V)$ we have
 $$(\eta_nd_i')(\phi)=\eta_n(d_i'(\phi))=\xi'(p^{n}d_i'(\phi))=\xi'(d_i(p^{n+\deg d_i}\phi)).$$
Since $p^{n+\deg d_i}f\in F^{n+\deg d_i}(X,U)$, from \eqref{=0} we have $d_i'\in J$. Thus the localization $J_p$ of $J$ includes $p^{-n}d_i$ for all $i$. Note that $\{p^{-n}d_i\}$ generate the ideal $p^{-n}ID({\bf U})$, which is the annihilator of $\xi p^n$ in $D({\bf U})$. Since $\xi p^n$ is holonomic, we get that $D({\bf U})/J_p$ is holonomic.
Now,   $D({\bf U})/J_p=\DimaJ{N}_p$, where $\DimaJ{N}:=\eta_nD(\bf X)$. Thus $\DimaJ{N}_p$ is holonomic, and Theorem \ref{thm:OpenPush} implies that $N$ is holonomic and thus so is $\eta_n$.

Consider the analytic family of equivariant distributions $\eta_{\lam}:=\eta_{n}|p|^{\lambda-n}$ defined for $\re{\lam}$ big enough.  It is easy to see that this family is  $(Q,\psi^{\lam}\sigma)$-equivariant.
By Theorem \ref{thm:Ber} the family $\eta_{\lambda}$ has a meromorphic continuation to the entire complex plane. Now, define $\eta$ to be the constant term of this family. Note that $\eta|_{X_{p}}=p^0\xi|_{X_p}=\xi|_{X_p}$. By Lemma \ref{lem:terms}, $\eta$ is generalized $(Q,\sigma)$-equivariant.
\end{proof}

\begin{rem}
The distribution $\eta$ gives rise to $Q$-equivariant distributions on $X$. However, the restrictions of these distributions to $U$ might vanish. {\it E.g.}  in the situation of Example \ref{ex:R}, all $Q$-invariant distributions on $X$ are supported at the origin. Note also that the temperedness assumption on $\xi$ is necessary \DimaJ{even for the trivial group action. {\it E. g.} the distribution $\exp(1/x)dx$ on $\R^{\times}$ does not extend to a distribution on $\R$.}
\end{rem}

\DimaC{Note that complex algebraic groups and varieties can be viewed as $\R$-points of algebraic groups and varieties defined over $\R$.}

Let us now formulate and prove our main results.

\begin{thm}\label{thm:GenA}
Assume that $\mathbf{Q}$ is quasi-elementary, and \DimaA{$\X$ is quasi-affine}.
Assume that there exists a $Q$-orbit $O\subset X$ that admits a tempered \DimaK{holonomic} $Q$-equivariant $\DimaD{V^*}$-valued measure $\mu$. \DimaC{If $F$ is Archimedean we assume in addition that $X$ is smooth.}
Then there exists a generalized $Q$-equivariant \DimaK{holonomic distribution} $\eta\in \Sc^*(X,V)$ and a Zariski open $Q$-invariant neighborhood $U$ of $O$ such that the restriction of $\eta$ to $U$ equals the extension of $\mu$ to $U$ by zero.
\end{thm}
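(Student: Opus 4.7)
The strategy is to apply Lemma \ref{lem:key} (in the Archimedean case) or Theorem \ref{thm:padic} (in the $p$-adic case) to an extension of $\mu$ by zero onto a Zariski open $Q$-invariant neighborhood of $O$. The key technical ingredient is a $\mathbf{Q}$-equivariant regular function $p\in\cO(\X)$ defined over $F$ that vanishes on $\bar{\mathbf{O}}\setminus\mathbf{O}$ but not on $\mathbf{O}$, where $\mathbf{O}\subset\X$ denotes the $\mathbf{Q}$-orbit whose $F$-points contain $O$ and $\bar{\mathbf{O}}$ its Zariski closure.

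First I would reduce to the case where $\X$ is irreducible and $F$-dense by passing to the irreducible component of $\X$ containing $\mathbf{O}$; this component is $F$-dense by Proposition \ref{prop:large} since $\mathbf{O}$ has an $F$-point. To construct $p$, apply Corollary \ref{cor:pExists} to the quasi-affine irreducible subvariety $\bar{\mathbf{O}}$ (with $\mathbf{O}$ as its open $\mathbf{Q}$-orbit) to obtain a $\mathbf{Q}$-equivariant $\tilde p\in\cO(\bar{\mathbf{O}})$ defined over $F$, of some character $\psi$, such that $\mathbf{O}=\bar{\mathbf{O}}_{\tilde p}$. Then lift $\tilde p$ to a $\mathbf{Q}$-equivariant $p\in\cO(\X)$ defined over $F$: apply Proposition \ref{prop:KK} to the $\mathbf{Q}$-stable ideal
\[
J:=\{f\in\cO(\X):f|_{\bar{\mathbf{O}}}\in\tilde p\cdot\cO(\bar{\mathbf{O}})\}
\]
to extract a $\mathbf{Q}$-equivariant element, verifying (possibly after replacing $\tilde p$ by a suitable power to shift its character) that such an element can be chosen with non-zero restriction to $\bar{\mathbf{O}}$, equivalently, non-vanishing on the single orbit $\mathbf{O}$. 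Setting $U:=X_p$ then produces a $Q$-invariant Zariski open neighborhood of $O$ with $U\cap\bar O=O$, so $O$ is closed in $U$; extending $\mu$ by zero yields a tempered $(Q,\sigma)$-equivariant distribution $\xi\in\Sc^*(U,V)$, holonomic by Lemma \ref{lem:hol}(i) since $\mu$ is.

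Finally, in the Archimedean case replace $p$ by $p\bar p$ so that it is non-negative and real-valued on $X$ (this preserves $\mathbf{Q}$-equivariance, now with character $|\psi|^2$, and the same zero locus), and apply Lemma \ref{lem:key} to obtain a meromorphic family $\eta_\lambda$ of holonomic tempered distributions on $X$ whose constant term $\eta$ is generalized $(Q,\sigma)$-equivariant, holonomic, and satisfies $\eta|_U=\xi$, as required. In the $p$-adic case, Theorem \ref{thm:padic} plays the role of Lemma \ref{lem:key}. I expect the principal technical obstacle to be the equivariant lift of $\tilde p$ from $\bar{\mathbf{O}}$ to $\X$: although Proposition \ref{prop:KK} guarantees $\mathbf{Q}$-equivariant elements in $J$, ensuring that one such element restricts non-trivially to $\bar{\mathbf{O}}$ amounts to a splitting-type question for the short exact sequence $0\to J\cap I(\bar{\mathbf{O}})\to J\to\tilde p\cdot\cO(\bar{\mathbf{O}})\to 0$ on the relevant isotypic component, and resolving it will require an additional argument exploiting the quasi-elementary structure of $\mathbf{Q}$ (for instance, iterating Proposition \ref{prop:KK} on a refinement of $J$ or raising $\tilde p$ to a sufficiently high power).
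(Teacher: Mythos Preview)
Your overall plan is right, but the obstacle you flag---lifting $\tilde p$ to a $\mathbf{Q}$-equivariant $p\in\cO(\X)$---is one the paper simply sidesteps, and you should too. The point is that Lemma \ref{lem:key} does \emph{not} require $p$ to be $\mathbf{Q}$-equivariant on $\X$; it only requires that $\xi|p|^\lambda$ be $(Q,|\psi|^\lambda\sigma)$-equivariant. Since $\xi$ is the extension of $\mu$ by zero and is therefore supported on $O\subset Z$, this equivariance is automatic for \emph{any} lift $p\in\cO(\X)$ of the equivariant $q:=\tilde p\in\cO(\mathbf{Z})$: for $g\in Q$ the function $|\psi(g)|^\lambda|p|^\lambda-|gp|^\lambda$ vanishes on $Z$ (because $p|_Z=q$ is equivariant), hence is annihilated by $\xi$. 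Example \ref{exm:ExtDist} in the paper illustrates exactly this phenomenon with a non-equivariant $p$.

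Correspondingly, you should not take $U:=X_p$ (which need not be $Q$-invariant when $p$ is not equivariant) but rather $U:=(X\setminus Z)\cup O$, where $Z$ is the Zariski closure of $O$ in $X$. This $U$ is $Q$-invariant because $Z$ and $O$ are; it contains $X_p$ because $p|_Z=q$ vanishes on $Z\setminus O$; and $O$ is closed in $U$, so $\mu$ extends by zero to $\xi\in\Sc^*(U,V)$. Now Lemma \ref{lem:key} applies directly with this $U$ and any $F$-lift $p$ of $q$. In the non-Archimedean case the paper applies Theorem \ref{thm:padic} on $Z$ itself (where $\mathbf{O}$ is the open orbit and $q$ the equivariant function) to extend $\mu$ to $Z$, and then extends by zero to $X$; your formulation would need the equivariant lift to apply it on $X$, which again is unnecessary.

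In short: drop the splitting argument and the insistence on an equivariant $p$; the support condition on $\xi$ does all the work.
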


%
\begin{proof}

\DimaA{Let $\bf Z$ be the Zariski closure of $O$ in $\X$.}
By Corollary \ref{cor:pExists}, there exists an algebraic character $\psi$ of $\bfQ$ and a non-zero $(\bfQ ,\psi)$-equivariant \DimaA{regular function} $q\in \cO({\bf Z})$ defined over $F$.

If $F$ is non-Archimedean then, by Theorem \ref{thm:padic}, $\mu$ can be extended to a generalized $Q$-equivariant $\DimaD{V^*}$-valued distribution on $Z:={\bf Z}(F)$. Extending this distribution by zero we obtain a generalized $Q$-equivariant $\eta\in \Sc^*(X,V)$.

Now assume that $F$ is Archimedean and $X$ is smooth.
\DimaK{
Let $U:=(X\setminus Z)\cup O$.
 Note that $O$ is closed in $U$ and extend $\mu$ by zero to $\xi \in \Sc^*(U,V)$.
 Lift $q$ to \DimaA{a regular function $p\in \cO(\X)$ defined over $F$}.
\DimaL{Then $\xi |p|^\lam $ is $(Q,|\psi|^\lam\sigma)$-equivariant for all $\lam$ with $\re \lam> 0$.
Indeed, for any $g\in Q$, $|\psi(g)|^\lam |p|^\lam-|gp|^\lam$ vanishes on $Z\cap U$, thus $\xi(|\psi(g)|^\lam |p|^\lam-|gp|^\lam)=0$ and thus $g(\xi |p|^\lam)=\sigma(g)|\psi(g)|^\lam\xi |p|^\lam$.}}
By Lemma \ref{lem:hol} $\xi$ is holonomic.
From Lemma \ref{lem:key} we obtain a generalized $(Q,\sigma)$-equivariant extension of $\xi$ to $X$.
\end{proof}
\DimaK{
\begin{example}\label{exm:ExtDist}
Let  $\bfQ=$ upper triangular $2\times 2$ invertible matrices, $\X=\A^2$, and let $O=F^{\times}\times \{0\}$. Then $Z=F\times \{0\}$, and $U=(F^2)^{\times}$.
We can take $p(x,y)=q(x)=x$. Then $q$ is $Q$-equivariant, but $p$ is not. Still, if we take $\mu$ to be a Haar measure on $F^{\times}$  and extend it by zero to $\xi \in \Sc^*(U)$ then \DimaL{$\xi |p|^\lam$ is $Q$-equivariant for any $\lam$ with $\re \lam > 0$ and extends naturally to $F^2$ for $\re \lam > 1$. This family also has meromorphic continuation to the whole complex plane. At $\lam=0$ the continued family has a simple pole, with the residue equal to the delta-function at the origin.}
\end{example}
\DimaE{The following theorem generalizes Theorem \ref{thm:A}.}
\begin{thm}\label{thm:GenA0}
Assume that $\mathbf{Q}$ is quasi-elementary and $\X$ is quasi-projective.
Assume that there exists a $Q$-orbit $O\subset X$ that admits a tempered $Q$-equivariant holonomic $V$-valued measure $\mu$. If $F$ is archimedean we assume in addition that $X$ is smooth.
Then $\Sc^*(X,V)^Q\neq 0$.
\end{thm}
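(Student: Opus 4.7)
The strategy is to reduce to the quasi-affine case already handled by Theorem \ref{thm:GenA}, via an affine cone construction together with the Frobenius-type identification of Lemma \ref{lem:PrFrob}. By Proposition \ref{prop:QP}, fix a $\bfQ$-equivariant locally closed embedding $\X\into\bP(\W)$ for some algebraic $F$-representation $\W$ of $\bfQ$; in the non-Archimedean case one may first pass to the normalization of $\X$, since smoothness is not assumed there. Let $\pr\colon\W^{\times}\to\bP(\W)$ denote the projection and set $\X':=\pr^{-1}(\X)$, an $F^{\times}$-torsor over $\X$. Then $\X'$ is quasi-affine (being locally closed in $\W$), smooth whenever $\X$ is, and carries commuting actions of $\bfQ$ and $\mathbb{G}_m$, combining to an action of $\bfQ\times\mathbb{G}_m$. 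The product $\bfQ\times\mathbb{G}_m$ remains quasi-elementary: if $\bfQ$ is solvable then so is $\bfQ\times\mathbb{G}_m$, and over $\R$ the additional $\R^{\times}$-factor joins the solvable part of $\bfQ(\R)$.

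Next I would lift the given measure. Applying Lemma \ref{lem:PrFrob} to the locally closed subvariety $O\subset\bP(\W)$ produces a natural $Q$-equivariant isomorphism $\Sc^*(O,V)\simeq\Sc^*(O',V)^{F^{\times}}$, preserving temperedness and holonomicity, where $O':=\pr^{-1}(O)$ is a single $(Q\times F^{\times})$-orbit. Thus $\mu$ lifts to a nonzero tempered holonomic $(Q\times F^{\times})$-equivariant measure $\mu'$ on $O'$, with $F^{\times}$ acting trivially on $V$.

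The main obstacle is that Theorem \ref{thm:GenA} applied to $(\X',\bfQ\times\mathbb{G}_m,O',\mu')$ supplies only a \emph{generalized} equivariant distribution, whereas Theorem \ref{thm:GenA0} demands a genuine one. I would resolve this by unwinding the proof of Theorem \ref{thm:GenA}: both in the Archimedean case via Lemma \ref{lem:key} and in the non-Archimedean case via the ``moreover'' clause of Theorem \ref{thm:padic}, one obtains a nonzero meromorphic family $\eta'_\lambda\in\Sc^*(X',V)$ of $(Q\times F^{\times},|\psi|^\lambda\sigma)$-equivariant distributions whose restriction to $X'_p$ equals $\mu'|p|^\lambda$ for $\re\lambda$ large, where $p$ and $\psi$ are the equivariant polynomial and character on the closure of $O'$ in $\X'$ supplied by Corollary \ref{cor:pExists}. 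The first nonzero coefficient $\zeta$ of the Laurent expansion of $\eta'_\lambda$ at $\lambda=0$ is honestly $(Q\times F^{\times},\sigma)$-equivariant by Lemma \ref{lem:terms}. Transporting $\zeta$ back through the $Q$-equivariant isomorphism $\Sc^*(X,V)^Q\simeq\Sc^*(X',V)^{Q\times F^{\times}}$ of Lemma \ref{lem:PrFrob} yields a nonzero element of $\Sc^*(X,V)^Q$.
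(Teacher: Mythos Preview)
Your reduction is exactly the one the paper carries out: embed $\X$ equivariantly into $\bP(\W)$ via Proposition \ref{prop:QP}, pass to the affine cone $\X'=\pr^{-1}(\X)$, lift $\mu$ to a $(Q\times F^{\times})$-equivariant measure on $O'=\pr^{-1}(O)$ via Lemma \ref{lem:PrFrob}, apply the quasi-affine Theorem \ref{thm:GenA} to $\X'$ with the quasi-elementary group $\bfQ\times\mathbb{G}_m$, and transport back.

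The only divergence is in how you pass from a \emph{generalized} equivariant distribution to an honest one. You unwind the proof of Theorem \ref{thm:GenA}, extract the meromorphic family $\eta'_\lambda$, and take the leading Laurent coefficient at $\lambda=0$, invoking Lemma \ref{lem:terms}. This is correct, but it is unnecessary. The paper instead uses the elementary observation that the existence of a non-zero generalized $(Q',\sigma)$-equivariant distribution automatically forces $\Sc^*(X',V)^{Q'}\neq 0$: if $\eta$ is generalized invariant of minimal order $k$, choose $g_1,\dots,g_{k-1}\in Q'$ with $(\pi(g_1)-\Id)\cdots(\pi(g_{k-1})-\Id)\eta\neq 0$; this vector is then genuinely invariant. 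With that remark in hand, one can cite Theorem \ref{thm:GenA} as a black box and avoid reopening its proof in both the Archimedean and $p$-adic cases. Your route buys nothing extra here, though it would if one cared about the relationship between the resulting distribution and the original $\mu$; the paper's remark immediately after Theorem \ref{thm:GenA0} makes precisely this distinction.

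Two minor comments. First, your justification that $\bfQ\times\mathbb{G}_m$ is quasi-elementary is imprecise (quasi-elementary is not the same as ``compact-by-solvable over $\R$''); the clean argument is that every Borel of $\bfQ\times\mathbb{G}_m$ has the form $B\times\mathbb{G}_m$, so every $F$-parabolic is $P\times\mathbb{G}_m$ with $P$ an $F$-parabolic of $\bfQ$. Second, you are right to flag the normality hypothesis in Proposition \ref{prop:QP} for non-Archimedean $F$; the paper applies it without comment, and your suggestion to pass to the normalization would need an argument relating $\Sc^*$ on $X$ and on its normalization.
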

\DimaA{
\begin{proof}
For quasi-affine $\X$, the theorem follows from Theorem \ref{thm:GenA}, since the existence of a non-zero generalized equivariant distribution implies the existence of a non-zero  equivariant distribution.

For general quasi-projective $\X$, there exist, by Proposition \ref{prop:QP}, an algebraic (finite-dimensional) $F$-representation $\mathbf{W}$ of $\mathbf{Q}$ and a (locally closed) $\bf Q$-equivariant embedding of $\X$ into the projective space $\mathbb{P}(\W)$. Let  $\X':=pr^{-1}(\X)\subset \W$, where $pr:\W \setminus \{0\}\to \bP(\W)$ is the natural projection. By Lemma \ref{lem:PrFrob} we have an isomorphism of $Q$-representations
\begin{equation}\label{=Frob2}
\Sc^*(X,V)\simeq \Sc^*(X',V)^{F^{\times}}.
\end{equation} 
Let $\bfQ ':=\bfQ \times {\bf GL_1}$. Note that $\bfQ '$ is quasi-elementary and  $\X '$ is $\bfQ '$-invariant. Let $\chi'$ be the trivial extension of $\chi$ to $Q'$. Let $O':=pr^{-1}(O)$ and let $\mu'$ be the $(Q',\chi')$-equivariant measure on $O$ corresponding to $\mu$.
Since $\X'$ is quasi-affine, we have  $\Sc^*(X',V)^{Q\times F^{\times}}\neq 0$. By \eqref{=Frob2} this implies $\Sc^*(X,V)^{Q}\neq 0$.
\end{proof}
}

\begin{remark}
\DimaC{The argument in the proof of Theorem \ref{thm:GenA0} almost shows that
 Theorem \ref{thm:GenA} extends to quasi-projective varieties, except that in the case of archimedean $F$ we cannot yet show that the extended distribution $\eta$ is tempered. Let us sketch the line of reasoning.
As in the proof of Theorem \ref{thm:GenA}, we define a meromorphic family of equivariant distributions on the quasi-affine variety $X'$. Since this family is $F^{\times}$-equivariant,}
\DimaB{it defines a family of generalized sections of a family of analytic vector bundles on $X$, and we take the constant term of this family. However, we will not know that this constant term is tempered, since we do not know that the family is tempered, since we do not yet have a \DimaC{theory} of Schwartz sections of a non-Nash bundle. One of our students, Ary Shaviv, is currently working on such a notion.}
\end{remark}

}

Let $\mathbf{G}$ be a \DimaD{Zariski connected} linear algebraic $F$-group. Let $\bfQ,\bfH\subset \G$  be $F$-subgroups such that $\bfQ$ is quasi-elementary.
Consider the action of $\bfQ\times \bfH$ on $\G$ given by left multiplication by $\bfQ$ and right multiplication by $\bfH$.
 \DimaE{The following theorem generalizes Theorem \ref{thm:B}.}

\begin{theorem}\label{thm:GenB}
Let $\mu$ be a tempered $Q\times H$-equivariant \DimaK{holonomic} $\DimaD{V^*}$-valued measure on a double coset $QgH\subset G$. \DimaC{Let $(\sigma,V)\in \cC(Q\times H)$.}


Then there exists a generalized $Q\times H$-equivariant  \DimaK{holonomic} $\eta\in \Sc^*(G,V)$ and a Zariski open $Q\times H$-invariant neighborhood $U$ of $QgH$ such that the restriction of $\eta$ to $U$ equals the extension of $\mu$ to $U$ by zero.

Moreover, the dimension of $\Sc^*(G,V)^{Q\times H}$ is at least  the number of $Q\times H$-double cosets in $({\bf Q}g{\bf H})(F)$ possessing non-zero $Q\times H$-equivariant $\DimaD{V^*}$-valued tempered measures.

\end{theorem}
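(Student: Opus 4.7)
The proof will follow the same two-step pattern as Theorem~\ref{thm:GenA}: first obtain a $\bfQ\times\bfH$-equivariant polynomial on the orbit closure that vanishes off the orbit, then apply a meromorphic-continuation result to build an equivariant extension from the given measure. The only change from Theorem~\ref{thm:GenA} is that Corollary~\ref{cor:pExists} (which required quasi-elementarity of the full acting group) is replaced by Corollary~\ref{cor:pQH}, which supplies the needed equivariant polynomial for the two-sided action of $\bfQ\times\bfH$ on $\G$ even though $\bfH$ may be arbitrary. Concretely, let $\bfZ\subset\G$ be the Zariski closure of $\bfQ g\bfH$. By Corollary~\ref{cor:pQH} there exist an algebraic character $\psi$ of $\bfQ\times\bfH$ and a non-zero $(\bfQ\times\bfH,\psi)$-equivariant polynomial $q$ on $\bfZ$, defined over $F$ and vanishing outside $\bfQ g\bfH$. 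Lift $q$ to a regular function $p\in\cO(\G)$ and set $U := G\setminus(\bfZ\setminus\bfQ g\bfH)(F)$, a Zariski open $\bfQ\times\bfH$-invariant subset of $G$ that contains $G_p$, inside which every $Q\times H$-orbit of $(\bfQ g\bfH)(F)$ is analytically closed (the finitely many $Q\times H$-orbits partitioning the smooth manifold $(\bfQ g\bfH)(F)$ are all open, hence also closed, in it).

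For each $Q\times H$-orbit $Qg_iH\subset(\bfQ g\bfH)(F)$ admitting a non-zero tempered $(Q\times H,\sigma)$-equivariant holonomic $V^*$-valued measure $\mu_i$, extend $\mu_i$ by zero to $\xi_i\in\Sc^*(U,V)$ (holonomic by Lemma~\ref{lem:hol}(i)), and apply Lemma~\ref{lem:key} (for Archimedean $F$) or Theorem~\ref{thm:padic} together with the appropriate version of \cite{HS} (for non-Archimedean $F$) to the pair $(\xi_i,p)$. Either way one obtains a meromorphic family $\eta_{i,\lambda}\in\Sc^*(G,V)$ of $(Q\times H,|\psi|^\lambda\sigma)$-equivariant holonomic distributions such that $\eta_{i,\lambda}|_{G_p}=\xi_i|_{G_p}\cdot|p|^\lambda$ and whose constant term at $\lambda=0$ is a generalized $(Q\times H,\sigma)$-equivariant holonomic extension of $\xi_i$ to all of $G$. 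Applied to a single orbit this gives the first assertion.

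For the ``moreover'' part, enumerate as $Qg_1H,\dots,Qg_kH$ all the $Q\times H$-orbits in $(\bfQ g\bfH)(F)$ that support such measures $\mu_i$, and let $\eta_{i,\lambda}$ be the corresponding families. Since $(\bfQ g\bfH)(F)\subset G_p$ and $\xi_i$ equals $\mu_i$ on $Qg_iH$ and $0$ on $Qg_jH$ for $j\neq i$, the distributions $\mu_i|p|^\lambda$ have pairwise disjoint $F$-analytic supports in $G_p$. Hence any relation $\sum_i c_i(\lambda)\eta_{i,\lambda}=0$ in $\Sc^*_\Lambda(G,V)$ with $c_i\in\C((\lambda))$ can, after clearing denominators so that the $c_i$ become holomorphic in some right half-plane, be specialized at a generic $\lambda_0$ in that half-plane to give
\[
\sum_i c_i(\lambda_0)\,\mu_i|p|^{\lambda_0}\;=\;0\quad\text{in }\Sc^*(G_p,V).
\]
Testing this against compactly supported smooth bump functions concentrated in pairwise disjoint neighbourhoods of the orbits $Qg_jH$ forces $c_j(\lambda_0)=0$, hence $c_j\equiv 0$ for every $j$. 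Therefore the $\C((\lambda))$-span $L$ of $\{\eta_{i,\lambda}\}$ has $\C((\lambda))$-dimension $k$, and by Lemma~\ref{lem:Spec} the $\C$-space $W\subset\Sc^*(G,V)$ of leading coefficients of elements of $L$ has dimension $k$; each element of $W$ is genuinely $(Q\times H,\sigma)$-equivariant by Lemma~\ref{lem:terms}, yielding $\dim\Sc^*(G,V)^{Q\times H}\geq k$.

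The main obstacle is the careful bookkeeping between the single $\bar F$-orbit $\bfQ g\bfH$ and the possibly several $F$-orbits $Qg_iH$ it contains: one must verify that these $F$-orbits are open (hence also closed) submanifolds of $(\bfQ g\bfH)(F)$, so that their $F$-analytic separability supports both the definition of the zero-extensions $\xi_i$ and the bump-function test of linear independence. A secondary, purely technical point concerns the non-Archimedean case: one needs a version of Theorem~\ref{thm:padic}, available through \cite{HS}, that produces a meromorphic family with the required restriction to $G_p$ for general $V\in\cC^{\mathrm{fin}}(Q\times H)$ rather than just for characters.
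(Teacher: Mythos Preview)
Your proof is correct and follows essentially the same approach as the paper: invoke Corollary~\ref{cor:pQH} to produce the equivariant polynomial $q$ on the closure $\bfZ$, then feed the zero-extension of $\mu$ into Lemma~\ref{lem:key} (Archimedean) or Theorem~\ref{thm:padic} (non-Archimedean), and finish the ``moreover'' part via Lemmas~\ref{lem:Spec} and~\ref{lem:terms}. Your treatment of the open set $U$ and of the several $F$-orbits inside $(\bfQ g\bfH)(F)$ is in fact more explicit than the paper's; the only cosmetic point is that in the linear-independence step you should phrase everything in terms of Laurent expansions at $\lambda=0$ (so that Lemma~\ref{lem:Spec} applies directly) rather than mixing formal series $c_i\in\C((\lambda))$ with specialization at a point in a right half-plane.
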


\begin{proof}
By Corollary \ref{cor:pQH}, there exists an algebraic character $\psi$ of $\bfQ\times \bfH$ and a non-zero $(\bfQ\times \bfH,\psi)$-equivariant $F$-polynomial $q$ on the Zariski closure ${\bf Z}$ of the double coset $\bfQ g \bfH$ \DimaD{that vanishes outside $\bfQ g \bfH$}.
Extend $\mu$ by zero to a $\bfQ \times \bfH$-equivariant $\DimaD{V^*}$-valued measure on $\bfQ g \bfH(F)$, that we will also denote by $\mu$.

If $F$ is non-Archimedean then, by Theorem \ref{thm:padic}, $\mu$ can be extended to a generalized $Q\times H$-invariant $\DimaD{V^*}$-valued distribution on $Z:={\bf Z}(F)$. Extending this distribution by zero we obtain a generalized $Q\times H$-invariant distribution $\eta$ on $G$.

Now assume that $F$ is Archimedean. \DimaK{Let $U:=(G \setminus Z)\cup QgH$
and let $\xi\in \Sc^*(U,V)$ be the extension of $\mu$ by zero. Lift $q$ to an $F$-polynomial $p$ on $G$ and} \DimaL{note that for all $\lam$ with $\re \lam$ big enough, $\xi |p|^\lam$ is $(Q\times H,|\psi|^\lam\sigma)$-equivariant}. By Lemma \ref{lem:hol}, $\xi$ is holonomic.
 Applying Lemma \ref{lem:key} to $p,\xi, \G$ and $\bfQ g \bfH$, we obtain a generalized equivariant extension $\eta$ of $\xi$ to $\Sc^*(G,V)$.

To prove the ``moreover" part, let $g_1,\dots g_n\in  (\bfQ g \bfH)(F)$ such that the double cosets $Qg_iH$ are distinct and posses invariant measures $\mu_1,\dots \mu_n$. These measures extend by zero to linearly independent distributions $\xi_i$ on $U$.  Applying Lemma \ref{lem:key} we construct meromorphic families $\xi_{i,\lambda}$ of distributions on $G$. It is easy to see that the support of each family $\xi_{i,\lambda}$  lies in the closure of $Qg_iH$ and thus the families are linearly independent. Let $L$ denote the linear span of \DimaD{the Laurent expansions of these families at $\lambda=0$} and let $W\subset \Sc^*(G,V)$ denote the space spanned by the leading coefficients of the \DimaD{series} in $L$. By Lemma \ref{lem:Spec}, $\dim W=\dim L=n$ and by Lemma \ref{lem:terms} all the distributions in $W$ are $Q\times H$-equivariant.
\end{proof}


\begin{rem}
The lower bound on  dimension as in Theorem \ref{thm:GenB} can be shown to hold under the conditions of Theorem \ref{thm:GenA} as well.
\end{rem}

\begin{proof}[Proof of Corollary \ref{cor:Prin}]
\DimaD{By Lemma \ref{lem:Frob} 
the double coset $P_0gH$ has a non-zero tempered $P_0\times H$ equivariant $\sigma\Dima{^*}$-valued measure. By Theorem \ref{thm:B},
this implies that $\Sc^*(G,\sigma)^{P_0\times H}\neq 0$, and thus $\Sc^*(G,\sigma)^{P_0 \times H}\neq 0$. By Lemma \ref{lem:Integ}, $\Sc^*(G,\sigma)^{H\times P_0}$ is isomorphic to the space of $H$-invariant continuous functionals on $\Ind_{P_0}^G(\sigma)$.}
\end{proof}

\DimaM{
As we will show later, the following is a generalization of Proposition \ref{prop:ParAdapt}.

\begin{prop}
\label{prop:ParSpher} Let $\mathbf{G}$ be a linear algebraic $F$-group, let $\mathbf{H,P\subset G}$ be $F$-subgroups, let $C:=H\cap P$,  and let
$\sigma \in {\mathcal{M}}(P)$. Suppose that

\begin{enumerate}

\item $\mathbf{P}$ is a parabolic subgroup, and the complement $G\setminus HP
$ is the zero set of an $(H\times P,1\times \psi )$-equivariant $F$-polynomial $f$ on $G$.

\item \label{it:spher} either $\sigma$ is finite-dimensional or $\mathbf{H}$ has finitely many orbits on the flag variety of minimal
parabolic $F$-subgroups of $\mathbf{G}$.

\item $\sigma $ admits a non-zero $(C,\Delta _{C}\Delta _{H}^{-1})$
-equivariant continuous {linear} functional.
\end{enumerate}

Then $\Ind_{P}^{G}(\sigma )$ admits a non-zero $H$-invariant
{continuous linear} functional.
\end{prop}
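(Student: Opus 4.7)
My plan is to apply Lemma \ref{lem:Integ} to recast the goal: an $H$-invariant continuous linear functional on $\Ind_P^G(\sigma)$ is the same as a non-zero $(H\times P)$-equivariant element of $\Sc^*(G,\sigma\otimes\Delta_P^{-1})$, with $H$ acting by left translation and $P$ acting by right translation. The open orbit of $H\times P$ on $G$ is precisely $HP$, which is identified with $(H\times P)/\Delta C$ where $\Delta C\subset H\times P$ is the diagonally embedded copy of $C$. Frobenius reciprocity in the spirit of Lemma \ref{lem:Frob} then converts tempered $(H\times P)$-equivariant $(\sigma\otimes\Delta_P^{-1})^*$-valued measures on $HP$ into $(C,\Delta_C\Delta_H^{-1})$-equivariant continuous functionals on $\sigma$; hypothesis (3) therefore produces a non-zero such measure $\mu$ on $HP$.

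In the finite-dimensional alternative of hypothesis \ref{it:spher}, the coefficient bundle is finite-dimensional, so Lemma \ref{lem:hol} shows that the zero extension of $\mu$ to any $(H\times P)$-invariant Zariski open neighborhood of $HP$ in $G$ is holonomic. I would then apply Lemma \ref{lem:key} (in the Archimedean case) with acting group $\mathbf{H}\times\mathbf{P}$, polynomial $p:=f\bar f$ (non-negative on the real points, vanishing exactly on the complement of $HP$, and $(H\times P)$-equivariant with character $|\psi|^{2}$), and initial distribution $\mu$. The lemma outputs a meromorphic family of $(H\times P,\,\sigma|\psi|^{2\lambda})$-equivariant holonomic distributions on $G$ whose constant term extends $\mu$ and is generalized $(H\times P)$-equivariant; Lemmas \ref{lem:terms} and \ref{lem:Spec} then guarantee that the leading Laurent coefficient at $\lambda=0$ is a non-zero honest $(H\times P)$-equivariant tempered distribution on $G$. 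In the non-Archimedean case Theorem \ref{thm:padic} furnishes the same conclusion directly.

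For the infinite-dimensional alternative of hypothesis \ref{it:spher}, I would invoke the Casselman embedding theorem (Theorem \ref{thm:CasSubRep}) to select a minimal parabolic subgroup $Q$ of $G$ contained in $P$, a finite-dimensional $\tau\in\cM(Q)$, and a continuous $P$-equivariant surjection $\pi:\Ind_{Q}^{P}(\tau)\twoheadrightarrow\sigma$. The composition of $\pi$ with the functional provided by hypothesis (3) is a $(C,\Delta_C\Delta_H^{-1})$-equivariant functional on $\Ind_{Q}^{P}(\tau)$. Finiteness of $\mathbf{H}$-orbits on $\mathbf{G}/\mathbf{Q}$ forces $\mathbf{C}$ to have an open orbit on $\mathbf{P}/\mathbf{Q}$; fixing a representative $p_0\in P$ and applying Frobenius reciprocity on this orbit transforms the preceding functional into a $(C\cap p_0Qp_0^{-1},\,\Delta_{C\cap p_0Qp_0^{-1}}\Delta_H^{-1})$-equivariant functional on the twist $\tau^{p_0}$. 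Since $p_0Qp_0^{-1}$ is still a minimal parabolic of $G$, the set $G\setminus H(p_0Qp_0^{-1})$ is the zero locus of an equivariant polynomial by the matrix-coefficient recipe of Example \ref{exm:grpCase}, so the finite-dimensional case just established applies to $(\tau^{p_0},\,p_0Qp_0^{-1})$ and delivers an $H$-invariant functional on $\Ind_{p_0Qp_0^{-1}}^G(\tau^{p_0})\cong\Ind_Q^G(\tau)$. By induction in stages $\Ind_{Q}^{G}(\tau)\cong\Ind_{P}^{G}(\Ind_{Q}^{P}(\tau))$; by construction this $H$-invariant functional is, on the open $(H\times P)$-orbit $HP$, the pull-back under $\pi$ of the functional of hypothesis (3), so it descends through $\Ind_{P}^{G}(\pi)$ to the desired non-zero $H$-invariant functional on $\Ind_{P}^{G}(\sigma)$.

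The hard part will be the final factoring-through step: one must verify that the $H$-invariant functional supplied by Lemma \ref{lem:key} on $\Ind_{Q}^{G}(\tau)$ actually vanishes on the kernel of $\Ind_{P}^{G}(\pi)$. The needed rigidity comes from the observation that the distribution constructed via the key lemma is determined by its restriction to the open orbit (its principal part is forced to be supported on the complement, in the spirit of \cite[Lemma 6.5]{CHM}), so that matching the hypothesis (3) functional on $HP$ propagates to vanishing on $\ker\Ind_{P}^{G}(\pi)$ throughout $G$. A secondary delicate point is checking that the character produced by the open-orbit Frobenius reduction really is $\Delta_{C\cap p_0Qp_0^{-1}}\Delta_H^{-1}$, so that the finite-dimensional case can be invoked unchanged.
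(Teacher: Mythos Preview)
Your treatment of the finite-dimensional alternative and of the non-Archimedean case matches the paper's. The gap is in your handling of the infinite-dimensional Archimedean case.

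Your plan there is to pass from $\sigma$ to a finite-dimensional $\tau$ via Casselman, apply the finite-dimensional case to obtain an $H$-invariant functional on $\Ind_Q^G(\tau)\cong\Ind_P^G(\Ind_Q^P(\tau))$, and then argue that this functional factors through $\Ind_P^G(\pi)$. The last step does not go through. The equivariant distribution produced by Lemma~\ref{lem:key} is the \emph{leading} Laurent coefficient at $\lambda=0$; if the family has a pole there, this coefficient is supported on the complement of the open orbit and carries no trace of your starting data. Even when there is no pole, the constant term is merely \emph{one} extension of the open-orbit measure, not the unique one, so the claim that the distribution is ``determined by its restriction to the open orbit'' is false. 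Moreover, the open orbit you actually control is $HQ'$, not $HP$: your functional on $\tau^{p_0}$ was obtained by \emph{restricting} the $\Ind_Q^P(\tau)$-functional to the open $C$-orbit in $P/Q$, thereby discarding its behaviour on the complement, and there is no mechanism forcing the reconstructed functional on $\Ind_P^G(\Ind_Q^P(\tau))$, even restricted to $HP$, to agree with the pull-back of the hypothesis~(3) functional. (A side point: Example~\ref{exm:grpCase} concerns Bruhat cells and does not furnish the equivariant polynomial for $G\setminus HQ'$; the relevant input is Corollary~\ref{cor:pQH}.)

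The paper avoids the factoring problem by reversing the order of operations. It keeps the family $\xi_\lambda\in\Sc^*(G,\sigma\otimes\Delta_P^{-1})$ intact and builds a $G\times P$-equivariant \emph{embedding}
$\Phi:\Sc^*(G,\sigma)\into\Sc^*(G\times P,V)$,
with $V$ the space of the finite-dimensional $Q$-representation $\rho$ from Theorem~\ref{thm:CasSubRep}, whose image is the annihilator of the closed subspace $\Sc(G)\hot\ker(T\circ I)$. The transported family $\eta_\lambda=\Phi(\xi_\lambda)$ is $(H\times P\times Q)$-equivariant; since $\mathbf{H\times P\times Q}$ has finitely many orbits on $\mathbf{G\times P}$ by hypothesis~\eqref{it:spher}, Lemma~\ref{lem:hol} makes $\eta_\lambda$ holonomic, and Theorem~\ref{thm:Ber} continues it. The orthogonality condition characterizing $\Im\Phi$ holds for $\re\lambda\gg 0$ and hence persists under meromorphic continuation, so the continued family stays in $\Im\Phi$ and therefore \emph{is} $\Phi$ of a meromorphic continuation of $\xi_\lambda$ itself. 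Taking the leading coefficient at $0$ then lands automatically in $\Sc^*(G,\sigma\otimes\Delta_P^{-1})^{H\times P}$. In short: rather than descending a functional through a quotient after the fact, the paper lifts the entire analytic-continuation problem through an injection whose image is cut out by closed linear conditions that are preserved under meromorphic continuation.
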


\begin{proof}
 In view of Lemma 2.26, we need to find a
non-zero element
\begin{equation*}
\xi \in {\mathcal{S}}^{\ast }(G,\sigma \otimes \Delta _{P}^{-1})^{H\times P}.
\end{equation*}%
By Lemma \ref{lem:Frob}, the open double coset $HP$ has a tempered $H\times P
$ equivariant $(\sigma \otimes \Delta _{P}^{-1})^{\ast }$-valued measure $%
\mu $. If $F$ is non-archimedean then the non-vanishing of ${\mathcal{S}}^{\ast }(G,\sigma \otimes \Delta _{P}^{-1})^{H\times P}$ follows from Theorem \ref{thm:padic}.

From now on we assume that $F$ is archimedean. By Corollary \ref{cor:ext}, $\mu f^{2n}$ has a natural extension $\xi
_{n}$ to $G$ for $n$ big enough. Multiplying this further by $|f|^{{\lambda }%
-2n}$ we obtain a family of distributions%
\begin{equation*}
\xi _{\lambda }\in {\mathcal{S}}^{\ast }(G,\sigma \otimes \Delta
_{P}^{-1})^{H\times P,1\otimes \psi ^{{\lambda }}}\text{ for }Re\left(
\lambda \right) \gg 0.
\end{equation*}%
If $\sigma $ is finite dimensional then $\xi _{\lambda }$ is holonomic and
admits a meromorphic continuation in $\lambda $, and we obtain $\xi $ by
considering the principal part at $\lambda =0$.

From now on we assume that $\sigma $ is
infinite-dimensional and $\bf H$ satisfies the finiteness condition in \eqref{it:spher}. In this case $\xi _{\lambda }$ is not necessarily holonomic
but we will show that it is \emph{still possible} to obtain a meromorphic
continuation and obtain $\xi $ as before.

For this we first note that by Theorem \ref{thm:CasSubRep} we have an
epimorphism
\begin{equation*}
T:Ind_{Q}^{P}\rho \twoheadrightarrow \sigma .
\end{equation*}%
where $Q\subset P$ is a minimal parabolic subgroup and $(\rho ,V)\in
\mathcal{C}^{\mathrm{fin}}(Q)$. We will now construct a monomorphism%
\begin{equation*}
\Phi :{\mathcal{S}}^{\ast }(G,\sigma )\hookrightarrow {\mathcal{S}}^{\ast
}(G\times P,V)
\end{equation*}
with suitable equivariance under $G\times P$, and satisfying

\begin{enumerate}
\item[(a)] The family $\eta _{\lambda }=\Phi \left( \xi _{\lambda }\right) $
is holonomic and hence admits a meromorphic continuation.

\item[(b)] The continued family is in the image of $\Phi $ and so determines
a meromorphic continuation of $\xi _{\lambda }$.
\end{enumerate}

To construct $\Phi $, we fix a left Haar measure $dq$ on $Q$ and consider
the integration operator%
\begin{equation}\label{=I}
I:{\mathcal{S}}(P,V)\twoheadrightarrow \Ind_{Q}^{P}(\rho ,V),\quad
I\left( a\right) \left( p\right) :=\int_{Q}\rho \left( q\right) a\left(
pq\right) dq
\end{equation}%
The map $I$ is a surjection by \cite[Remark 2.1.4]{dCl}, and hence so is $%
\phi =T\circ I$. Setting $\tau =Ker\left( \phi \right) $ we obtain a short
exact sequence of representations of $P,$ which we regard as $G\times P$%
-modules with trivial $G$-action%
\begin{equation}
0\rightarrow \tau \rightarrow {\mathcal{S}}(P,V)\overset{\phi }{%
\twoheadrightarrow }\sigma \rightarrow 0.  \label{=RepP}
\end{equation}%
Tensoring this with ${\mathcal{S}}(G)$, equipped with the \emph{usual}
action of $G\times P$, we get a  sequence of $G\times P$-modules, which is  a short exact sequence by Lemma \ref{lem:flat}.
\begin{equation}
0\rightarrow {\mathcal{S}}(G)\,\widehat{\otimes }\,\tau \rightarrow {%
\mathcal{S}}(G)\,\widehat{\otimes }\,{\mathcal{S}}(P,V)\overset{1\otimes
\phi }{\twoheadrightarrow }{\mathcal{S}}(G)\,\widehat{\otimes }\,\sigma
\rightarrow 0.  \label{=IndG}
\end{equation}
Now identifying ${\mathcal{S}}(G)\,\widehat{\otimes }\,{\mathcal{S}}(P,V)$
with ${\mathcal{S}}(G\times P,V)$ and dualizing (\ref{=IndG}), we get
\begin{equation}
0\rightarrow {\mathcal{S}}^{\ast }(G,\sigma )\overset{\Phi }{\rightarrow }{%
\mathcal{S}}^{\ast }(G\times P,V)\rightarrow {\mathcal{S}}^{\ast }(G,\tau
)\rightarrow 0.  \label{=IndGDual}
\end{equation}

The map $\Phi $ is $G\times P$-equivariant for the induced action, and to
complete the proof it remains to show that $\eta _{\lambda }=\Phi \left( \xi
_{\lambda }\right) $ satisfies (a) and (b).

For (a) we note that the $G\times P$-equivariance of $\Phi $ implies that $%
\eta _{\lambda }$ is $\left( H\times P,1\otimes \psi ^{{\lambda }}\right) $%
-equivariant. We claim that $\eta _{\lambda }$ is also $\left( Q,\Delta
_{Q}^{-1}\right) $-equivariant with respect to the action of $Q$
on ${\mathcal{S}}(G\times P,V)$ given by%
\begin{equation*}
\left( q\cdot a\right) (g,p)=\rho (q)a(g,pq).
\end{equation*}%
For this we argue as follows. By \eqref{=I}, for any $a\in {\mathcal{S}}(G\times P,V)$ we have $I(q\cdot a)=\Delta_Q(q)a$ and thus $\Delta
_{Q}a-q\cdot a\in \Ker(I)$. Since $\eta_{\lam}\in\Im(T\circ I)^*,$ $\eta_{\lam}$ vanishes on $\Ker(I)$ and thus
$$\langle q\cdot \eta_{\lam} - \Delta
_{Q}^{-1}(q)\eta_{\lam},a\rangle=\langle \eta_{\lam},\Delta
_{Q}(q)a-q\cdot a\rangle=0$$ for any $a\in  {\mathcal{S}}(G\times P,V)$. Thus $q\cdot \eta_{\lam} = \Delta
_{Q}^{-1}\eta_{\lam}$.

Thus $\eta _{\lambda }$ is $\left( H\times P\times Q,1\otimes \psi ^{{\lambda}}\otimes \Delta
_{Q}^{-1}\right) $-equivariant. Since by assumption $\mathbf{H\times
P\times Q}$ has finitely many orbits on $\mathbf{G\times P}$ and $\rho $ is
finite dimensional, by Lemma \ref{lem:hol} $\eta _{{\lambda }}$ is holonomic.

To prove (b) we note that by (\ref{=IndG}) and (\ref{=IndGDual}) the image
of $\Phi $ can be characterized as%
\begin{equation*}
\Im\left( \Phi \right) =({\mathcal{S}}(G)\,\widehat{\otimes }\,\tau )^{\bot
}\subset {\mathcal{S}}^{\ast }(G\times P,V)
\end{equation*}
Thus we first have%
\begin{equation*}
\left\langle \eta _{\lambda },{\mathcal{S}}(G)\,\widehat{\otimes }\,\tau
\right\rangle =0\text{ for }Re\left( \lambda \right) \gg 0
\end{equation*}%
and after meromorphic continuation, this holds for $\lam$ in an open dense subset of $\mathbb{C}$.
Consequently the continued family $\eta _{\lambda }$ lies in the image of $%
\Phi $. This proves (b) and completes the proof of the Proposition.
\end{proof}
}

\DimaK{Note that assumption \eqref{it:spher} is weaker than absolute sphericity and stronger than $F$-sphericity.
\begin{remark}
In fact,  assumption \eqref{it:spher} can be replaced by the assumption that $\bf H \cap P$ has finitely many orbits on the flag variety of minimal parabolic $F$-subgroups of $\bf P$.
Indeed, in this case
$\bf H\times P\times Q$ has finitely many orbits on $\bf HP \times P$, and thus $\eta_{\lam}|_{PH\times P}$ is holonomic. Since for $\re \lam$ big enough, $\eta_{\lam}$ is the natural extension of $\eta_{\lam}|_{PH\times P}$ obtained by Corollary \ref{cor:ext}, we get that $\eta_{\lam}$ is also holonomic, and thus has a meromorphic continuation. We can further drop the condition that $\bf P$ is a parabolic subgroup, but then we will have to discuss functionals on the Schwartz induction (see \cite[Definition 2.1.3]{dCl}).
\end{remark}

\begin{proof}[Proof of Proposition \ref{prop:ParAdapt}]

By Corollary \ref{cor:pQH}, there exists a non-zero $F$-polynomial $q$ on $\G$ such that $\bfH\mathbf{P_0}=\G_q$. Since $\bfH\mathbf{P_0}=\bfH\mathbf{P}$, Corollary \ref{cor:act} implies that $q$ changes under the action of the connected component $(\bfH \times \mathbf{P} )^0$ of $\bfH \times \mathbf{P}$ by some (algebraic) character $\psi$. Replacing $q$ by the product of its shifts by representatives of all the connected components of $\bf H\times P$, we obtain a polynomial $p'$ on $\G$ that changes by some  character under the action of  $\bf H\times P$, and such that its set of non-zeros is still $\bfH\mathbf{P}$ . \DimaJ{Let $E/F$ be a finite Galois extension such that $p'$ is defined over $E$.} Replacing $p'$ by the product of its shifts under the Galois group $\mathrm{Gal}(E/F)$ we obtain an $\bf H\times P$-equivariant $F$-polynomial $p$ with the same set of zeros. The proposition follows now from Proposition \ref{prop:ParSpher}.
\end{proof}


It is shown in \cite{BD} that if $\G$ is reductive and $\bf H$ is a symmetric subgroup, then any $\theta$-split parabolic subgroup satisfies the conditions of the proposition, where $\theta$ is the involution that defines $\bf H$. One can show that if $\bf H\subset G$ is a horospherical subgroup, $\bf B\subset G$ is a Borel subgroup and $\bf D\subset G/H$ is a $\bf B$-stable divisor then the stabilizer of $\bf D$ in $\bf G$ also satisfies the conditions. In a subsequent paper we are going to find sufficient conditions on $\bf P$ for general spherical subgroups $\bf H\subset G$.


}
\DimaF{
\begin{remark}
We require our groups to be Zariski connected for two reasons. First, this holds in all the applications we have in mind and second, this is required in the statements from \cite{KK} that we use in \S \ref{subsec:GrAct}. However, if we assume the subgroup $\bf Q$ in \S \ref{subsec:GrAct} to be solvable (rather than quasi-elementary) then one can prove the corresponding statements directly without the connectivity assumption. As a result, the same holds regarding Theorems \ref{thm:A} and \ref{thm:B}: if $Q$ is solvable then it does not need to be connected.
\end{remark}
}

\subsection{\DimaA{An open question}}

The next natural question that arises is whether one can extend distributions which are supported on an orbit but not defined on this orbit.
\begin{ques}\label{ques}
Let a quasi-elementary group $\bfQ$ act on a smooth affine variety $\X$ defined over $\R$. Let $\bf U \subset X $ be a (Zariski) open $\bfQ$-invariant subset.
Let $\chi$ be a character of $Q$. Let $O\subset  U$ be a closed $Q$-orbit and assume that there exists $\xi\in \Sc^*(U,\chi)^{Q}$ with $\Supp \xi = O$. Does this imply $\Sc^*(X,\chi)^Q\neq 0$?
\end{ques}

\DimaB{It seems that the solution of this question requires a new method. We were able, however, to solve the following special case.}

\begin{example}\label{exm:ExtDistQues}
The answer to Question \ref{ques} is yes if  $\bfQ=$ upper triangular $2\times 2$ invertible matrices, $\X=\A^2$.
\end{example}
\begin{proof}
The only case that does not follow from Theorem \ref{thm:A} is
${\bf U}=\A^2\setminus 0, \, O=\R^{\times}\times \{0\}$. Fix $\xi\in \Sc^*(U,\chi)^{Q}$ with $\Supp \xi = O$.
If $y\xi=0$ then $\xi$ is a measure on $O$ and we use  Theorem \ref{thm:A} again.
Assume $\xi y^2=0, \, \xi y \neq 0$. Note that $\fq:=Lie(Q)$ acts on $X=\R^2$ by the vector fields $\alp=x\partial_x,\, \beta=y\partial_y, \, \gamma=y\partial_x.$
Thus the coordinate $y$ and the vector field $\partial_x$ are $Q$-equivariant.
Since $\xi$ is equivariant $\xi\partial_y y$ is proportional   to $\xi$ and thus is $(Q,\chi)$-equivariant. Now, we build the family $\xi y x^{\lam}$  as in Lemma \ref{lem:key}, take the leading coefficient of the Laurent expansion at $\lam=0$ and apply $\partial_y$ to obtain a $(Q,\chi)$-equivariant distribution.
If $\xi$ has order $n$ along $Z$, we consider the family  $\xi y^{n-1} x^{\lam}$ and apply $\partial_y^{n-1}$.
\end{proof}


\section{Application to generalized Whittaker spaces}\label{sec:GenWhit}
Let $G$ be a \DimaD{Zariski connected} real reductive group, $\fg$ be its Lie algebra, and $\kappa$ be the Killing form on $\g$.  For any nilpotent element $e\in \fg$, one defines a nilpotent subalgebra $\fr:=\fr_e\subset \fg$ such that $\kappa(e,[\fr,\fr])=0$ (see e.g. \cite[\S 2.5]{GGS}).
Then $e$ defines a character $\chi$ of $R:=\Exp(\fr)$ by $$\chi(\Exp(y)):=\Dima{\Exp(i\kappa(e,y))}.$$ For any smooth representation $\pi$ of $G$, one defines the generalized Whittaker space $\cW_e(\pi)$ to be the space of $(R,\chi)$-equivariant continuous  functionals on $\pi$. While the definition of $\fr_e$ involves some choices, for any two different choices the spaces of functionals are canonically isomorphic.

Now let $P\subset G$ be a parabolic subgroup, $N\subset P$ be its unipotent radical and $\fp,\fn$ be the Lie algebras of $P$ and $N$. Let $V$ be finite-dimensional (complex) representation of $P$, and let $\Ind_P^G(V)$ denote the smooth induction. The following theorem generalizes Theorem \ref{thm:C}.

\begin{thm}
Let $e\in \fn$. Let $\fr\subset \fg$ be a nilpotent Lie algebra with $\kappa(e,[\fr,\fr])=0$. Let $R:=\Exp(\fr)$ and  define a character $\chi$ of $R$ by $\chi(\Exp(y))=\kappa(e,y)$.
Then there exists a non-zero $(R,\chi)$-equivariant \DimaD{continuous linear} functional on $\Ind_P^G(V)$.
\end{thm}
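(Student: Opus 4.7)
The plan is to combine the Frobenius reciprocity of Lemma \ref{lem:Integ} with Theorem \ref{thm:GenB}, reducing the theorem to constructing a single $R\times P$-equivariant tempered measure on a double coset.

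First, since $V$ is a finite-dimensional continuous representation of $P$ (hence smooth and of moderate growth, in particular an object of $\cC(P)$), Lemma \ref{lem:Integ} identifies the space of $(R,\chi)$-equivariant continuous linear functionals on $\Ind_P^G(V)$ with a specific equivariant subspace of $\mathcal{S}^*(G,V\otimes \Delta_P^{-1})$: the left $R$-action is equivariant by $\chi$ and the right $P$-action is the natural one. The group $R=\Exp(\mathfrak{r})$ is connected unipotent, hence quasi-elementary, so Theorem \ref{thm:GenB} applies with $Q=R$ and $H=P$: it suffices to exhibit one double coset $RgP\subset G$ carrying a non-zero tempered $R\times P$-equivariant $V^*$-valued measure.

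Second, I would take $g=1$, whose stabilizer $Z_1\subset R\times P$ is the diagonally embedded $R\cap P$. By the version of Frobenius reciprocity in Lemma \ref{lem:Frob}, the required measure exists if and only if $V^*$ carries a non-zero vector on which $R\cap P$ acts by a character built from $\chi|_{R\cap P}$, $\Delta_P|_{R\cap P}$, and the modular functions of the stabilizer and of $R\times P$. The key observation is that \emph{all} of these characters are trivial on $R\cap P$: connectedness and unipotency of $R\cap P$ force the modular-function factors to vanish, and for $y\in\mathfrak{r}\cap\mathfrak{p}$ the formula $\chi(\Exp(y))=\Exp(i\kappa(e,y))$ gives $1$ because $e\in\mathfrak{n}$ together with the standard root-space orthogonality of $\kappa$ yields $\kappa(\mathfrak{n},\mathfrak{l})=\kappa(\mathfrak{n},\mathfrak{n})=0$, so $\kappa(e,\mathfrak{p})=0$.

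Thus the required vector in $V^*$ is simply a non-zero $R\cap P$-fixed vector. Such a vector exists by Engel's theorem, since $\mathfrak{r}\cap\mathfrak{p}$ is a nilpotent Lie algebra acting on the finite-dimensional space $V^*$. Plugging this measure into Theorem \ref{thm:GenB} produces the desired non-zero element of the equivariant distribution space (the dimension lower-bound part of Theorem \ref{thm:GenB}, relying on Lemma \ref{lem:terms} applied to leading coefficients of the Laurent expansion built in Lemma \ref{lem:key}, delivers an honestly equivariant distribution, not merely a generalized one), and Lemma \ref{lem:Integ} translates it back to the desired generalized Whittaker functional.

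The main obstacle I anticipate is the verification that the measure on $RP$ produced by Frobenius reciprocity is actually \emph{tempered} as a distribution on $G$, which is the hypothesis needed to invoke Theorem \ref{thm:GenB}. The character $\chi$ is unitary by construction, and by Lemma \ref{lem:NilpTem}(i) the $N_P$-action on $V$ is unipotent, so pairing the $V^*$-valued measure with $N_P$-fixed vectors of $V$ yields scalar measures that are manifestly tempered; the general case should reduce to this via a finite-dimensional unipotent-action argument that preserves temperedness. Keeping careful track of the $\Delta_P$-twists and of the signs in the various Frobenius isomorphisms is the other piece of technical bookkeeping that will need attention.
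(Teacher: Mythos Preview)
Your overall strategy coincides with the paper's: reduce via Lemma~\ref{lem:Integ} to finding an element of $\Sc^*(G,\chi\otimes(V^*\otimes\Delta_P^{-1}))^{R\times P}$, apply Theorem~\ref{thm:GenB} (equivalently Theorem~\ref{thm:B}) to the double coset $RP$, and use Lemma~\ref{lem:Frob} to reduce to the existence of a suitable fixed vector in $V^*$ under $C:=R\cap P$. Your identification of the modular characters and of $\chi|_{C}$ as trivial is correct and matches the paper.

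The gap is in your claim that ``such a vector exists by Engel's theorem, since $\mathfrak{r}\cap\mathfrak{p}$ is a nilpotent Lie algebra acting on the finite-dimensional space $V^*$.'' Engel's theorem asserts that a Lie algebra of \emph{nilpotent operators} on a finite-dimensional space has a common null vector; it does \emph{not} assert that an abstractly nilpotent Lie algebra has a fixed vector in every finite-dimensional representation. A one-dimensional (abelian, hence nilpotent) Lie algebra acting by a non-zero scalar is the obvious counterexample. You therefore still owe the verification that each element of $\fr\cap\fp$ acts by a nilpotent operator on $V^*$. This does not follow merely from $C$ being a unipotent subgroup of $G$, since $V$ is only assumed to be a \emph{continuous} (not algebraic) representation of $P$: the additive group $\R$ is unipotent but can act on $\C$ by $t\mapsto e^{it}$.

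The paper supplies exactly this missing step: since $C$ is a unipotent subgroup of $P$, it is contained in the unipotent radical $N_0$ of some minimal parabolic $P_0\subset P$, and Lemma~\ref{lem:NilpTem}\eqref{it:nilp} applied to $P_0$ (with the restricted representation $V|_{P_0}$) shows that $N_0$, hence $C$, acts nilpotently on $V$ and on $V^*$. With this established, Engel's theorem does yield the required fixed vector. Your temperedness worry is handled in the paper by Lemma~\ref{lem:NilpTem}\eqref{it:tem}: $V$ has moderate growth, which is precisely the hypothesis under which Lemma~\ref{lem:Frob} applies and produces a tempered equivariant measure.
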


\begin{proof}
By Lemma \ref{lem:Integ} we have  $$((\Ind_P^G(V))^*)^{R,\chi}=\Sc^*(G,\chi\otimes (V\Dima{^*}\otimes\Delta_P^{-1}))^{R\times P}.$$
Let us show that the double coset $RP$  has a tempered $R\times P$-equivariant $\chi\otimes (V\Dima{^*}\otimes\Delta_P^{-1})$-valued measure. By Lemmas \ref{lem:Frob} and \ref{lem:NilpTem}\eqref{it:tem} one has to show that $(\chi\otimes \Delta_R) \otimes V\Dima{^*} \otimes\Delta^{-1}_{C}$ has a $C$-invariant vector, where $C:=R\cap P$ is diagonally embedded into $R\times P$ and $\Delta_C$ denotes the modular function of $C$. Since $R$ is unipotent, so is $C=R\cap P$ and thus $\Delta_R=\Delta_{C}=1$. Also, $C$ lies in the unipotent radical of a minimal parabolic subgroup $P_0\subset P$. Applying Lemma \ref{lem:NilpTem}\eqref{it:nilp} to $P_0$ we obtain that $C$ acts unipotently on $V\Dima{^*}$ and thus has an invariant vector. Now,
$e$ lies in the nilradical of $\fp$, thus is orthogonal to $\fp$ under the Killing form and thus $\chi|_{C}=1$.
Altogether, we get that $(\chi\otimes \Delta_R) \otimes V \Dima{^*}\otimes\Delta^{-1}_{C}$ has a $C$-invariant vector. Thus $RP$  has a tempered $R\times P$-equivariant $\chi\otimes (V\Dima{^*}\otimes\Delta_P^{-1})$-valued measure, and by Theorem \ref{thm:B} we have $$\Sc^*(G,\chi\otimes (V\Dima{^*}\otimes\Delta_P^{-1}))^{R\times P}\neq 0.$$
\end{proof}

\end{document}